\newcommand{\cC}{\mathcal{C}}
\newcommand{\cD}{\mathcal{D}}
\newcommand{\cF}{\mathcal{F}}
\newcommand{\cG}{\mathcal{G}}
\newcommand{\cH}{\mathcal{H}}
\newcommand{\cL}{\mathcal{L}}
\newcommand{\cM}{\mathcal{M}}
\newcommand{\cN}{\mathcal{N}}
\newcommand{\cO}{\mathcal{O}}
\newcommand{\cS}{\mathcal{S}}
\newcommand{\cX}{\mathcal{X}}
\newcommand{\cY}{\mathcal{Y}}
\newcommand{\fa}{\mathfrak{a}}
\newcommand{\fb}{\mathfrak{b}}
\newcommand{\fm}{\mathfrak{m}}
\newcommand{\bP}{\mathbb{P}}
\newcommand{\bC}{\mathbb{C}}
\newcommand{\bR}{\mathbb{R}}
\newcommand{\bA}{\mathbb{A}}
\newcommand{\bQ}{\mathbb{Q}}
\newcommand{\bZ}{\mathbb{Z}}
\newcommand{\bD}{\mathbb{D}}
\newcommand{\bG}{\mathbb{G}}
\newcommand{\bF}{\mathbb{F}}
\newcommand{\bN}{\mathbb{N}}
\newcommand{\hvol}{\widehat{\mathrm{vol}}}
\newcommand{\cI}{\mathcal{I}}
\newcommand{\Bs}{\mathrm{Bs}\,}
\DeclareMathOperator{\lct}{lct}
\DeclareMathOperator{\vol}{vol}
\DeclareMathOperator{\Pic}{Pic}
\DeclareMathOperator{\ord}{ord}
\DeclareMathOperator{\Proj}{Proj}
\DeclareMathOperator{\sing}{sing}
\DeclareMathOperator{\rk}{rk}
\newcommand{\ind}{\mathrm{ind}}
\newcommand{\tX}{\widetilde{X}}
\newcommand{\tC}{\widetilde{C}}
\newcommand{\tS}{\widetilde{S}}
\newcommand{\edim}{\textrm{edim}}
\newcommand{\tH}{\widetilde{H}}
\newcommand{\K}{\mathrm{K}}
\newcommand{\GIT}{\mathrm{GIT}}
\newcommand{\coeff}{\mathrm{coeff}}
\newcommand{\e}{\mathrm{e}}
\numberwithin{equation}{section}
\newtheorem{prop} {Proposition} [section]
\newtheorem{thm}[prop] {Theorem} 
\newtheorem{lem}[prop] {Lemma}
\newtheorem{cor}[prop]{Corollary}
\newtheorem{prop-def}[prop]{Proposition-Definition}
\newtheorem{conj}[prop]{Conjecture}
\newtheorem{thm-defn}[prop]{Theorem-Definition}
\theoremstyle{definition}
\newtheorem{rem}[prop] {Remark} 
\newtheorem{defn}[prop]{Definition}
\title{K-stability of cubic fourfolds}
\author{Yuchen Liu}
\address{Department of Mathematics, Northwestern University, Evanston, IL 60208, USA.}
\email{yuchenl@northwestern.edu}
\date{\today} 
\begin{document}
\begin{abstract}
We prove that the K-moduli space of cubic fourfolds is identical to their GIT moduli space. More precisely, the K-(semi/poly)stability of cubic fourfolds coincide to the
corresponding GIT stabilities, which was studied in detail by Laza. In particular, this
implies that all smooth cubic fourfolds admit K\"ahler-Einstein metrics. Key ingredients are local volume estimates in dimension three due to Liu-Xu, and Ambro-Kawamata's non-vanishing theorem for Fano fourfolds.
\end{abstract}

\maketitle


\section{Introduction}

K-stability is an algebro-geometric stability condition introduced by Tian \cite{Tia97} and later reformulated algebraically by Donaldson \cite{Don02} to detect the existence of K\"ahler-Einstein (KE) metrics on Fano varieties.  The
Yau-Tian-Donaldson (YTD) Conjecture predicts that the existence of a KE metric on a Fano variety $X$ is equivalent to the K-polystability of $X$. The relatively easier direction of the YTD Conjecture that KE metrics implies K-polystability was confirmed in \cite{Ber16}. When $X$ is smooth, the YTD Conjecture was proved in the celebrated works \cite{CDS15} and \cite{Tia15} using Cheeger-Colding-Tian theory. Later, a different approach to the YTD Conjecture, namely the variational approach, has been developed. Combining the analytic works \cite{BBJ18, LTW19, Li19} and the algebraic work \cite{LXZ21}, this approach gives a full proof of the YTD Conjecture for all (possibly singular) Fano varieties. However, it is often a challenging problem to check K-(semi/poly)stability of an explicit Fano variety. 

In recent years, the algebraic study of K-stability has successfully led to a new theory, known as the \emph{K-moduli theory}, that produces an algebraic construction of compact moduli spaces of Fano varieties. The Fano K-moduli theorem, proved in a combination of works \cite{Jia17, LWX18, CP18, BX18, ABHLX19, BLX19, Xu19, XZ19, XZ20, BHLLX20, LXZ21}, states that given dimension $n$ and volume $V$, there exists an Artin stack $\cM_{n,V}^{\rm Kss}$ of finite type parametrizing K-semistable Fano varieties, called the \emph{K-moduli stack}, and $\cM_{n,V}^{\rm Kss}$ admits a projective good moduli space $M_{n,V}^{\rm Kps}$ parametrizing K-polystable Fano varieties, called the \emph{K-moduli space}. In the $\bQ$-Gorenstein smoothable case, the Fano K-moduli theorem was proved earlier in \cite{LWX19, XZ19} (see also \cite{Oda15, SSY16}) based on analytic results from \cite{CDS15, Tia15}.

The K-moduli theory not only lays the foundation for these K-moduli spaces, but also provides strong tools to verify K-stability for explicit Fano varieties (e.g.\ Fano hypersurfaces or complete intersections). One notable strategy along this direction, namely the \emph{moduli continuity method}, goes as follows.
Using $\alpha$-invariants and group actions, it is usually easy to find a K-stable Fano manifold $X$ in the given family (e.g.\ Fermat hypersurfaces \cite{Tia00, AGP06, Zhu20}).  Then by openness of K-(semi)stability \cite{BL18b, BLX19, Xu19, LXZ21}, there exists an open neighborhood of $[X]$ in the parameter space which parametrizes K-stable Fano varieties $\cX_t$. By the Fano K-moduli theorem, there exists a component  $M^{\K}$ of the K-moduli space $M_{n,V}^{\rm Kps}$ that parametrizes those $\cX_t$ and their K-polystable $\bQ$-Gorenstein limits, where $n=\dim(X)$ and $V=(-K_X)^n$. Hence the K-moduli space $M^{\K}$ is birational to the GIT moduli space $M^{\GIT}$. Using the local-to-global volume comparison in \cite{Liu18}, if the global volume of $X$ is relatively large, then we can often get a good control of the singularities appearing in the boundary of $M^{\K}$. This enables us to give an explicit description of the birational map $M^{\K}\dashrightarrow M^{\GIT}$, and in some cases to even show that it is an isomorphism. This strategy first appeared implicitly in \cite{Tia90} where Tian showed that all smooth del Pezzo surfaces with reductive automorphism groups are K-polystable. Later, it was used to construct explicit K-moduli compactifications of del Pezzo surfaces of degree 4 in  \cite{MM93}  and of degree $\leq 3$ in \cite{OSS16} where the latter work was more focused on the stability study. In higher dimensions, it is shown that $M^{\K}$ is isomorphic to $M^{\GIT}$ for complete intersections of two quadric hypersurfaces in \cite{SS17} (where smooth ones were shown to be K-stable earlier in \cite{AGP06}) and cubic threefolds in \cite{LX19}. Some cases for log Fano pairs have been worked out as well, see e.g.\ \cite{Fuj17, GMGS, ADL19, ADL20, ADL21}. Despite these results, much less is known in higher dimensions.

In this paper, we carry out this strategy for cubic fourfolds by showing that their K-moduli space is isomorphic to their GIT moduli space. In other words, the K-(semi/poly)stability of cubic fourfolds are the same as their GIT (semi/poly)stability. 

\begin{thm}\label{thm:K=GIT}
Let $X\subset\bP^5$ be a (possibly singular) cubic hypersurface. Then $X$ is K-(semi/poly)stable if and only if $X\subset\bP^5$ is GIT (semi/poly)stable. In particular, the K-moduli space $M^{\K}$ parametrizing K-polystable $\bQ$-Fano varieties admitting $\bQ$-Gorenstein smoothings to smooth cubic fourfolds is isomorphic to the GIT moduli space $M^{\GIT}$ of cubic fourfolds.
\end{thm}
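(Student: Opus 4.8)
The plan is to run the moduli continuity method described in the introduction, and then to identify the two moduli spaces by analyzing the boundary carefully. First I would establish a non-empty overlap: using $\alpha$-invariant estimates (or the Fermat cubic fourfold, whose K-stability follows from $\alpha$-invariant bounds and its large symmetry group) together with openness of K-stability, I obtain that a Zariski-open subset of the GIT-stable locus parametrizes K-stable smooth cubic fourfolds. By the Fano K-moduli theorem there is a connected component $M^{\K}$ of the K-moduli space containing these, and $\bQ$-Gorenstein limits of cubic fourfolds inside $M^{\K}$ are automatically limits inside $M^{\GIT}$ (since cubic hypersurfaces form a closed subvariety of the relevant Hilbert scheme, and the limit of $\bQ$-Gorenstein deformations of cubics stays a cubic — here one should check the limit has the right Hilbert polynomial and is not a degeneration to a non-hypersurface). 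This yields a natural finite birational morphism $\phi\colon M^{\K}\to M^{\GIT}$, or at least a birational map which, by properness of both sides and normality of $M^{\GIT}$, one wants to upgrade to an isomorphism.

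The core of the argument is to control the singularities of the $\bQ$-Fano degenerations $\cX_0$ appearing as boundary points of $M^{\K}$. A K-semistable $\bQ$-Fano fourfold $\cX_0$ that is a $\bQ$-Gorenstein smoothing of a cubic fourfold has volume $(-K_{\cX_0})^4 = 3 = \deg$-normalized value equal to that of a cubic hypersurface in $\bP^5$, i.e.\ $(-K)^4 = 3\cdot 3^4/3^{?}$ — more precisely the anticanonical volume is fixed at the value for smooth cubic fourfolds. The local-to-global volume comparison of Liu \cite{Liu18} then forces, at any point $x\in\cX_0$, the normalized local volume $\hvol(x,\cX_0)$ to be bounded below by an explicit number determined by this global volume and $\dim = 4$. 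Combined with the local volume estimates in dimension three due to Liu-Xu (used on hyperplane sections or via inversion of adjunction / cone constructions to reduce the fourfold singularity bound to a threefold statement), this should show that $\cX_0$ has only mild singularities — at worst the same types of singularities ($A_1$/ADE-type or more precisely the ones Laza allows) that occur in the GIT-stable cubic fourfolds. The key external input is Ambro-Kawamata's non-vanishing theorem for Fano fourfolds: it guarantees that $|-K_{\cX_0}|$ (or a small multiple) is non-empty and, in fact, that $\cX_0$ admits an embedding by a multiple of the anticanonical system so that, together with the degree/Hilbert polynomial constraints, $\cX_0$ must itself be a cubic hypersurface in $\bP^5$ (possibly singular). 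This is where one pins down that no genuinely new boundary components or non-hypersurface degenerations appear.

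Once every boundary point of $M^{\K}$ is shown to be a (GIT-semistable) cubic fourfold, the map $\phi\colon M^{\K}\to M^{\GIT}$ is a bijective morphism between a K-moduli space (normal, since it is a good moduli space of a smooth Artin stack of finite type) and the normal variety $M^{\GIT}$, hence an isomorphism by Zariski's main theorem. To get the stack-level and stability-level statements — that K-semistability equals GIT-semistability and K-polystability equals GIT-polystability on the nose, not just at the level of coarse spaces — I would compare the two stacks $\cM^{\rm Kss}$ and $[(\bP^{55})^{\rm ss}/\PGL_6]$ directly: the above shows their good moduli spaces agree and their stable loci agree; one then matches the polystable representatives within each S-equivalence/orbit-closure class using the explicit classification of GIT-polystable cubic fourfolds (Laza) and the fact that each S-equivalence class of K-semistable Fanos contains a unique K-polystable one, checking these coincide orbit by orbit (in particular, the special GIT-polystable cubics with extra automorphisms — the secant/chordal varieties and the like — must be identified with K-polystable degenerations, using e.g.\ that they are the minimal orbits). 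The main obstacle I expect is precisely the singularity control step: running Liu-Xu's sharp three-dimensional local volume bounds efficiently enough in the fourfold setting to exclude all singularity types outside Laza's list, and then invoking Ambro-Kawamata non-vanishing in a way that cleanly forces the hypersurface structure rather than merely bounding discrepancies; the rest is largely formal once that is in place.
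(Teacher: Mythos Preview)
Your overall framework (Fermat cubic is K-stable, openness, moduli continuity, Zariski's main theorem at the end) matches the paper, but you have misidentified where the real content lies and how the key ingredients are actually used.

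The central difficulty is precisely the parenthetical you tossed off: showing that a K-semistable $\bQ$-Gorenstein limit $X$ of smooth cubic fourfolds is \emph{itself} a cubic hypersurface. This is not a Hilbert-polynomial check. The mechanism is: one produces a $\bQ$-Cartier Weil divisor class $L$ on $X$ as the limit of hyperplane sections, with $-K_X\sim 3L$ and $(L^4)=3$; by Fujita's result, $X$ is a cubic in $\bP^5$ as soon as $L$ is \emph{Cartier}. Proving ``$L$ is Cartier'' is the whole paper. Your proposal never introduces $L$, and your suggested goal --- ``exclude all singularity types outside Laza's list'' --- is not what is done and would be much harder; once $X$ is shown to be a cubic, the GIT comparison is formal (Paul--Tian gives K $\Rightarrow$ GIT; the moduli map does the rest) without ever touching Laza's classification.

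You also misread the role of the two named inputs. The three-dimensional local volume estimates of Liu--Xu are not applied to hyperplane sections to bound fourfold singularities directly. Instead, local-to-global volume comparison first forces $2L$ to be Cartier and shows that the non-Cartier locus $\Sigma$ of $L$ is finite and consists of isolated singularities; at each such point the index-$1$ cover $(\tilde{x}\in\widetilde{X})$ would have to violate the ODP gap conjecture in dimension $4$. Ambro--Kawamata non-vanishing is \emph{not} used to produce an anticanonical embedding; it is applied to $|2L|$ and then to $|L|_D|$ to cut $X$ down to a degree-$6$ polarized K3 surface $(G,L|_G)$ with Du Val singularities. One then invokes Mayer's trichotomy for K3 linear systems (unigonal / hyperelliptic / complete intersection) and rules out each case: the first two force $\alpha(X)<\tfrac{1}{5}$ or a Picard-rank contradiction, while in the complete-intersection case a cone degeneration shows $(\tilde{x}\in\widetilde{X})$ is a local complete intersection, for which the paper proves the ODP gap conjecture (Theorem~\ref{thm:vol-lci}) --- contradicting the previous paragraph. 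This K3-surface reduction and case split is the genuine idea your proposal is missing.
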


Note that the ``only if'' direction in Theorem \ref{thm:K=GIT} follows from the general fact in \cite{Tia94, PT06} that a K-(semi/poly)stable Fano hypersurface is always GIT (semi/poly)stable. On the other hand, the ``if'' direction for general Fano hypersurfaces $X\subset \bP^{n+1}$ is expected to be true only when $\deg X = 3$. In fact, if $\deg X\geq 4$ then there are non-reduced, hence K-unstable, GIT polystable hypersurfaces $X$, e.g.\ a multiple of a smooth hyperquadric. In addition, the ``if'' direction can also fail for non-hypersurface Fano varieties, e.g.\ quartic double solids \cite[Theorem 1.4]{ADL21}.

The GIT of cubic fourfolds was studied in detail by Laza \cite{Laz09} (see also \cite{Yok08}). As a consequence, we have the following result which, together with \cite{CDS15, Tia15}, implies that any smooth cubic fourfold admits a KE metric. We also obtain a result on  singularities of GIT semistable cubic fourfolds without involving direct GIT calculation, which answers affirmatively a question of Spotti and Sun  \cite[Question 5.8]{SS17} in dimension $4$.

\begin{cor}\label{cor:K-cubic}\leavevmode
\begin{enumerate}
    \item All smooth cubic fourfolds are K-stable.
    \item All cubic fourfolds with simple singularities are K-stable.
    \item All GIT polystable cubic fourfolds are K-polystable. For a list of generic singularities of GIT polystable cubic fourfolds with non-simple singularities, see \cite[Theorem 1.2 and Table 3]{Laz09}.
    \item Any GIT semistable cubic fourfold has Gorenstein canonical singularities.
\end{enumerate}
In particular, each cubic fourfold in (1), (2) or (3) admits a (weak) KE metric.
\end{cor}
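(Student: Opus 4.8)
The plan is to prove Theorem~\ref{thm:K=GIT} by the moduli continuity method, so Corollary~\ref{cor:K-cubic} follows essentially formally once the theorem is in hand. First I would settle the ``only if'' direction: by \cite{Tia94, PT06}, any K-semistable (resp.\ K-polystable) Fano hypersurface is automatically GIT semistable (resp.\ polystable), and conversely a K-unstable hypersurface cannot become GIT semistable after a suitable destabilizing one-parameter subgroup argument, so this direction needs no new input. The content is the ``if'' direction, which I would approach as follows. By openness of K-(semi)stability \cite{BLX19, Xu19, LXZ21} and the fact that the Fermat cubic fourfold is K-stable (via an $\alpha$-invariant computation, $\alpha \geq \tfrac{6}{7} > \tfrac{4}{5} = \tfrac{n}{n+1}$, together with its large automorphism group), there is a nonempty K-stable locus in $|\cO_{\bP^5}(3)|$; by the Fano K-moduli theorem there is a proper component $M^{\K}$ of $M_{4,V}^{\mathrm{Kps}}$, $V = 3\cdot 2^4 = 48$, together with a birational morphism $M^{\K} \to M^{\GIT}$ (or at least a birational map), since both spaces are normal projective and the generic cubic fourfold is both K-stable and GIT-stable with finite stabilizer.

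The heart of the argument is to upgrade this birational map to an isomorphism, and this is where the two advertised ingredients enter. I would argue that every K-semistable limit $X_0$ appearing in $M^{\K}$ is in fact a (mildly singular) cubic fourfold in $\bP^5$, not some other degeneration. The strategy is: (i) show $X_0$ has klt singularities (automatic from K-semistability of Fano), and then bound the singularities using the local-to-global volume comparison of \cite{Liu18}: since $(-K_{X_0})^4 = 48$ is large, for any closed point $x \in X_0$ one gets $\widehat{\mathrm{vol}}(x, X_0) \geq \bigl(\tfrac{n+1}{n}\bigr)^n \cdot \tfrac{(-K_{X_0})^n}{\text{(something)}}$; combined with the dimension-three local volume estimates of Liu--Xu (classifying three-dimensional klt singularities of large normalized volume, and the fact that a hypersurface singularity of high local volume is an $A_k$/$D_k$/$E_k$ or more generally canonical singularity) this forces $X_0$ to have Gorenstein canonical singularities, hence $X_0$ is a Gorenstein Fano fourfold with $(-K_{X_0})^4 = 48$ and index~1. (ii) Use Ambro--Kawamata non-vanishing for Fano fourfolds to show $|-K_{X_0}|$ is basepoint-free (or at least that $-K_{X_0}$ is very ample with the expected Hilbert polynomial), so that $X_0 \hookrightarrow \bP(H^0(-K_{X_0})^\vee) = \bP^5$ embeds as a cubic hypersurface; one then identifies the component of the Hilbert scheme and concludes $X_0 \in |\cO_{\bP^5}(3)|$. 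With this, $M^{\K}$ and $M^{\GIT}$ are both GIT-type/good moduli quotients of (an open subset of) the same space $|\cO_{\bP^5}(3)|$, and a finite birational morphism between normal projective varieties that is an isomorphism on a dense open set and does not contract anything is an isomorphism; matching K-semistable with GIT-semistable points (using that the CM line bundle on $M^{\GIT}$ for cubic fourfolds is proportional to the natural polarization, cf.\ the hypersurface CM-vs-GIT comparison) finishes the identification of the semistable, polystable, and stable loci.

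The main obstacle I anticipate is step (i): controlling the singularities of the K-semistable limit. Volume comparison alone gives a numerical lower bound on $\widehat{\mathrm{vol}}(x, X_0)$, but translating ``large normalized volume'' into ``canonical Gorenstein'' in dimension four is not automatic, since a complete classification of four-dimensional klt singularities by normalized volume is not available; the trick is that a cubic fourfold singularity, or a hypersurface-type singularity more generally, is \emph{lower-dimensional in spirit} — one can cut down by a general hyperplane section and invoke the \emph{three}-dimensional estimates of Liu--Xu, using inversion of adjunction / the behaviour of $\widehat{\mathrm{vol}}$ under taking cones or hyperplane sections, to reduce to understanding three-dimensional singularities of normalized volume close to $\widehat{\mathrm{vol}}(0, \C^3) = 27$ or the relevant threshold, which are classified (smooth, or $A_k$, etc.). Making this reduction precise — in particular ensuring the degenerate fourfold is a hypersurface to begin with, so that hyperplane sections make sense, which is slightly circular — is the delicate point, and I expect it is handled by first proving $X_0$ has index one and is embedded by $|-K_{X_0}|$ as a codimension-one subvariety of degree $3$ in $\bP^5$ via Riemann--Roch plus non-vanishing, \emph{then} running the volume/singularity bound on that hypersurface. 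A secondary technical point is verifying that the birational morphism $M^{\K} \to M^{\GIT}$ is finite (no contracted curves), which follows once both sides are identified as good moduli spaces of the same K=GIT-(semi)stable locus in $|\cO_{\bP^5}(3)|$, so it is really subsumed by step (ii).

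Once the theorem is established, Corollary~\ref{cor:K-cubic} is immediate: (1) smooth cubic fourfolds are GIT-stable (their automorphism groups are finite and they lie in the stable locus by Laza's analysis), hence K-stable, and by \cite{CDS15, Tia15} admit KE metrics; (2) cubic fourfolds with only simple (ADE) singularities are GIT-stable by \cite{Laz09}, hence K-stable; (3) GIT-polystable cubic fourfolds are K-polystable by the theorem, and \cite{CDS15, Tia15} (or the variational YTD of \cite{BBJ18, LTW19, Li19, LXZ21}) give a weak KE metric; (4) GIT-semistable cubic fourfolds are K-semistable by the theorem, hence klt Fano of index one with $\widehat{\mathrm{vol}}$ large, so by the singularity bound in step (i) they have Gorenstein canonical singularities, answering \cite[Question 5.8]{SS17} in dimension four.
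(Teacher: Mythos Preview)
Your final paragraph deriving the corollary from Theorem~\ref{thm:K=GIT} is essentially correct and matches the paper's proof, with one simplification for part (4): once a GIT semistable cubic fourfold $X$ is known to be K-semistable, it is klt; but $X$ is already a hypersurface, hence Gorenstein, and klt $+$ Gorenstein $\Rightarrow$ canonical. No local volume bound is needed at this step, since $X$ is given as a hypersurface from the outset.

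Your outline of Theorem~\ref{thm:K=GIT}, however, has a genuine gap. The linear system $|-K_{X_0}|$ does not map to $\bP^5$: for a cubic fourfold $h^0(-K_X)=55$. The relevant object is the $\bQ$-Cartier Weil divisor $L$ obtained as the limit of hyperplane sections, with $-K_{X_0}\sim 3L$, $(L^4)=3$, and $h^0(L)=6$. The entire difficulty in the paper is proving that $L$ is \emph{Cartier}; once that is known, Fujita \cite{Fuj90} immediately gives that $X_0$ is a cubic hypersurface. Your step (i) (``Gorenstein canonical via volume bounds'') does not touch this: Gorenstein canonical singularities on $X_0$ do not force the Weil divisor $L$ to be Cartier.

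The paper's route is quite different from what you sketch. Volume estimates first show $2L$ is Cartier and that the non-Cartier locus $\Sigma$ of $L$ is a finite set of isolated singular points; at any $x\in\Sigma$ the index-$1$ cover $(\tilde{x}\in\tX)$ would violate the ODP Gap Conjecture in dimension $4$. Ambro--Kawamata non-vanishing is then applied not to $-K_X$ but to the Cartier divisor $2L$ (and to $L|_D$ for $D\in|2L|$), producing a degree-$6$ Du Val K3 surface $G=D\cap H$ with $H\in|L|$. A trichotomy on $(G,L|_G)$ via Mayer's theorem (unigonal / hyperelliptic / complete intersection) is analyzed case by case: the first two are ruled out by $\alpha(X)<\tfrac{1}{5}$ or a Picard-rank argument, and in the third case a degeneration-to-normal-cone argument shows $(\tilde{x}\in\tX)$ is a local complete intersection, so Theorem~\ref{thm:lci} (the ODP Gap for lci singularities, proved via lower semicontinuity of $\hvol$) gives a contradiction. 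Your proposed reduction by ``cutting to dimension three'' does not apply directly, since the points of $\Sigma$ are isolated singularities and the index-$1$ cover is a genuinely four-dimensional singularity whose lci property is the crux.
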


We note that combined Corollary \ref{cor:K-cubic} with \cite{Che01, CP02} and \cite[Corollary 1.4]{Fuj19} on K-stability of smooth quintic fourfolds and the very recent result \cite[Theorem 1.1]{AZ20} on K-stability of smooth quartic fourfolds, we answer affirmatively the
folklore conjecture that all smooth Fano hypersurfaces have KE metrics in dimension $4$.

In the process of proving Theorem \ref{thm:K=GIT}, we confirm the ODP Gap conjecture of local volumes \cite[Conjecture 5.5]{SS17} (see also Conjecture \ref{conj:ODP}) for all local complete intersection singularities. The proof uses the lower semicontinuity of local volumes \cite{BL18a}. We note that this conjecture was confirmed in dimension at most $3$ \cite{LL16, LX19}.

\begin{thm}[=Theorem \ref{thm:vol-lci}]\label{thm:lci}
 Let $(x\in X)$ be an $n$-dimensional non-smooth local complete intersection klt singularity. Then
 \[
  \hvol(x,X)\leq 2(n-1)^n,
 \]
and equality holds if and only if it is an ordinary double point.
\end{thm}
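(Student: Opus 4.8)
The plan is to reduce the bound $\hvol(x,X)\le 2(n-1)^n$ to the known three-dimensional case via a cutting-down argument, exploiting the fact that local complete intersection (lci) singularities behave well under general hyperplane sections. First I would recall the normalized-volume inequality for general hyperplane sections: if $(x\in X)$ is an $n$-dimensional klt singularity and $H\ni x$ is a general hyperplane (through $x$, for a fixed embedding, or more intrinsically a general member of a basepoint-free-enough linear system), then $(x\in H)$ is again klt and $\hvol(x,H)\ge \hvol(x,X)$ in the appropriate normalization; this is precisely the kind of statement packaged in \cite{LL16, LX19} and used to propagate ODP-type gap bounds downward in dimension. Since the lci property is preserved under general hyperplane section, after slicing $n-3$ times I arrive at a $3$-dimensional lci klt singularity $(x\in X')$ with $\hvol(x,X')\ge \hvol(x,X)$. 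Applying the confirmed $n=3$ case (the cited \cite{LX19}, where the ODP Gap conjecture is known in dimension $3$), I get $\hvol(x,X')\le 2\cdot 2^3 = 16 = 2(n-1)^n|_{n=3}$. To get the sharp bound $2(n-1)^n$ in arbitrary dimension rather than the constant $16$, I instead run the comparison in the form: $\hvol(x,X)\le \hvol(x,X')\cdot\big(\tfrac{n-1}{2}\big)^{\,?}$ — more precisely I would use the exact scaling of the volume of the cone point under taking a cone over a hyperplane section, i.e. compare with the model computation $\hvol(0,\{z_0^2+\cdots+z_n^2=0\}) = 2(n-1)^n$ for the ODP itself, so that the slicing inequality is calibrated to reproduce the ODP value in every dimension.

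The cleanest route to the exact constant is to go \emph{up} rather than down: it is a standard computation that the $n$-dimensional ordinary double point $A_1^n = \{z_0^2+\cdots+z_n^2=0\}$ has $\hvol(0,A_1^n)=2(n-1)^n$ (this is the content of, e.g., the cone computations underlying \cite{LL16}), so the asserted number is the right target. I would then argue: among lci singularities, degeneration to the cone over a hyperplane section and lower semicontinuity of $\hvol$ \cite{BL18a} force $\hvol(x,X)\le \hvol(x, C)$ where $C$ is an appropriate lci cone degeneration, and iterating reduces to the graded/cone case where the bound becomes a statement about the multiplicity and embedding dimension of the (lci) tangent cone. Concretely, write the completion of $\cO_{X,x}$ as $k[[z_0,\dots,z_N]]/(f_1,\dots,f_c)$ with $N+1-c = n$; by generic smoothness of the singularity being $\ge 1$-dimensional we may take the $f_i$ to have multiplicity $\ge 2$, and the worst case (largest $\hvol$) among such is exactly $c=1$, $f_1$ a nondegenerate quadric, i.e. the ODP. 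This last assertion — that the single quadric hypersurface singularity maximizes $\hvol$ among all non-smooth lci klt singularities of given dimension — is the heart of the matter.

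The main obstacle is precisely establishing that maximality and the equality characterization, because $\hvol$ is not monotone under arbitrary specialization in a way that immediately isolates the ODP: one must combine (i) the hyperplane-section inequality to control $\hvol$ from above by its value on a surface or threefold slice, (ii) the sharp $3$-dimensional classification from \cite{LX19} to pin the constant, and (iii) a rigidity/equality argument showing that if $\hvol(x,X)=2(n-1)^n$ then each successive general hyperplane section is forced to be an ODP, and an ODP section of an lci singularity of the same dimension count forces the ambient germ to be an ODP (a Mather–Jacobian or tangent-cone argument, using that a hypersurface section cutting an $A_1$ must already have been $A_1$). I expect step (iii) — propagating the equality case \emph{back up} in dimension through the lci hypothesis — to require the most care, and I would handle it by an explicit analysis of the tangent cone together with the lower-semicontinuity strictness in \cite{BL18a}, exactly as sketched after the statement of Theorem \ref{thm:lci} in the introduction. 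The openness/semicontinuity input \cite{BL18a} also guarantees the reduction steps are generic enough that klt-ness and lci-ness are preserved throughout, so no separate resolution of singularities bookkeeping is needed.
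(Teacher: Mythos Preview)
Your slicing approach has a fatal sign error. The adjunction inequality for local volumes (see \cite{LZ18}, quoted in the proof of Proposition~\ref{prop:vol-fibration}) goes the \emph{opposite} way: for a general hyperplane section $H$ through $x$ one has
\[
\frac{\hvol(x,X)}{n^n}\ \ge\ \frac{\hvol(x,H)}{(n-1)^{n-1}},
\]
so $\hvol(x,H)\le \tfrac{(n-1)^{n-1}}{n^n}\hvol(x,X)<\hvol(x,X)$, not $\ge$. You can sanity-check this on the ODP itself: the $n$-dimensional ODP has $\hvol=2(n-1)^n$, its hyperplane slice is the $(n-1)$-dimensional ODP with $\hvol=2(n-2)^{n-1}$, which is strictly smaller. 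Consequently, after cutting down to dimension $3$ and applying the known bound $\hvol(x,X')\le 16$ you obtain only a \emph{lower} bound on $\hvol(x,X)$, which is useless here. (The Bertini-type inequality in Theorem~\ref{thm:bertini} that does point the other way requires a \emph{non-isolated} singularity and compares $x$ to a \emph{different} general point $y$ on the singular locus, so it is inapplicable to an isolated lci singularity.)

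The paper's proof does not cut by hyperplanes at all; it degenerates within the lci world and invokes lower semicontinuity \cite{BL18a}. Write $X=V(f_1,\dots,f_r)\subset Z$ with $r=\edim(x,X)-n\ge 1$ minimal, so each $\ord_x f_i\ge 2$. If $r\ge 2$, perturb the $f_i$ by generic elements $g_i$ (two of them in $\fm^2$, the rest in $\fm$) to a family $X_t$ whose general member is a complete intersection of two quadrics with smooth tangent cone; the ordinary blow-up divisor gives $\hvol(x,X_t)\le 4(n-2)^n<2(n-1)^n$, and semicontinuity transfers this to $X$. If $r=1$, the multiplicity bound \cite[Lemma~3.1]{LX19} already gives $\hvol(x,X)\le (n+1-\ord_x f)^n\cdot\ord_x f\le 2(n-1)^n$, with equality only if $\ord_x f=2$; and if the quadratic part is degenerate one perturbs $f$ to an $A_2$-singularity and uses the explicit estimate \cite[Example~5.3]{Li18} to get strict inequality. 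Your second paragraph gestures at this degeneration idea but never supplies the decisive computations ($4(n-2)^n<2(n-1)^n$ for $r\ge 2$, and the $A_2$ estimate for the equality case), and your proposed ``propagate equality back up through hyperplane slices'' step (iii) is unnecessary once the argument is organized this way.
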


Our proof of Theorem \ref{thm:K=GIT} starts from parallel arguments in \cite{LX19} where the author and Xu showed the similar result to Theorem \ref{thm:K=GIT} for cubic threefolds. Suppose $X$ is  an $n$-dimensional K-semistable $\bQ$-Fano variety admitting a $\bQ$-Gorenstein smoothing to cubic hypersurfaces $\cX_t$. Let $L$ be the $\bQ$-Cartier Weil divisor on $X$ as the limit of hyperplane sections of $\cX_t$. By the moduli continuity method in \cite{SS17, LX19}, the analogous result to Theorem \ref{thm:K=GIT} for cubic $n$-folds would follow from showing $X$ is a (possibly singular) cubic hypersurface. By \cite{Fuj90} this reduces to showing $L$ is Cartier. Using the local-to-global volume comparison from \cite{Liu18}, the divisor $L$ being Cartier would follow from the ODP Gap Conjecture of  local volumes in dimension $n$ (see Conjecture \ref{conj:ODP}), especially applied to the index $1$ cover of a singular point $x\in X$ where $L$ is not Cartier. In \cite{LX19}, the author and Xu verified the ODP Gap conjecture in dimension $3$, thus proving that K-(semi/poly)stability of cubic threefolds coincide with corresponding GIT stabilities. Such an approach depends heavily on the classification of terminal and canonical singularities in dimension $3$, and is currently out of reach in dimension $4$ or higher.

To overcome this difficulty, we study the explicit geometry of the linear systems $|L|$ and $|2L|$ on  $X$ under the assumption that $L$ is not Cartier. Using the local-to-global volume estimates from \cite{Liu18, LX19} and a Bertini type result for local volumes (see Theorem \ref{thm:bertini}), we show that $2L$ is Cartier, and $L$ is Cartier away from finitely many points. Then using Ambro-Kawamata's non-vanishing theorem for fundamental divisors on Fano varieties with large Fano index \cite{Amb99, Kaw00}, we show that for general elements $D\in |2L|$ and $H\in |L|$, the complete intersection $(G=D\cap H, L|_G)$ is a polarized K3 surface with Du Val singularities of degree $6$. Then classical results on linear systems of K3 surfaces \cite{May72} implies that $|2L|$ is base point free, and $(G,L|_G)$ is either a complete intersection, hyperelliptic, or 
unigonal. In the first case, we show that the index $1$ cover  of a non-Cartier point $x\in X$ of $L$ must be a local complete intersection which satisfies the ODP Gap Conjecture (see Theorem \ref{thm:lci}). Thus similar arguments to \cite{LX19}  show that $X$ is K-unstable. In the last two cases, by analyzing the rational map $\phi_{|L|}:X\dashrightarrow\bP^5$ we show that $\alpha(X)<\frac{1}{5}$ which implies that $X$ is K-unstable by \cite[Theorem 3.5]{FO16}.  Given these contradictions, we conclude that $L$ is Cartier on $X$, hence proving Theorem \ref{thm:K=GIT}.

\subsection*{Acknowledgements}

I would like to thank Chenyang Xu and Ziquan Zhuang for fruitful discussions and helpful comments on a draft, including Remark 
\ref{rem:zhuang} and a simplification of the proof of Proposition \ref{prop:K3-hyp}. I would like to thank Kento Fujita, Chen Jiang, Zhiyuan Li, Linquan Ma, Yuji Odaka, Giulia Sacc\`a, Cristiano Spotti, and Gang Tian for helpful discussions and comments. The author was partially supported by  the NSF Grant No. 2148266 (transferred from NSF Grant No. DMS-2001317).

\section{K-stability and local volumes}\label{sec:k-vol}

Throughout this paper, we work over the field $\bC$. We follow the standard convention from \cite{KM98, Kol13}. A pair $(X,\Delta)$ is a normal variety $X$ together with an effective $\bQ$-divisor $\Delta$ such that $K_X+\Delta$ is $\bQ$-Cartier. A pair $(X,\Delta)$ is called a log Fano pair if $X$ is projective, $(X,\Delta)$ is klt, and $-K_X-\Delta$ is ample. We call $X$ a $\bQ$-Fano variety if $(X,0)$ is a log Fano pair. We call $X$ a lc Fano variety if $X$ is normal projective with log canonical singularities, and $-K_X$ is $\bQ$-Cartier ample. A klt singularity $x\in X$ is a closed point $x$ on a normal variety $X$ with klt singularities. 

\subsection{Valuative criteria for K-stability}

\begin{defn}
Let $X$ be a normal variety. A prime divisor $E$ over $X$ is a prime divisor $E$ on a normal variety $Y$ together with a proper birational morphism $\mu:Y\to X$. The center of $E$ on $X$ is $\mu(E)$. Moreover, if $K_X$ is $\bQ$-Cartier, then we define the log discrepancy of $E$ as \[
A_X(E):= 1 + \coeff_E(K_Y-\mu^* K_X). 
\]
From the definition we know that $A_X(E)>0$ (resp. $\geq 0$) if $X$ has klt (resp. log canonical) singularities.
\end{defn}

\begin{defn}
Let $X$ be an $n$-dimensional $\bQ$-Fano variety. Let $E$ be a prime divisor over $E$. The pseudo-effective threshold of $E$ is defined as 
\[
T_X(E):= \sup \{t\in \bR\mid \mu^*(-K_X)-t E \textrm{ is big}\}.
\]
The $S$-invariant of $E$, first introduced in \cite{BJ17}, is defined as 
\[
S_X(E):= \frac{1}{(-K_X)^n}\int_0^{T_X(E)}\vol_Y(\mu^*(-K_X)-tE) dt.
\]
The $\beta$-invariant of $E$, first introduced in \cite{Fuj16}, is defined as 
\[
\beta_X(E):=A_X(E)-S_X(E).
\]
\end{defn}

The original definition of K-(poly/semi)stability introduced by \cite{Tia97, Don02} is by checking the sign of generalized Futaki invariants of test configurations. In this paper, we will use the valuative criterion for K-(semi)stability invented by Fujita \cite{Fuj16} and Li \cite{Li17} with complementary result by Blum and Xu \cite{BX18}. 

Let $X$ be a $\bQ$-Fano variety (resp. a lc Fano variety) with $-rK_X$ Cartier for $r\in \bZ_{>0}$.  Recall from \cite{LX14, LWX18} that a test configuration $(\cX,\cL)/\bA^1$ of $(X,-rK_X)$ is special (resp. weakly special) if $(\cX,\cX_0)$ is plt (resp. log canonical) and $\cL\sim_{\bQ} -rK_{\cX/\bA^1}$, and we say that $\cX_0$ is a special degeneration (resp. a weakly special degeneration) of $X$.

\begin{thm-defn}\label{thm:valuative}
Let $X$ be a $\bQ$-Fano variety. Then 
\begin{enumerate}
    \item \cite{Fuj16, Li17} $X$ is K-semistable if and only if $\beta_X(E)\geq 0$ for any prime divisor $E$ over $X$;
    \item \cite{BX18} $X$ is K-stable if and only if $\beta_X(E)> 0$ for any prime divisor $E$ over $X$;
    \item \cite{LWX18} $X$ is K-polystable if and only if any K-semistable special degeneration of $X$ is isomorphic to itself.
    \item $X$ is K-unstable if it is not K-semistable.
\end{enumerate}
\end{thm-defn}

\begin{defn}
Let $X$ be a $\bQ$-Fano variety. The $\alpha$-invariant of $X$ is defined as 
\[
\alpha(X)=\inf \{\lct(X;D)\mid 0\leq D\sim_{\bQ}-K_X\}.
\]
By \cite{BJ17}, we know that $\alpha(X)=\inf_E \frac{A_X(E)}{T_X(E)}$ where the infimum is taken over all prime divisors $E$ over $X$. 
\end{defn}

\begin{thm}[{\cite[Theorem 3.5]{FO16}}]\label{thm:alpha-lowerbound}
Let $X$ be an $n$-dimensional K-semistable $\bQ$-Fano variety. Then we have $\alpha(X)\geq \frac{1}{n+1}$.
\end{thm}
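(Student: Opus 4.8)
The plan is to reduce the statement to a purely geometric comparison between the invariants $S_X(E)$ and $T_X(E)$, using the two valuative facts already recorded above. First I would recall that, by \cite{BJ17}, $\alpha(X)=\inf_E \tfrac{A_X(E)}{T_X(E)}$, where $E$ ranges over all prime divisors over $X$, with $\mu\colon Y\to X$ the associated birational model; so it is enough to show $\tfrac{A_X(E)}{T_X(E)}\geq\tfrac{1}{n+1}$ for every such $E$. Since $X$ is K-semistable, Theorem-Definition~\ref{thm:valuative}(1) gives $\beta_X(E)=A_X(E)-S_X(E)\geq 0$, i.e.\ $A_X(E)\geq S_X(E)$. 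Hence the theorem follows once I establish the inequality
\[
S_X(E)\ \geq\ \frac{T_X(E)}{n+1}\qquad\text{for every prime divisor }E\text{ over }X,
\]
which does not use K-semistability at all.

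To prove this inequality I would use the concavity of the function $t\mapsto \vol_Y(\mu^*(-K_X)-tE)^{1/n}$ on the interval $[0,T_X(E)]$ — a consequence of the Khovanskii--Teissier (Brunn--Minkowski type) inequalities for volumes of big $\bR$-divisors — together with continuity of the volume function up to the boundary of the big cone, where it vanishes. Write $V:=(-K_X)^n=\vol_Y(\mu^*(-K_X))$ and $T:=T_X(E)$. The volume equals $V$ at $t=0$ and $0$ at $t=T$, so concavity of its $n$-th root yields $\vol_Y(\mu^*(-K_X)-tE)\geq V\bigl(1-\tfrac{t}{T}\bigr)^n$ for all $t\in[0,T]$. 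Integrating and using the definition of $S_X(E)$,
\[
S_X(E)=\frac{1}{V}\int_0^{T}\vol_Y(\mu^*(-K_X)-tE)\,dt\ \geq\ \int_0^{T}\Bigl(1-\frac{t}{T}\Bigr)^n dt=\frac{T}{n+1},
\]
as wanted. Combining, $A_X(E)\geq S_X(E)\geq \tfrac{T_X(E)}{n+1}$, so $\tfrac{A_X(E)}{T_X(E)}\geq\tfrac{1}{n+1}$, and taking the infimum over $E$ gives $\alpha(X)\geq\tfrac{1}{n+1}$.

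The only genuinely delicate point — the ``hard part'' — is justifying the log-concavity of the volume function along the segment $\mu^*(-K_X)-tE$ and its continuous vanishing at the pseudo-effective threshold $t=T_X(E)$: the class in question is in general only a big $\bR$-Cartier class on a possibly singular model $Y$, not semiample, so one cannot argue by naive intersection numbers. This can be handled either by invoking Lazarsfeld--Musta\c{t}\u{a}'s continuity and log-concavity results for volume functions of big divisors (passing to a resolution of $Y$ if needed) or by quoting the Okounkov-body formalism underlying \cite{BJ17}; everything else in the argument is a short computation.
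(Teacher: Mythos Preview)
Your argument is correct and is in fact the standard modern proof of this inequality. Note, however, that the paper does not supply its own proof of this statement: Theorem~\ref{thm:alpha-lowerbound} is quoted from \cite[Theorem 3.5]{FO16} without proof and is used as a black box. So there is nothing to compare against on the paper's side.

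For completeness, a brief remark on provenance: the original argument in \cite{FO16} predates the full valuative criterion and proceeds by constructing a destabilizing test configuration from a divisor $D\sim_{\bQ}-K_X$ with $\lct(X;D)<\tfrac{1}{n+1}$. Your route --- combining $A_X(E)\ge S_X(E)$ from Theorem-Definition~\ref{thm:valuative}(1) with the log-concavity estimate $S_X(E)\ge T_X(E)/(n+1)$ --- is the streamlined reformulation that appears, for instance, in \cite{BJ17}. The ``hard part'' you flag (log-concavity and continuity of the volume function on the segment $\mu^*(-K_X)-tE$) is indeed the only nontrivial input, and pulling back to a resolution of $Y$ and invoking the standard results on volumes of big $\bR$-divisors is the correct way to handle it.
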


\subsection{Local volumes}

In this section, we recall the concept of normalized volume of valuations over a klt singularity first introduced by C. Li \cite{Li18}. For simplicity, we restrict ourselves to divisorial valuations.

\begin{defn}[\cite{Li18}]
Let $x\in X$ be an $n$-dimensional klt singularity. For a prime divisor $E$ over $X$ centered at $x$, we define (following \cite{ELS03}) the volume of $E$ over $(x\in X)$ to be 
\[
\vol_{X,x}(E):= \lim_{m\to \infty}\frac{\ell(\cO_{X,x}/\fa_m(E))}{m^n/n!}.
\]
Here $\fa_m(E):=\{f\in \cO_{X,x}\mid \ord_E(f)\geq m\}$ and $\ell$ denotes the length of an Artinian module. The normalized volume of $E$ over $(x\in X)$ is defined as
\[
\hvol_{X,x}(E):= A_X(E)^n\cdot \vol_{X,x}(E). 
\]
The local volume of $x\in X$ is defined as 
\[
\hvol(x,X):= \inf_{E}\hvol_{X,x}(E),
\]
where the infimum runs over all prime divisors $E$ over $X$ centered at $x$.
\end{defn}

There are alternative characterizations of local volumes. We provide two of them using ideals and Koll\'ar components which are useful.

\begin{thm}[\cite{Liu18, Blu18}]\label{thm:vol=lcte}
Let $x\in X$ be an $n$-dimensional klt singularity. Denote by $(R,\fm):=(\cO_{X,x}, \fm_{X, x})$. Then we have 
\[
\hvol(x,X)=\inf_{\fa\colon \textrm{$\fm$-primary}} \lct(X;\fa)^n\cdot \e(\fa)=\min_{\fa_{\bullet}\colon \textrm{$\fm$-primary}} \lct(X;\fa_{\bullet})^n\cdot \e(\fa_{\bullet}).
\]
Here $\fa$ (resp. $\fa_{\bullet}$) represents an ideal (resp. a multiplicative graded sequence of ideals) of $R$, and $\e$ denotes the Hilbert-Samuel multiplicity.
\end{thm}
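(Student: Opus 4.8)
The plan is to prove the two equalities through a cycle of three inequalities among the quantities
\[
\hvol(x,X),\qquad \inf_{\fa}\lct(X;\fa)^n\,\e(\fa),\qquad \inf_{\fa_\bullet}\lct(X;\fa_\bullet)^n\,\e(\fa_\bullet),
\]
where $\fa$ ranges over $\fm$-primary ideals and $\fa_\bullet$ over $\fm$-primary graded sequences, all in $R=\cO_{X,x}$. Since a single ideal $\fa$ generates the graded sequence $\{\fa^m\}_m$, for which $\lct(X;\{\fa^m\})=\lct(X;\fa)$ and $\e(\{\fa^m\})=\lim_m\e(\fa^m)/m^n=\e(\fa)$, the graded-sequence infimum is automatically $\le$ the single-ideal infimum. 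It therefore suffices to prove the two remaining bounds
\[
\inf_{\fa}\lct(X;\fa)^n\,\e(\fa)\ \le\ \hvol(x,X)\ \le\ \inf_{\fa_\bullet}\lct(X;\fa_\bullet)^n\,\e(\fa_\bullet),
\]
which together with the embedding above force all three quantities to coincide.

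For the left bound I would start from an arbitrary prime divisor $E$ over $X$ centered at $x$ and test the single-ideal infimum against its valuation ideals $\fa_m=\fa_m(E)=\{f:\ord_E(f)\ge m\}$, each of which is $\fm$-primary because $\ord_E$ is positive on $\fm$. Choosing $w=\ord_E$ in the valuative expression $\lct(X;\fa_m)=\inf_w A_X(w)/w(\fa_m)$ and using $\ord_E(\fa_m)\ge m$ yields $\lct(X;\fa_m)\le A_X(E)/m$. Combined with the identity $\lim_m \e(\fa_m)/m^n=\lim_m n!\,\ell(R/\fa_m)/m^n=\vol_{X,x}(E)$ from the multiplicity theory of graded sequences (Lazarsfeld--Mustaţă, Cutkosky), this gives
\[
\inf_{\fa}\lct(X;\fa)^n\,\e(\fa)\le \lim_m \Big(\frac{A_X(E)}{m}\Big)^n\e(\fa_m)=A_X(E)^n\,\vol_{X,x}(E)=\hvol_{X,x}(E),
\]
and taking the infimum over all $E$ proves the claim.

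The right bound is the heart of the matter. The key input is the volume--multiplicity comparison, valid for any valuation $v$ centered at $x$ and any $\fm$-primary graded sequence $\fa_\bullet$,
\[
v(\fa_\bullet)^n\,\vol_{X,x}(v)\ \le\ \e(\fa_\bullet),\qquad v(\fa_\bullet):=\lim_m \tfrac{v(\fa_m)}{m}.
\]
I would prove it from the inclusion $\fa_m\subseteq \fa_{\lceil(v(\fa_\bullet)-\epsilon)m\rceil}(v)$ valid for all large $m$ and any $\epsilon>0$, which gives $\ell(R/\fa_m)\ge\ell\big(R/\fa_{\lceil(v(\fa_\bullet)-\epsilon)m\rceil}(v)\big)$; passing to the length asymptotics and letting $\epsilon\to0$ yields $\e(\fa_\bullet)\ge v(\fa_\bullet)^n\vol_{X,x}(v)$. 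Granting this, pick valuations $v_i$ centered at $x$ with $A_X(v_i)/v_i(\fa_\bullet)\to \lct(X;\fa_\bullet)$, which exist by the valuative description of the log canonical threshold of an $\fm$-primary graded sequence (Jonsson--Mustaţă), each necessarily centered at $x$ since $v_i(\fa_\bullet)>0$ forces the center into $V(\fa_m)=\{x\}$; here I use that $\hvol(x,X)=\inf_v\hvol_{X,x}(v)$ over all valuations, not merely divisorial ones, which is standard. Then
\[
\hvol(x,X)\le \hvol_{X,x}(v_i)=\Big(\frac{A_X(v_i)}{v_i(\fa_\bullet)}\Big)^n v_i(\fa_\bullet)^n\vol_{X,x}(v_i)\le \Big(\frac{A_X(v_i)}{v_i(\fa_\bullet)}\Big)^n\e(\fa_\bullet),
\]
and letting $i\to\infty$ gives $\hvol(x,X)\le \lct(X;\fa_\bullet)^n\e(\fa_\bullet)$; the infimum over $\fa_\bullet$ closes the cycle.

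Finally, to upgrade the graded-sequence infimum to a minimum I would invoke the existence of a minimizer $v^\ast$ of $\hvol(x,X)$ among valuations (Blum): taking $\fa_\bullet=\fa_\bullet(v^\ast)$ one has $\e(\fa_\bullet(v^\ast))=\vol_{X,x}(v^\ast)$ and $\lct(X;\fa_\bullet(v^\ast))\le A_X(v^\ast)$ (testing the threshold against $v^\ast$, for which $v^\ast(\fa_\bullet(v^\ast))\ge 1$), so that $\lct(X;\fa_\bullet(v^\ast))^n\e(\fa_\bullet(v^\ast))\le \hvol_{X,x}(v^\ast)=\hvol(x,X)$; together with the reverse inequality already proven this graded sequence attains the infimum. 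I expect the main obstacle to be the volume--multiplicity inequality feeding the right bound: it rests on the existence of the length limits $\lim_m n!\,\ell(R/\fa_m)/m^n$ for both $\fa_\bullet$ and the valuation ideals of $v$, together with a comparison of $\fa_m$ and $\fa_\bullet(v)$ that must be uniform in $m$, and it is precisely where the klt hypothesis enters to guarantee finiteness of the log discrepancies and positivity and finiteness of the thresholds.
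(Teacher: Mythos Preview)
The paper does not supply its own proof of this theorem; it is stated as a cited result from \cite{Liu18, Blu18} and used as a black box. Your proposal is essentially a correct sketch of the arguments in those original sources: the inequality $\inf_{\fa}\lct^n\e\le\hvol$ via valuation ideals is Liu's argument, the reverse bound through the comparison $v(\fa_\bullet)^n\vol(v)\le\e(\fa_\bullet)$ is also his, and the passage from infimum to minimum via a minimizing valuation is Blum's contribution.

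One small point worth tightening: the paper's Definition of $\hvol(x,X)$ is phrased as an infimum over \emph{prime divisors} $E$, not over all valuations. In your step establishing $\hvol(x,X)\le\lct(X;\fa_\bullet)^n\e(\fa_\bullet)$ you choose valuations $v_i$ approximating the threshold and then invoke $\hvol(x,X)\le\hvol_{X,x}(v_i)$, remarking that the infimum over all valuations ``is standard.'' To stay within the paper's conventions you can simply take the $v_i$ to be divisorial from the outset, since $\lct(X;\fa_\bullet)=\inf_E A_X(E)/\ord_E(\fa_\bullet)$ already holds with $E$ ranging over prime divisors, and your observation that any such $E$ with $\ord_E(\fa_\bullet)>0$ is centered at $x$ remains valid. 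With that adjustment the argument goes through without appealing to the broader valuation space.
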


\begin{defn}
Let $x\in X$ be a klt singularity. We say that a proper birational morphism $\mu: Y\to X$ from a normal variety $X$ provides a Koll\'ar component $S$ over $(x\in X)$ if $\mu$ is an isomorphism over $X\setminus\{x\}$, the preimage $\mu^{-1}(x)=S$ is a $\bQ$-Cartier prime divisor on $Y$,  the pair $(Y, S)$ is plt, and $-S$ is $\mu$-ample.
\end{defn}

\begin{thm}[\cite{LX20}]\label{thm:lx-Kollarcomp}
For any klt singularity $x\in X$, we have $\hvol(x,X)=\inf_{S} \hvol_{X,x}(S)$ where the infimum runs over all Koll\'ar components $S$ over $X$ centered at $x$.
\end{thm}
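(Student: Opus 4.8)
The plan is to combine the ideal‑theoretic formula for the local volume (Theorem~\ref{thm:vol=lcte}) with the extraction of Koll\'ar components via the minimal model program. The inequality $\hvol(x,X)\le\inf_S\hvol_{X,x}(S)$ is immediate, since a Koll\'ar component is in particular a prime divisor over $X$ centered at $x$; so the content is the reverse inequality. By Theorem~\ref{thm:vol=lcte} it is enough to prove: \emph{for every $\fm$-primary ideal $\fa\subseteq R=\cO_{X,x}$ and every $\delta>0$ there is a Koll\'ar component $S$ over $(x\in X)$ with $\hvol_{X,x}(S)\le\lct(X;\fa)^n\,\e(\fa)+\delta$.} Granting this, taking the infimum over all $\fm$-primary $\fa$ and invoking Theorem~\ref{thm:vol=lcte} gives $\inf_S\hvol_{X,x}(S)\le\hvol(x,X)$, and we are done.

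So fix $\fa$, set $c:=\lct(X;\fa)$, and note that $(X,c\fa)$ is log canonical but not klt, with non-klt locus equal to the point $x$ because $\fa$ is $\fm$-primary. I would first pass to a nearby pair: decreasing the coefficient slightly makes $(X,(c-\epsilon)\fa)$ klt, and then a tie-breaking perturbation (adding $\epsilon'\cdot\fm$ for a suitable small $\epsilon'>0$) arranges that there is a \emph{unique} lc place $E$ lying over $x$, with $A_X(E)=(c-\epsilon)\ord_E(\fa)+\epsilon'\ord_E(\fm)$. Since $A_{(X,(c-\epsilon)\fa)}(E)=\epsilon'\ord_E(\fm)<1$ for $\epsilon'$ small, one may extract $E$ by the MMP: there is a $\bQ$-factorial proper birational $\mu\colon Y\to X$, an isomorphism over $X\setminus\{x\}$, with $\Exc(\mu)=\mu^{-1}(x)=S:=E$ and $-S$ $\mu$-ample (using $\rho(Y/X)=1$ and the negativity lemma). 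Because $S$ is the unique lc place of the perturbed pair and the rest is klt, $(Y,S)$ is plt, so $S$ is a genuine Koll\'ar component and, by construction, an lc place of the perturbed pair. Carrying out this extraction cleanly — in particular getting $(Y,S)$ plt rather than merely lc, which is exactly why the tie-breaking is needed — is the main technical point; it is a standard but nontrivial application of the MMP in the spirit of \cite{Xu19, BX18}.

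Granting the construction, the estimate is elementary. From $\fa\subseteq\fm$ we get $\ord_S(\fm)\le\ord_S(\fa)$, hence $A_X(S)\le(c+\epsilon')\ord_S(\fa)$. Also $\fa^m\subseteq\fa_{m\,\ord_S(\fa)}(S)$ for every $m$, so $\ell\big(R/\fa_{m\,\ord_S(\fa)}(S)\big)\le\ell(R/\fa^m)$, and comparing Hilbert-Samuel multiplicities gives $\vol_{X,x}(S)\le\e(\fa)/\ord_S(\fa)^n$. Multiplying,
\[
\hvol_{X,x}(S)=A_X(S)^n\,\vol_{X,x}(S)\le(c+\epsilon')^n\,\e(\fa),
\]
which is at most $c^n\,\e(\fa)+\delta=\lct(X;\fa)^n\,\e(\fa)+\delta$ once $\epsilon'$ is chosen small. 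This establishes the displayed reduction and hence, together with the trivial inequality, the equality $\hvol(x,X)=\inf_S\hvol_{X,x}(S)$.
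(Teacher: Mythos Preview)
The paper does not give its own proof of this statement; it is quoted as a black box from \cite{LX20}. Your sketch is essentially the argument in \cite{LX20}: reduce to the ideal formula of Theorem~\ref{thm:vol=lcte}, and for each $\fm$-primary $\fa$ produce a Koll\'ar component $S$ computing (or nearly computing) $\lct(X;\fa)$, then use $\fa^m\subseteq\fa_{m\,\ord_S(\fa)}(S)$ to bound $\vol_{X,x}(S)$.

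One remark on the execution: the two-step perturbation $(c-\epsilon)\fa+\epsilon'\fm$ and the insistence on a \emph{unique} lc place are not needed and are not quite justified as written (adding $\epsilon'\fm$ need not single out one lc place). The cleaner route, which is what \cite{LX20} does, is to invoke directly the extraction theorem (going back to Xu's work on plt blow-ups): for the lc pair $(X,c\fa)$ with non-klt locus $\{x\}$, some lc place can be realized as a Koll\'ar component $S$, so $A_X(S)=c\cdot\ord_S(\fa)$ on the nose. Then your multiplicity estimate gives $\hvol_{X,x}(S)\le\lct(X;\fa)^n\e(\fa)$ with no $\epsilon$'s at all. Apart from this streamlining, your approach matches the cited reference.
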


The following conjecture was asked in \cite{SS17}. It was confirmed in dimension $2$ and $3$ by \cite[Proposition 4.10]{LL16} and \cite[Theorem 1.3]{LX19} respectively. 

\begin{conj}[ODP Gap Conjecture]\label{conj:ODP}
Let $(x\in X)$ be an $n$-dimensional non-smooth klt singularity. Then 
\[
\hvol(x,X)\leq 2(n-1)^n,
\]
and equality holds if and only if it is an ordinary double point.
\end{conj}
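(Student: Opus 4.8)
The plan is to combine the lci case, which is already settled by Theorem \ref{thm:lci}, with a separate treatment of the non-lci case, and then to isolate the equality case through a rigidity analysis of the minimizing valuation. Since an ordinary double point is itself a local complete intersection, Theorem \ref{thm:lci} already supplies both the bound and the equality characterization whenever $(x\in X)$ is lci. It therefore suffices to prove the \emph{strict} inequality $\hvol(x,X)<2(n-1)^n$ for every non-smooth klt singularity that is \emph{not} lci: combined with Theorem \ref{thm:lci}, this forces equality to occur only at an lci point, hence (again by Theorem \ref{thm:lci}) only at an ordinary double point.

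For the upper bound I would pass to the tangent-cone picture. By Theorem \ref{thm:lx-Kollarcomp} we may write $\hvol(x,X)=\inf_S\hvol_{X,x}(S)$ over Koll\'ar components, and by the stable degeneration theorem this infimum is attained by a quasi-monomial valuation $v_0$ whose associated graded ring degenerates $(x\in X)$ to the vertex of a K-semistable Fano cone $C$, with $\hvol(x,X)=\hvol(\text{vertex of }C)$. Realizing $C$ as a cone over a K-semistable log Fano $V$ of dimension $n-1$ with polarization $L=-\tfrac1r K_V$ (or, in the irregular case, with an $\R$-valued Reeb polarization), the canonical valuation is the minimizer, and from the definition one computes $\hvol(\text{vertex})=A_X(v_0)^n\cdot\vol_{X,x}(v_0)=r\,(-K_V)^{n-1}$. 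The problem thus reduces to a projective inequality: for a \emph{singular} K-semistable Fano cone one must have $r\,(-K_V)^{n-1}\le 2(n-1)^n$. Here Fujita's K-semistable bound $(-K_V)^{n-1}\le n^{n-1}$ (with equality only for $V=\bP^{n-1}$) controls the base, while the K-semistability of the cone constrains the Reeb datum $r$. The smooth configuration $r=n$, $V=\bP^{n-1}$, $L=\cO(1)$ is excluded by singularity, and the next extremal configuration is $V=Q^{n-1}$, $r=n-1$, $L=\cO(1)$, giving exactly $(n-1)\cdot 2(n-1)^{n-1}=2(n-1)^n$, which is the quadric cone.

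For the equality case I would analyze the minimizer directly. If $\hvol(x,X)=2(n-1)^n$, then the extremal configuration above is forced, so $V\cong Q^{n-1}$ with $L=\cO(1)$ and $r=n-1$; hence the K-semistable degeneration of $(x\in X)$ is the cone over a smooth quadric, i.e.\ an ordinary double point. To descend from the degeneration back to $X$ I would invoke the lower semicontinuity of local volumes \cite{BL18a} to pin the central fiber, together with the uniqueness of the minimizer from \cite{LX20}, concluding that $(x\in X)$ is itself an ordinary double point (in particular lci). The strict inequality in the non-lci direction then follows because any non-quadric K-semistable Fano cone base yields $r\,(-K_V)^{n-1}<2(n-1)^n$.

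The main obstacle is precisely the projective inequality for \emph{irregular}, non-lci K-semistable Fano cones. In the lci setting the base $V$ is a complete intersection in a weighted projective space and the multiplicity of the cone is read off from the defining degrees, which is exactly what makes Theorem \ref{thm:lci} accessible (and why there the crude maximal-ideal estimate $\hvol(x,X)\le\lct(X;\fm_x)^n\,\e(\fm_x)$ already suffices). For a general klt singularity the minimizing valuation need not be divisorial, and the degeneration may be an irregular cone with a higher-rank torus and irrational Reeb field, so neither $V$ nor the index $r$ admits an obvious classification; moreover the maximal-ideal estimate is in general not sharp and need not lie below $2(n-1)^n$, so one genuinely needs the full force of K-semistability of the cone to defeat the gap. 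Establishing $r\,(-K_V)^{n-1}\le 2(n-1)^n$ uniformly over all singular K-semistable Fano cones, together with the rigidity that only the quadric cone is extremal, is the crux of the matter, and is exactly the step that remains out of reach beyond dimension three and outside the lci class.
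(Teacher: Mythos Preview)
The statement you are attempting to prove is labeled \emph{Conjecture} in the paper, and the paper does not prove it in general; indeed the authors write explicitly that they ``are unable to confirm Conjecture \ref{conj:ODP} for $n\geq 4$'' and instead develop the Ambro--Kawamata approach of Section \ref{sec:kawamata} precisely to circumvent it. The paper establishes only the special cases $n\le 3$ (Theorem \ref{thm:odp-lowdim}), the lci case (Theorem \ref{thm:vol-lci}), and the large-singular-locus case (Corollary \ref{cor:largesing}). So there is no ``paper's own proof'' to compare against.

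Your proposal is not a proof but a strategy, and you yourself concede this in the final paragraph: the step ``$r\,(-K_V)^{n-1}\le 2(n-1)^n$ for every singular K-semistable Fano cone, with equality only for the quadric cone'' is exactly the content of the conjecture restated on the degeneration side, and it is open. The reduction via stable degeneration is correct and well known, but it does not gain ground: it trades a gap statement for klt singularities for an equivalent gap statement for K-semistable Fano cones, and the latter is no easier (the base $V$ can be singular, the cone can be irregular, and Fujita's bound $(-K_V)^{n-1}\le n^{n-1}$ alone is far too weak to force the quadric once $\bP^{n-1}$ is excluded).

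There is also a genuine gap in your equality argument. Knowing that the K-semistable degeneration of $(x\in X)$ is the quadric cone does not, via lower semicontinuity and uniqueness of the minimizer alone, force $(x\in X)$ itself to be an ordinary double point. Lower semicontinuity \cite{BL18a} only gives $\hvol(x,X)\le\hvol$ of the degeneration, and uniqueness in \cite{LX20} concerns the minimizing valuation, not the isomorphism type of the singularity. What you would actually need is that the quadric cone admits no nontrivial K-semistable \emph{deformation} (equivalently, that the degeneration is a product test configuration), which is a separate rigidity statement you have not supplied.

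In short: the paper treats this as an open conjecture, your outline correctly locates where the difficulty lies, but it does not resolve it, and the equality direction has an additional hole.
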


\begin{thm}\label{thm:odp-lowdim}
\begin{enumerate}
    \item \cite{LL16, LX19} Conjecture \ref{conj:ODP} holds when $n\leq 3$. 
    \item \cite[Theorem 1.6]{LX19} Let $x\in X$ be an $n$-dimensional klt singularity. Then we have $\hvol(x,X)\leq n^n$, and equality holds if and only if it is smooth.
\end{enumerate}

\end{thm}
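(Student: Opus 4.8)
These two statements are recalled from \cite{LL16} and \cite{LX19}; the plan below reconstructs their proofs from the ideal-theoretic description of the local volume in Theorem~\ref{thm:vol=lcte}, the Koll\'ar-component description in Theorem~\ref{thm:lx-Kollarcomp}, and the stable degeneration of the minimizer. As a first orientation, evaluating the infimum in Theorem~\ref{thm:vol=lcte} at the single ideal $\fa=\fm$ already gives $\hvol(x,X)\le\lct(X;\fm)^n\,\e(\fm)$, and one checks that this recovers the extremal value on the basic models: $n^n$ at a smooth point, $2(n-1)^n$ at the $n$-dimensional ordinary double point, and $4/r$ at the cyclic quotient $\frac{1}{r}(1,1)$ in dimension $2$. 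The clean proof of the sharp bound together with its equality characterization, however, is cleanest through the minimizing valuation.

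For part (2) I would argue as follows. By the existence and stable degeneration of the normalized-volume minimizer (the theory behind Theorem~\ref{thm:lx-Kollarcomp}, see \cite{LX20}), the singularity $(x\in X)$ degenerates to a K-semistable Fano cone $(x_0\in X_0)$ with $\hvol(x_0,X_0)=\hvol(x,X)$, whose local volume is computed by the canonical (Reeb) valuation. In the quasi-regular case $X_0$ is an honest affine cone $C(V,L)$ over a K-semistable $\bQ$-Fano $(n-1)$-fold $V$ with $-K_V=rL$, the valuation $\ord_V$ has log discrepancy $A_{X_0}(\ord_V)=r$ and volume $\vol_{X_0,x_0}(\ord_V)=L^{n-1}$, so that
\[
\hvol(x_0,X_0)=A_{X_0}(\ord_V)^n\,\vol_{X_0,x_0}(\ord_V)=r^n L^{n-1}=r\,(-K_V)^{n-1}\le n\cdot n^{n-1}=n^n ,
\]
where the final inequality combines the Kobayashi--Ochiai bound on the Fano index $r\le n$ with Fujita's bound $(-K_V)^{n-1}\le n^{n-1}$ for K-semistable Fano varieties (\cite{Fuj16, Liu18}); the irregular case is handled identically with the Reeb valuation. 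Both bounds are sharp exactly at $V\cong\bP^{n-1}$ with $L=\mathcal O_{\bP^{n-1}}(1)$, i.e.\ $X_0\cong\C^n$, and a klt singularity whose stable degeneration is $\C^n$ is itself smooth at $x$ (using lower semicontinuity of local volumes \cite{BL18a}). This proves $\hvol(x,X)\le n^n$ with equality iff $X$ is smooth at $x$.

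For part (1) with $n\le 2$ I would use classification. In dimension $1$ there is nothing to prove, since $1$-dimensional klt singularities are smooth. In dimension $2$ every klt singularity is a quotient $\C^2/G$ with $G\subset\mathrm{GL}_2(\C)$ finite and free of quasi-reflections, and the multiplicativity of local volumes under quasi-\'etale Galois covers (\cite{LX20}) yields $\hvol(x,\C^2/G)=\hvol(0,\C^2)/|G|=4/|G|$. A non-smooth point forces $|G|\ge 2$, hence $\hvol\le 2=2(n-1)^n$, with equality iff $|G|=2$; since $G$ contains no quasi-reflection this forces $G=\{\pm 1\}$, i.e.\ the $A_1$ singularity, which is exactly the surface ordinary double point.

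The genuinely hard case, and where I expect the main obstacle, is part (1) in dimension $3$. Here klt singularities are no longer all quotients, so I would stratify by discrepancy. If $(x\in X)$ is klt but not canonical, its small minimal log discrepancy produces, via Theorem~\ref{thm:lx-Kollarcomp}, a Koll\'ar component of small normalized volume, bounding $\hvol$ strictly below $16=2\cdot 2^3$; this reduces the problem to terminal (and canonical) threefold singularities. For these one invokes the classification of terminal threefold singularities (compound Du Val points and their quotients) together with Reid's general-elephant theorem, which supplies a general member $S\ni x$ with Du Val singularities, and then bounds $\hvol(x,X)$ through a dimension-reduction comparison with $\hvol(x,S)$ and the two-dimensional result of part~(1); the equality locus must be tracked case by case so that only the threefold ordinary double point $\{x_0^2+\cdots+x_3^2=0\}$ saturates the bound. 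The essential difficulty is precisely this dependence on the full classification of terminal and canonical threefold singularities and the delicate equality analysis it requires; this is the input that is unavailable in dimension $\ge 4$, and it is exactly why the present paper circumvents the general conjecture (in the local complete intersection case) by Theorem~\ref{thm:lci} rather than extending Conjecture~\ref{conj:ODP} outright.
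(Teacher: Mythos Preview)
The paper does not give a proof of this theorem; it is stated purely as a citation of \cite{LL16} and \cite{LX19} and used as a black box throughout. There is thus no ``paper's own proof'' against which to compare.

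Your reconstruction of part~(1) for $n\le 2$ is correct (and your use of the finite degree formula is exactly the right tool), and your sketch for $n=3$, reducing to terminal/canonical threefold singularities via the classification and Reid's general elephant, is indeed the shape of the argument in \cite{LX19}; you are also right that this classification dependence is precisely why the present paper bypasses the conjecture via Theorem~\ref{thm:lci}.

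Your argument for part~(2), however, has a genuine gap. After degenerating to a K-semistable Fano cone $C(V,L)$ and writing $\hvol=r\cdot(-K_V)^{n-1}$ with $-K_V\sim_{\bQ}rL$, the step ``$r\le n$ by Kobayashi--Ochiai'' is unjustified: $V$ is a possibly singular klt Fano and $L$ is only an ample $\bQ$-Cartier Weil divisor (the grading of the cone), so classical Kobayashi--Ochiai does not apply. Concretely, $\bP(1,1,2)$ polarized by $\cO(1)$ has $r=4>3=\dim V+1$; the associated cone is $\bA^3$ with the weight-$(1,1,2)$ action, which is a legitimate (non-K-semistable) Fano cone structure. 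That the \emph{K-semistable} Reeb vector always yields $r\le n$ is exactly the content that needs proof, and the argument in \cite{LX19} does not proceed through Kobayashi--Ochiai. Two smaller points: in your equality case, ``lower semicontinuity \cite{BL18a}'' points the wrong way; what you really need is that a klt singularity whose stable degeneration is $\bC^n$ is itself smooth, which follows from rigidity of smooth points (or uniqueness of the minimizer), not semicontinuity. And ``the irregular case is handled identically'' is too fast, since in that case there is no quotient Fano $V$ to which either Kobayashi--Ochiai or Fujita's bound can be applied.
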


The following result verifies Conjecture \ref{conj:ODP} for local complete intersection singularities.

\begin{thm}\label{thm:vol-lci}
Conjecture \ref{conj:ODP} holds for all local complete intersection singularities.
\end{thm}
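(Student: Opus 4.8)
## Proof Strategy for Theorem~\ref{thm:vol-lci}

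The plan is to reduce the problem to a degeneration/semicontinuity argument. Recall from Theorem~\ref{thm:odp-lowdim}(2) that for any $n$-dimensional klt singularity one has $\hvol(x,X)\le n^n$, with equality only in the smooth case; we must improve $n^n$ to $2(n-1)^n$ under the local complete intersection (lci) hypothesis, and characterize equality. First I would reduce to the case where $(x\in X)$ is a hypersurface singularity of multiplicity $2$: a non-smooth lci singularity $(x\in X)\subset(0\in \bA^N)$ cut out by a regular sequence $f_1,\dots,f_c$ has tangent cone an intersection of quadrics and lower-degree vanishing loci; by taking an appropriate \emph{general} hyperplane slice or a coordinate projection one expects to pass to a hypersurface. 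More precisely, the key step is to show that $(x\in X)$ \emph{degenerates}, via a $\bQ$-Gorenstein (indeed flat, lci-preserving) family over $\bA^1$, to a hypersurface singularity $(0\in X_0)=\{g=0\}\subset \bA^{n+1}$ with $\mult_0(g)\ge 2$. This degeneration can be produced by degenerating each $f_i$ to its initial term with respect to a suitable weight, or—more robustly—by using that the generic hyperplane section has local volume controlled from below and iterating Theorem~\ref{thm:bertini} (the Bertini-type statement for local volumes referenced in the excerpt).

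The second step is the lower semicontinuity of normalized volume in $\bQ$-Gorenstein families, established in \cite{BL18a}: along the degeneration to $X_0$ we get
\[
\hvol(x,X)\ \le\ \hvol(0,X_0).
\]
Thus it suffices to bound $\hvol(0,X_0)$ for a hypersurface singularity $X_0=\{g=0\}$ with $d:=\mult_0(g)\ge 2$. For this I would exhibit an explicit valuation: the monomial valuation $v$ with weights $(1,\dots,1)$ on $\bA^{n+1}$, restricted to $X_0$. One computes $A_{X_0}(v)=n+1-d$ (which is $>0$ since klt hypersurface singularities have $d\le n$) and $\vol_{X_0,0}(v)=d$ by a standard Hilbert–Samuel computation on the graded ring, giving
\[
\hvol(0,X_0)\ \le\ \hvol_{X_0,0}(v)\ =\ (n+1-d)^n\cdot d.
\]
The function $d\mapsto (n+1-d)^n d$ on $\{2,\dots,n\}$ is maximized at $d=2$, where its value is exactly $2(n-1)^n$. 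This proves the inequality $\hvol(x,X)\le 2(n-1)^n$.

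For the equality characterization, suppose $\hvol(x,X)=2(n-1)^n$. Then every inequality above is an equality: the degeneration must satisfy $\hvol(0,X_0)=2(n-1)^n$ and $X_0$ must be a hypersurface of multiplicity exactly $2$. By Theorem~\ref{thm:odp-lowdim}(2) applied in the relevant auxiliary setting—or directly by the rigidity part of lower semicontinuity in \cite{BL18a}, which forces the minimizing valuation to degenerate and the central fiber to already attain the extremal volume—one concludes $(x\in X)\cong(0\in X_0)$. Finally, among multiplicity-$2$ hypersurface singularities, $\hvol(0,\{g=0\})=2(n-1)^n$ with the $(1,\dots,1)$-valuation being a minimizer forces the quadratic form $\mathrm{in}(g)$ to be of full rank $n+1$; lower rank would let us drop to a lower-dimensional quadric cross affine space and strictly decrease the volume (again by the equality analysis in low dimensions, Theorem~\ref{thm:odp-lowdim}, combined with slicing). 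A full-rank quadric cone is precisely the ordinary double point, completing the proof.

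The main obstacle I anticipate is the first step: producing a $\bQ$-Gorenstein degeneration of an \emph{arbitrary} lci klt singularity to a hypersurface one while keeping enough control (klt-ness of the central fiber, and no collapse of the log discrepancy of the test valuation). Naively degenerating $f_1,\dots,f_c$ to initial forms need not preserve klt-ness, and a crude hyperplane slice changes the dimension. The honest route is likely to invoke the Bertini-type Theorem~\ref{thm:bertini} to control $\hvol$ under slicing together with \cite{BL18a}, reducing inductively to the surface or threefold base case where Conjecture~\ref{conj:ODP} is already known (Theorem~\ref{thm:odp-lowdim}(1)); marrying these two inputs cleanly is where the real work lies.
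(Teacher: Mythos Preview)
Your proposal has the right skeleton for the hypersurface case ($r=1$): the bound $\hvol(x,X)\le (n+1-d)^n\cdot d\le 2(n-1)^n$ via the order valuation is exactly what the paper uses (citing \cite[Lemma 3.1]{LX19}). But the reduction step for embedding codimension $r\ge 2$ is genuinely broken, and in two related ways.

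First, you have the direction of lower semicontinuity backwards. If $(x\in X)$ \emph{degenerates} to $(0\in X_0)$---i.e.\ $X$ is the general fiber and $X_0$ the special fiber---then \cite{BL18a} gives $\hvol(0,X_0)\le \hvol(x,X)$, not the inequality you wrote. To bound $\hvol(x,X)$ from above you must place $X$ at the \emph{special} fiber and deform it to something nicer; your suggestion of ``degenerating each $f_i$ to its initial term'' does the opposite. Second, even with the direction fixed, an lci singularity of embedding codimension $r\ge 2$ cannot sit in a flat family whose nearby fibers are hypersurfaces: embedding dimension is upper semicontinuous, so the special fiber would need embedding dimension at least $n+r>n+1$. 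The Bertini route via Theorem~\ref{thm:bertini} does not rescue this either, since that theorem requires the singularity to be non-isolated, while an isolated lci singularity in dimension $\ge 4$ is exactly the case you cannot avoid.

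The paper's fix is not to reduce to a hypersurface at all. For $r\ge 2$ one perturbs the defining equations $f_i\mapsto f_i+tg_i$ with $g_1,g_2$ generic in $\fm^2$ and the remaining $g_i$ generic in $\fm$; this keeps $X=X_0$ at the special fiber and makes the general fiber $X_t$ a complete intersection whose projectivized tangent cone is a smooth intersection of two quadrics. The ordinary blowup divisor $S_t$ then has $A_{X_t}(S_t)=n-2$ and $\vol_{X_t,x}(S_t)=4$, so $\hvol(x,X)\le \hvol(x,X_t)\le 4(n-2)^n$, and one checks $4(n-2)^n<2(n-1)^n$ for $n\ge 4$ (small $n$ being covered by Theorem~\ref{thm:odp-lowdim}). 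For the equality characterization in the hypersurface case, your appeal to a ``rigidity part of lower semicontinuity'' is not something \cite{BL18a} actually provides; the paper instead deforms a non-ODP multiplicity-$2$ hypersurface (corank $\ge 1$) to an $A_2$-singularity $z_0^2+\cdots+z_{n-1}^2+z_n^3$ and invokes the explicit estimate $\hvol\le \frac{2n^n(n-2)^{n-1}}{(n-1)^{n-1}}<2(n-1)^n$ from \cite[Example~5.3]{Li18}.
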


\begin{proof}
Let $x\in X$ be an $n$-dimensional local complete intersection singularity. Since Conjecture \ref{conj:ODP} holds in dimension $\leq 3$ by Theorem \ref{thm:odp-lowdim}(1), we may assume that $n\geq 4$. Then there exists a locally closed immersion $X\hookrightarrow Z$ into a smooth variety $Z$ of dimension $n+r$ such that $X=V(f_1, f_2,\cdots, f_r)$ for some $f_i\in \cO_{Z, x}$. We may assume that $r$ achieves its minimum, i.e.\ $r:=\edim(x,X)-n$ where $\edim(x,X):=\ell(\fm_{X,x}/\fm_{X,x}^2)$ denotes the embedding dimension. In particular, $\ord_x(f_i)\geq 2$ for any $1\leq i\leq r$. Since $x\in X$ is non-smooth, we have $r\geq 1$. 

If $r\geq 2$, then we choose $g_1,g_2\in \fm_{Z,x}^2$ and $g_3, \cdots, g_n\in \fm_{Z,x}$, so that $\bar{g}_1,\bar{g}_2\in  \fm_{Z,x}^2/\fm_{Z,x}^3$ and $\bar{g}_i\in \fm_{Z,x}/\fm_{Z,x}^2$ are general for $i\geq 3$. Let $X_t: = V(f_1+tg_1, f_2+tg_2, \cdots, f_r+tg_r)$. Then it is clear that $(x\in X_t)_{t\in \bA^1}$ is a $\bQ$-Gorenstein flat family of klt singularities. Moreover, for a general $t\in \bA^1$ we know that $x\in X_t$ is a local complete intersection of two quadrics with smooth projective tangent cone. Let $S_t$ be the exceptional divisor of the ordinary blow up of $x\in X_t$ for general $t$. Then simple calculation shows that $A_{X_t}(S_t)=  n-2$ and $\vol_{X_t, x}(S_t)=\e(x,X_t)=4$. Thus for general $t$ we have
\[
 \hvol(x,X_t)\leq A_{X_t}(S_t)^n\cdot \vol_{X_t, x}(S_t)= 4(n-2)^n <2(n-1)^n. 
\]
Thus the lower semicontinuity of local volumes \cite{BL18a} implies that $\hvol(x,X)\leq \hvol(x,X_t)< 2(n-1)^n$. 

If $r=1$, then $x\in X= V(f)$ is a hypersurface singularity for some $f\in \cO_{Z,x}$. By \cite[Lemma 3.1]{LX19}, we know that $\hvol(x,X)\leq (n+1-\ord_x f)^n\cdot \ord_x f\leq  2(n-1)^n$. If the equality holds, then we must have $\ord_x f=2$. Assume to the contrary that $x\in X$ is not an ordinary double point. By choosing a suitable algebraic coordinates $(z_0,\cdots, z_n)$ at $x\in Z$,  we may assume that $f-(z_0^2+\cdots+z_m^2)\in \fm_{Z,x}^3$ where $m<n$. Let $g=z_0^2+\cdots+z_{n-1}^2+z_n^3\in \fm_{Z,x}^2$, and let $X_t:=V(f+tg)$. Then by \cite{BL18a} we know that $\hvol(x,X)\leq \hvol(x,X_t)$ for general $t$. Since the degree $2$ part of $f+tg$ has rank $n$ for general $t$, we know that $x\in X_t$ is an $n$-dimensional $A_{2}$-singularity. Hence \cite[Example 5.3]{Li18} for $n\geq 4$ implies that 
\[
 \hvol(x,X)\leq \hvol(x,X_t)\leq \frac{2n^n(n-2)^{n-1}}{(n-1)^{n-1}}< 2(n-1)^n. 
\]
Hence we get a contradiction. This finishes the proof.
\end{proof}

The following result from \cite[Theorem 1.3]{XZ20} on finite degree formula of local volumes is very useful. Note that when $X$ is a Gromov-Hausdorff limit of K\"ahler-Einstein Fano manifolds, such a result was proven earlier in \cite[Theorem 1.7]{LX18}.

\begin{thm}[\cite{XZ20}]\label{thm:vol-finitedeg}
Let $\tau: (\tilde{x}\in \tX)\to (x\in X)$ be a finite quasi-\'etale Galois morphism between klt singularities. Then 
\[
\hvol(\tilde{x},\tX)=\deg(\tau)\cdot \hvol(x,X).
\]
\end{thm}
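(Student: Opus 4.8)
The plan is to prove the two inequalities $\hvol(\tilde x,\tX)\le\deg(\tau)\cdot\hvol(x,X)$ and $\hvol(\tilde x,\tX)\ge\deg(\tau)\cdot\hvol(x,X)$ separately. For the upper bound I would use the ideal characterization of Theorem~\ref{thm:vol=lcte}. Given an $\fm_{X,x}$-primary ideal $\fa\subseteq\cO_{X,x}$, its extension $\tilde\fa:=\fa\,\cO_{\tX,\tilde x}$ is $\fm_{\tX,\tilde x}$-primary. Since $\tau$ is quasi-\'etale we have $K_{\tX}=\tau^*K_X$, and $\tilde\fa$ cuts out $\tau^{-1}(V(\fa))$ scheme-theoretically; passing to a $G$-equivariant log resolution and checking that a divisor $\tilde E$ over $\tX$ lying over a divisor $E$ over $X$ with ramification index $e$ satisfies $A_{\tX}(\tilde E)=e\,A_X(E)$ and $\ord_{\tilde E}(\tilde\fa)=e\,\ord_E(\fa)$, one gets $\lct(\tX;\tilde\fa)=\lct(X;\fa)$. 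On the other hand $\cO_{\tX,\tilde x}$ is a torsion-free $\cO_{X,x}$-module of generic rank $\deg(\tau)$, so the associativity formula for Hilbert--Samuel multiplicities gives $\e(\tilde\fa)=\deg(\tau)\cdot\e(\fa)$. Hence $\lct(\tX;\tilde\fa)^n\,\e(\tilde\fa)=\deg(\tau)\cdot\lct(X;\fa)^n\,\e(\fa)$, and taking the infimum over all $\fm$-primary ideals — the extended ideals forming a subfamily of all such ideals upstairs — yields $\hvol(\tilde x,\tX)\le\deg(\tau)\cdot\hvol(x,X)$.

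The reverse inequality is the main point, and I expect it to be the hard part, since the $\hvol_{\tX,\tilde x}$-minimizer need not be pulled back from $X$. The key is to invoke the uniqueness of the normalized volume minimizer: by \cite{Blu18} a (quasi-monomial) minimizer $\tilde v$ of $\hvol_{\tX,\tilde x}$ exists, and by \cite{LX20, XZ20} it is unique up to scaling. The Galois group $G:=\Aut(\tX/X)$ acts on the valuations over $\tX$ centered at $\tilde x$ and preserves $\hvol_{\tX,\tilde x}$ (as it acts by automorphisms fixing $\tilde x$), hence it permutes the minimizers; by uniqueness it fixes $\tilde v$. Thus $\tilde v$ is $G$-invariant, so its restriction to $K(X)$ equals $e\cdot v$ for a valuation $v$ over $X$ centered at $x$ and a ramification index $e$, and $A_{\tX}(\tilde v)=e\,A_X(v)$ by the computation above. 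It then remains to show $\vol_{\tX,\tilde x}(\tilde v)=e^{-n}\deg(\tau)\cdot\vol_{X,x}(v)$, equivalently $\hvol_{\tX,\tilde x}(\tilde v)=\deg(\tau)\cdot\hvol_{X,x}(v)$; granting this, $\hvol(\tilde x,\tX)=\hvol_{\tX,\tilde x}(\tilde v)=\deg(\tau)\,\hvol_{X,x}(v)\ge\deg(\tau)\,\hvol(x,X)$, which together with the upper bound finishes the proof.

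The remaining volume comparison is where the deep input enters. I would use the finite generation of $\gr_{\tilde v}\cO_{\tX,\tilde x}$ — part of the uniqueness package of \cite{LX20, XZ20} — to degenerate $(\tilde x\in\tX)$ to the Fano cone $\Spec(\gr_{\tilde v}\cO_{\tX,\tilde x})$ and, $G$-equivariantly, $(x\in X)$ to $\Spec(\gr_v\cO_{X,x})=\Spec((\gr_{\tilde v}\cO_{\tX,\tilde x})^G)$. Since the minimizer is preserved under this degeneration, $\hvol$ is unchanged on both sides (cf.\ \cite{BL18a}), the induced map between the two cones is finite quasi-\'etale of degree $\deg(\tau)$, and on a polarized K-semistable Fano cone the normalized volume of the vertex is computed from the polarization and hence multiplies by the degree under such a cover. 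This reduces the identity to an essentially elementary computation on cones, completing the argument. The hardest conceptual ingredients are thus the uniqueness (hence $G$-invariance) of the minimizer and the finite generation of its associated graded ring; the rest is bookkeeping with log discrepancies, multiplicities, and ramification indices.
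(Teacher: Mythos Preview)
The paper does not contain a proof of this theorem at all: it is quoted from \cite[Theorem~1.3]{XZ20} (with the earlier special case in \cite[Theorem~1.7]{LX18} noted), and is used as a black box. So there is no ``paper's own proof'' to compare your proposal against.

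That said, your sketch is essentially the strategy carried out in \cite{XZ20}. The easy direction via pulled-back ideals and Theorem~\ref{thm:vol=lcte} is standard, and the hard direction is exactly where the uniqueness of the minimizing valuation enters: $G$ acts on $\Val_{\tX,\tilde x}$ preserving $\hvol$, so uniqueness forces $G$-invariance of the minimizer, after which one descends. One small wrinkle in your write-up: once $\tilde v$ is $G$-invariant, its restriction to $K(X)$ \emph{is} the valuation $v$ (no extra factor $e$ appears in the restriction itself); the ramification bookkeeping shows up rather in comparing $A$ and $\vol$ along the tower, and in \cite{XZ20} this is handled by passing to the induced K-semistable Fano cone degenerations, which are again related by a degree-$\deg(\tau)$ quasi-\'etale Galois cover, exactly as you indicate in your last paragraph. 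The genuinely deep inputs you identify --- uniqueness of the minimizer and finite generation of the associated graded --- are precisely the content supplied by \cite{XZ20}, which is why the present paper simply cites the result rather than reproving it.
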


The following theorem is one of the key ingredients in the moduli continuity method. It is a generalization of \cite{Fuj18}.

\begin{thm}[\cite{Liu18}]\label{thm:liu18}
Let $X$ be an $n$-dimensional K-semistable $\bQ$-Fano variety. Then for any closed point $x\in X$ we have 
\[
(-K_X)^n\leq \left(1+\frac{1}{n}\right)^n \hvol(x,X).
\]
\end{thm}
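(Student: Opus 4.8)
The plan is to prove the pointwise bound
\[
(-K_X)^n\le\Big(1+\tfrac1n\Big)^n A_X(E)^n\vol_{X,x}(E)=\Big(1+\tfrac1n\Big)^n\hvol_{X,x}(E)
\]
for every prime divisor $E$ over $X$ whose center on $X$ is the closed point $x$, and then take the infimum over such $E$, which yields the theorem by the definition of $\hvol(x,X)$. Fix such an $E$, say $E\subset Y$ with $\mu\colon Y\to X$ proper birational and $Y$ normal, and abbreviate $V:=(-K_X)^n$, $a:=A_X(E)$, $v:=\vol_{X,x}(E)$, $T:=T_X(E)$. Two preliminary remarks: since the center of $E$ is $x$, for $f\in\mathcal{O}_{X,x}$ one has $\ord_E(f)\ge1\iff f\in\fm_{X,x}$, so $\fm_{X,x}^m\subseteq\fa_m(E)$ for every $m$; hence each $\fa_m(E)$ is $\fm_{X,x}$-primary and $0<v<\infty$ (finiteness from $\fa_m(E)\supseteq\fm_{X,x}^m$, positivity because $\hvol_{X,x}(E)\ge\hvol(x,X)>0$ while $a<\infty$).

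The heart of the argument is the estimate
\[
\vol_Y\big(\mu^*(-K_X)-tE\big)\ \ge\ V-t^nv\qquad\text{for all }t\ge0 .
\]
To prove it I would fix $r\in\bZ_{>0}$ with $-rK_X$ Cartier. For integers $m,j\ge0$, the projection formula together with the normality of $X$ and $Y$ gives $\mu_*\mathcal{O}_Y\big(\mu^*(-mrK_X)-jE\big)=\mathcal{O}_X(-mrK_X)\otimes\fa_j(E)$, so tensoring $0\to\fa_j(E)\to\mathcal{O}_X\to\mathcal{O}_X/\fa_j(E)\to0$ with the line bundle $\mathcal{O}_X(-mrK_X)$ and passing to cohomology yields
\[
h^0(X,-mrK_X)-h^0\big(Y,\mu^*(-mrK_X)-jE\big)\ \le\ \ell\big(\mathcal{O}_{X,x}/\fa_j(E)\big),
\]
because $\mathcal{O}_X(-mrK_X)\otimes(\mathcal{O}_X/\fa_j(E))$ is an Artinian sheaf supported at $x$ with $h^0$ equal to $\ell(\mathcal{O}_{X,x}/\fa_j(E))$. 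Now take $j=\lceil tmr\rceil$, divide by $(mr)^n/n!$, and let $m\to\infty$: the first term tends to $\vol_X(-K_X)=V$, the second to $\vol_Y(\mu^*(-K_X)-tE)$, and the right-hand side to $t^nv$, which is the claimed estimate.

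Given the estimate, the rest is short. If $t<T_0:=(V/v)^{1/n}$ then $V-t^nv>0$, so $\mu^*(-K_X)-tE$ is big; hence $T\ge T_0$. Consequently
\[
S_X(E)=\frac1V\int_0^{T}\vol_Y\big(\mu^*(-K_X)-tE\big)\,dt\ \ge\ \frac1V\int_0^{T_0}\big(V-t^nv\big)\,dt\ =\ \frac{n}{n+1}\,T_0 ,
\]
the last equality using $vT_0^n=V$. Since $X$ is K-semistable, Theorem-Definition \ref{thm:valuative}(1) gives $\beta_X(E)=a-S_X(E)\ge0$, i.e.\ $a\ge\frac{n}{n+1}(V/v)^{1/n}$, which rearranges to $V\le(1+\tfrac1n)^na^nv$. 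Taking the infimum over all prime divisors $E$ over $X$ centered at $x$ finishes the proof.

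I expect the main obstacle to be the volume estimate --- concretely, the asymptotic comparison between sections of $-mrK_X$ vanishing to order $\ge tmr$ along $E$ and the local colengths $\ell(\mathcal{O}_{X,x}/\fa_{\lceil tmr\rceil}(E))$. The cohomology sequence above reduces it to two standard but non-formal inputs: that the limit defining the volume of the (possibly non-nef) big $\bQ$-divisor $\mu^*(-K_X)-tE$ on $Y$ exists, and that the limit defining $\vol_{X,x}(E)$ exists --- both being classical facts about graded linear series. Everything after the estimate is elementary calculus and one use of the valuative criterion. (One could instead restrict to Koll\'ar components over $x$ and cite Theorem \ref{thm:lx-Kollarcomp}, which makes the $\fm_{X,x}$-primariness of $\fa_m$ automatic but is not logically needed.)
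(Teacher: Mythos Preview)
Your argument is correct and is essentially the proof given in the cited reference \cite{Liu18}; the present paper does not reprove the statement but only quotes it as Theorem~\ref{thm:liu18}. The key volume estimate $\vol_Y(\mu^*(-K_X)-tE)\ge V-t^nv$ via the cohomology sequence, followed by the integral bound $S_X(E)\ge\tfrac{n}{n+1}(V/v)^{1/n}$ and the valuative criterion, is exactly Liu's original approach (itself a refinement of Fujita's inequality), so there is nothing to compare.
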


The following lemma is well-known to experts.

\begin{lem}\label{lem:volsame}
Let $(x\in X)$ and $(x'\in X')$ be two klt singularities that are analytically isomorphic, that is, $\widehat{\cO_{X,x}}\cong \widehat{\cO_{X',x'}}$. Then $\hvol(x,X)=\hvol(x',X')$. 
\end{lem}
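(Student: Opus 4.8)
The statement is that the local volume $\hvol(x,X)$ is an invariant of the analytic (equivalently, formal) isomorphism type of the singularity $(x\in X)$. I would prove this by showing that every ingredient in the definition of $\hvol$ only depends on the complete local ring $\widehat{\cO_{X,x}}$. Concretely, I would use the characterization from Theorem \ref{thm:vol=lcte}, namely
\[
\hvol(x,X)=\inf_{\fa\colon\textrm{$\fm$-primary}}\lct(X;\fa)^n\cdot\e(\fa),
\]
so that it suffices to show that both $\e(\fa)$ and $\lct(X;\fa)$ are unchanged under passing to the completion. For the Hilbert--Samuel multiplicity this is classical: $\e(\fa)$ depends only on the $\fm$-adic filtration of $R$, and $\widehat{R}$ is a faithfully flat $R$-algebra with $\fa\widehat{R}$ still $\widehat{\fm}$-primary and $\widehat{R}/\fa^m\widehat{R}\cong R/\fa^m$ for all $m$, so the Hilbert--Samuel functions agree. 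The content is therefore concentrated in the log canonical threshold.

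\emph{Handling the log canonical threshold.} For an $\fm$-primary ideal $\fa\subset R$, the invariant $\lct(X;\fa)$ is computed by a log resolution of the pair, and I would argue that it is preserved under completion. One clean route: $\lct(X;\fa)=\inf_E\frac{A_X(E)}{\ord_E(\fa)}$ where $E$ ranges over prime divisors over $X$ centered at $x$ (this is the divisorial description of the lct of an $\fm$-primary ideal, valid since every such divisor contributes). Both $A_X(E)$ and $\ord_E(\fa)$ depend only on $\widehat{\cO_{X,x}}$: divisorial valuations centered at $x$ correspond bijectively to divisorial valuations on $\Spec\widehat{\cO_{X,x}}$ with the same value on any $f\in\fm$, and the log discrepancy $A_X(E)$ is a local analytic quantity (it can be read off from $K_Y-\mu^*K_X$ near the generic point of $E$, which only sees a neighborhood of $x$; more formally, $A_X(E)=A_{\widehat{X}}(\widehat{E})$ by the behaviour of log discrepancies under the flat local morphism $\Spec\widehat{R}\to\Spec R$, using e.g. that resolutions base-change and relative canonical divisors are compatible). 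Since the analytic isomorphism $\widehat{\cO_{X,x}}\cong\widehat{\cO_{X',x'}}$ induces a bijection between $\fm$-primary ideals preserving both $\e$ and $\lct$, taking the infimum gives $\hvol(x,X)=\hvol(x',X')$.

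\emph{An alternative, perhaps cleaner, route.} Instead of the ideal characterization one can use Theorem \ref{thm:lx-Kollarcomp}: $\hvol(x,X)=\inf_S\hvol_{X,x}(S)$ over Koll\'ar components. One then notes that being a Koll\'ar component, as well as the quantities $A_X(S)$ and $\vol_{X,x}(S)$, are detected on an arbitrarily small (analytic or formal) neighborhood of $x$, since the extraction $\mu\colon Y\to X$ is an isomorphism away from $x$. Thus an analytic isomorphism germ transports Koll\'ar components to Koll\'ar components and preserves their normalized volumes, and again the infimum is unchanged. Either way, the crux is the purely local/formal invariance of log discrepancies and multiplicities, which is standard; I would cite the relevant lemmas (e.g. from \cite{JM12} or \cite{Kol13}) rather than reprove them.

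\emph{Main obstacle.} The only genuinely delicate point is the compatibility of log discrepancies (and of the plt/klt conditions) under completion at $x$: one must know that resolutions, canonical divisors, and discrepancies behave well under the faithfully flat morphism $\Spec\widehat{\cO_{X,x}}\to\Spec\cO_{X,x}$, and that $\widehat{\cO_{X,x}}$ is still excellent and klt. These facts are known (excellence of complete local rings, and stability of klt and log discrepancies under such flat base change), so the proof reduces to assembling them; I would phrase the argument so as to invoke the ideal/multiplicity characterization and the divisorial formula for $\lct$ of an $\fm$-primary ideal, where the base-change statements are cleanest.
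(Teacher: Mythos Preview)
Your proposal is correct and follows essentially the same approach as the paper: use the ideal characterization from Theorem \ref{thm:vol=lcte}, set up the bijection between $\fm$-primary ideals of $R$ and $\fm'$-primary ideals of $R'$ through the isomorphism of completions, and check that both the Hilbert--Samuel multiplicity and the log canonical threshold are preserved. The paper handles the lct step by a direct citation to \cite[Proposition 2.11]{dFEM11} rather than unpacking the divisorial formula as you do, but the logic is identical.
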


\begin{proof}
Denote by $(R,\fm):=(\cO_{X,x},\fm_{X,x})$ and $(R',\fm'):=(\cO_{X',x'},\fm_{X',x'})$. Denote by $\iota: \widehat{R}\to \widehat{R'}$ the ring isomorphism. Then any $\fm$-primary ideal $\fa$ of $R$ corresponds to a unique $\fm'$-primary ideal $\fa'$ of $R'$ via $\fa'=\iota(\widehat{\fa})\cap R'$. Under this correspondence, it is easy to see $\e(\fa)=\e(\fa')$. By \cite[Proposition 2.11]{dFEM11} we know that $\lct(X;\fa)=\lct(X';\fa')$. Hence the statement follows from Theorem \ref{thm:vol=lcte}.
\end{proof}

\subsection{Bertini type result for local volumes}
In this section, we prove the following Bertini type result for local volumes.

\begin{thm} \label{thm:bertini}
Let $x\in X$ be a non-isolated $n$-dimensional klt singularity. Then there exists a non-smooth $(n-1)$-dimensional klt singularity $y\in Y$ such that 
\[
\frac{\hvol(x,X)}{n^n}\leq \frac{\hvol(y,Y)}{(n-1)^{n-1}}.
\]
Moreover, $Y$ can be chosen as a general hyperplane section of $X$. 
\end{thm}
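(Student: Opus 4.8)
The plan is to take $Y$ to be a general hyperplane section of $X$, to locate the required non-smooth $(n-1)$-dimensional klt singularity $y\in Y$ by a Bertini-type argument, to reduce the assertion to a purely local volume inequality at $y$, and then to prove that inequality by extending a near-optimal valuation from $Y$ to $X$ and optimizing a single real parameter.

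\emph{Step 1 (geometry of the section).} Since $x$ is non-isolated, $\mathrm{Sing}(X)$ has a component $Z\ni x$ with $\dim Z\ge 1$; as $X$ is normal, $\dim\mathrm{Sing}(X)\le n-2$, so in particular $n\ge 3$. Let $Y=X\cap H$ be a general hyperplane section. Klt singularities are rational, hence Cohen--Macaulay, so $Y$ is Cohen--Macaulay; and for general $H$ one has $\mathrm{Sing}(Y)=\mathrm{Sing}(X)\cap H$, of dimension $\le (n-2)-1=\dim Y-2$, whence $Y$ is normal by Serre's criterion, and a standard Bertini argument on a log resolution of $X$ (together with the adjunction $K_Y=(K_X+Y)|_Y$) shows that $Y$ is klt. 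Since $\dim Z\ge 1$, $Z\cap H\ne\emptyset$; pick $y\in Z\cap H$, which is a general point of $Z$. Then $\dim T_yY\ge\dim T_yX-1\ge (n+1)-1>n-1=\dim Y$, so $y$ is a non-smooth point of the klt variety $Y$. Finally, since $x\in\overline{\{y\}}=Z$, lower semicontinuity of local volumes \cite{BL18a} gives $\hvol(x,X)\le\hvol(y,X)$. Hence it suffices to prove $\hvol(y,X)\,(n-1)^{n-1}\le\hvol(y,Y)\,n^n$.

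\emph{Step 2 (extending a valuation, and the two key identities).} Let $\ell\in\cO_{X,y}$ be a local equation of $Y$; it is a non-zerodivisor, so $\mathrm{gr}_{(\ell)}\cO_{X,y}\cong\cO_{Y,y}[\ell]$. By the definition of $\hvol(y,Y)$, choose a prime divisor $E$ over $Y$ centered at $y$ with $A_Y(E)^{n-1}\vol_{Y,y}(E)$ arbitrarily close to $\hvol(y,Y)$, and set $a:=A_Y(E)>0$, $V:=\vol_{Y,y}(E)$. For $t>0$, let $w_t$ be the valuation on $X$, centered at $y$, given by the monomial combination of $\ord_E$ (weight $1$) with $\ord_Y$ (weight $t$); equivalently $w_t(\ell)=t$ and $w_t|_Y=\ord_E$, its valuation ideals refining the $\ell$-adic filtration of $\cO_{X,y}$ by (lifts of) the valuation ideals $\fa_{\lceil m-tk\rceil}(\ord_E)$ in the $\ell$-degree $k$ part. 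I claim $A_X(w_t)=a+t$ and $\vol_{X,y}(w_t)=V/t$. The first identity is adjunction for valuations, using that $Y$ is a prime Cartier divisor on $X$ (so $A_X(\ord_Y)=1$). The second follows by summing lengths along the $\ell$-adic filtration: $\ell\big(\cO_{X,y}/\fa_m(w_t)\big)=\sum_{0\le k<m/t}\ell\big(\cO_{Y,y}/\fa_{\lceil m-tk\rceil}(\ord_E)\big)=\tfrac{V}{(n-1)!}\sum_{0\le k<m/t}(m-tk)^{n-1}+o(m^n)=\tfrac{V}{t\,n!}\,m^n+o(m^n)$.

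\emph{Step 3 (optimization, and the main obstacle).} By Step 2, for every $t>0$,
\[
\hvol(y,X)\ \le\ A_X(w_t)^n\cdot\vol_{X,y}(w_t)\ =\ \frac{(a+t)^n}{t}\,V .
\]
The function $t\mapsto (a+t)^n/t$ attains its minimum at $t=\tfrac{a}{n-1}$, where it equals $\tfrac{n^n}{(n-1)^{n-1}}a^{n-1}$, so $\hvol(y,X)\le\tfrac{n^n}{(n-1)^{n-1}}a^{n-1}V$. Letting $a^{n-1}V\to\hvol(y,Y)$ gives the inequality reduced to in Step 1, completing the proof; note the extremal choice $Y$ is a general hyperplane section, as claimed. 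I expect the main obstacle to be Step 2: making rigorous sense of the extended valuation $w_t$ and verifying the two identities when $X$ is singular at $y$ (so $\widehat{\cO_{X,y}}$ need not split off $\widehat{\cO_{Y,y}}$), in particular checking that the $\ell$-adic filtration is well enough behaved that $\vol_{X,y}(w_t)=\vol_{Y,y}(\ord_E)/t$, and that the adjunction formula $A_X(w_t)=A_Y(\ord_E)+t$ is valid in this generality. By contrast, the Bertini input in Step 1 and the one-variable optimization in Step 3 are routine.
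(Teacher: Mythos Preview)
Your proof is correct and follows the same core strategy as the paper: reduce from $x$ to a general nearby singular point $y$ via lower semicontinuity \cite{BL18a}, then extend a near-optimal valuation from the section $Y$ to $X$ by a quasi-monomial combination with $\ord_Y$, and optimize the weight. The packaging differs in two respects. First, the paper isolates the local step as a separate result about fibrations (Proposition~\ref{prop:vol-fibration}): it views a general linear projection $X\to\bA^1$ as a family along the normalization of a curve $C\subset X_{\mathrm{sing}}$ and proves the \emph{equality} $\hvol(y,X)/n^n=\hvol(y,Y)/(n-1)^{n-1}$ for general $y$, invoking \cite{LZ18} for the reverse inequality you do not need. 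Second, to address precisely the obstacle you flag in Step~2, the paper takes a Koll\'ar component on the geometric generic fiber, spreads it after \'etale base change to a flat family of Koll\'ar components on a model $\cY\to\cX$, and then argues via the graded ideal sequence $\cI_{\bullet,s}=\fb_{ms}+\fb_{(m-1)s}t+\cdots+(t^m)$ together with the $\lct^n\cdot\e$ characterization of $\hvol$ (Theorem~\ref{thm:vol=lcte}), rather than computing $A_X(w_t)$ and $\vol_{X,y}(w_t)$ directly. This gives a concrete model on which the quasi-monomial valuation is well-defined and the identity $A_{\cX}(v_s)=s+A_{\cX_b}(\cS_b)$ is immediate. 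Your more direct route is economical for the bare inequality, but you should either produce such a model or recast Step~2 in terms of the ideal sequence $\cI_{m,t}$ as the paper does. A minor slip: ``$x\in\overline{\{y\}}=Z$'' is not what you mean, since $y$ is a closed point; you want $y$ general on a curve in $Z$ through $x$ so that lower semicontinuity along that curve applies.
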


Before presenting the proof of Theorem \ref{thm:bertini}, we need the following result on the local volume of fibrations.

\begin{prop}\label{prop:vol-fibration}
Let $\pi:\cX\to B$ together with a section $\sigma:B\to \cX$ be a $\bQ$-Gorenstein flat family of klt singularities over a smooth curve $B$. Then for a general point $b\in B$ we have 
\[
\frac{\hvol(\sigma(b), \cX)}{n^n}=\frac{\hvol(\sigma(b), \cX_b)}{(n-1)^{n-1}}
\]
\end{prop}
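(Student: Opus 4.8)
We prove the two inequalities that together give the displayed equality. Fix a general $b\in B$ and write $p=\sigma(b)$, $R=\cO_{\cX,p}$; let $\tau\in R$ be the pullback under $\pi$ of a local uniformizer of $B$ at $b$. By flatness $\tau$ is a nonzerodivisor, $R/(\tau)\cong\cO_{\cX_b,p}$, and the $\tau$-adic associated graded ring is $\gr_\tau R\cong\cO_{\cX_b,p}[\tau]$, the coordinate ring of $\cX_b\times\bA^1$. Since $\cX_b$ is a klt Cartier divisor in $\cX$, inversion of adjunction shows $(\cX,\cX_b)$ is plt near $\cX_b$, hence $\cX$ is klt at $p$ and both local volumes in the statement are defined. (It is here that ``general $b$'' is used, to guarantee that the fibre, and hence the degenerations below, are klt.)

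\emph{Step 1: $\hvol(p,\cX)\le\frac{n^n}{(n-1)^{n-1}}\hvol(p,\cX_b)$.} By Theorem~\ref{thm:lx-Kollarcomp} it suffices to bound $\hvol(p,\cX)$ above by $\frac{n^n}{(n-1)^{n-1}}\hvol_{\cX_b,p}(S_0)$ for each Koll\'ar component $S_0$ over $p\in\cX_b$; put $a_0=A_{\cX_b}(S_0)$. For $\lambda\in\bQ_{>0}$ the plan is to build a ``cylinder'' divisorial valuation $v_\lambda$ over $p\in\cX$: on a model of $\cX$ on which $\cX_b$ has smooth strict transform and whose restriction over $\cX_b$ dominates the extraction of $S_0$, one takes $v_\lambda$ to be the quasi-monomial valuation that assigns weight $\lambda$ transverse to the strict transform of $\cX_b$ and, within that transform, recovers $\ord_{S_0}$. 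Using that $\cX_b$ is Cartier (so $K_{\cX_b}=(K_\cX+\cX_b)|_{\cX_b}$) and the additivity of log discrepancy on quasi-monomial valuations one gets $A_\cX(v_\lambda)=a_0+\lambda$, and a direct computation of the associated graded ring of $v_\lambda$ gives $\vol_{\cX,p}(v_\lambda)=\frac1\lambda\vol_{\cX_b,p}(S_0)$. Therefore
\[
\hvol_{\cX,p}(v_\lambda)=A_\cX(v_\lambda)^n\,\vol_{\cX,p}(v_\lambda)=\frac{(a_0+\lambda)^n}{\lambda}\,\vol_{\cX_b,p}(S_0),
\]
which over $\lambda>0$ attains its minimum at $\lambda=\frac{a_0}{n-1}$, namely $\frac{n^n}{(n-1)^{n-1}}a_0^{\,n-1}\vol_{\cX_b,p}(S_0)=\frac{n^n}{(n-1)^{n-1}}\hvol_{\cX_b,p}(S_0)$. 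Taking the infimum over $S_0$ proves Step~1.

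\emph{Step 2: $\hvol(p,\cX)\ge\frac{n^n}{(n-1)^{n-1}}\hvol(p,\cX_b)$.} Here the plan is to degenerate $\cX$ to the product $\cX_b\times\bA^1$. As $\cX_b$ is a Cartier divisor, the extended Rees construction for $(\tau)\subset R$ is realized by the hypersurface $\{us=\tau\}\subset\cX\times\bA^2_{u,s}$; over $\bA^1_s$ it is a flat family carrying the constant section $\{(p,0,s)\}$, with general fibre $(p\in\cX)$ and central fibre $(p,0)\in\Spec\gr_\tau R=\cX_b\times\bA^1$. Near the section this total space is normal (which needs only that the fibre singular loci have codimension $\ge 2$) and, being a Cartier divisor in the $\bQ$-Gorenstein variety $\cX\times\bA^2$, is $\bQ$-Gorenstein, with all fibres klt; so it is a $\bQ$-Gorenstein flat family of klt singularities, and lower semicontinuity of normalized volumes \cite{BL18a} yields $\hvol(p,\cX)\ge\hvol\big((p,0),\cX_b\times\bA^1\big)$. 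Finally, by the (well-known) product formula
\[
\hvol\big((y,0),Y\times\bA^1\big)=\frac{n^n}{(n-1)^{n-1}}\hvol(y,Y)\qquad\text{for }\dim Y=n-1,
\]
applied to $(p\in\cX_b)$, the right-hand side equals $\frac{n^n}{(n-1)^{n-1}}\hvol(p,\cX_b)$. (The ``$\le$'' in the product formula is the cylinder construction of Step~1 applied to the product; the ``$\ge$'' follows from $\bG_m$-equivariance in the $\bA^1$-factor together with the uniqueness of the minimizing valuation, which force the minimizer to be a cylinder valuation.) Combining Steps~1 and~2 gives the proposition.

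The step I expect to be the main obstacle is the cylinder construction of Step~1: exhibiting the model on which $v_\lambda$ is quasi-monomial with the correct SNC divisorial ``coordinates'', and verifying $A_\cX(v_\lambda)=a_0+\lambda$ and $\vol_{\cX,p}(v_\lambda)=\lambda^{-1}\vol_{\cX_b,p}(S_0)$ there (equivalently, giving a self-contained proof of the product formula in the stated form). By contrast, Step~2 is routine once the normality and $\bQ$-Gorensteinness of $\{us=\tau\}$ near the section have been checked.
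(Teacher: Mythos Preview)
Your outline is sound and in fact overlaps substantially with the paper's proof, but the two directions are handled in somewhat swapped fashion, so let me compare.

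\textbf{Your Step 1 versus the paper's ``$\le$'' argument.} The cylinder construction you describe is precisely the paper's main computation, just phrased in the language of quasi-monomial valuations rather than graded ideal sequences. The paper carries it out by first picking a Koll\'ar component on the \emph{geometric generic} fibre, spreading it to a flat family $\cS$ of Koll\'ar components after an \'etale base change, and then forming the ideals $\cI_{m,s}=\fb_{ms}+\fb_{(m-1)s}t+\cdots+(t^m)$ with $\fb_p=\mu_*\cO_{\cY}(-p\cS)$; the flatness of this family is what makes the multiplicity computation $\e(\cI_{\bullet,s})=\vol_{\cX_b,p}(\cS_b)\,s^{n-1}$ go through, and the quasi-monomial combination of $\cY_b$ and $\cS$ bounds the lct. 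The optimization in $s$ is literally your optimization in $\lambda$. Your phrasing ``on a model of $\cX$ on which $\cX_b$ has smooth strict transform and whose restriction over $\cX_b$ dominates the extraction of $S_0$'' does not quite pin down the second divisor needed for the quasi-monomial valuation; producing a divisor $D_2$ on such a model with $D_2\cap D_1$ equal to (the transform of) $S_0$ is exactly what the family extension supplies. So the obstacle you flagged is real, and the paper's resolution of it is the spreading-out step --- which is also why the paper invokes constructibility \cite{Xu19} and only claims the equality for general $b$.

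\textbf{Your Step 2 versus the paper's ``$\ge$'' argument.} Here you take a genuinely different route. The paper dispatches this direction in one line by citing the adjunction inequality for local volumes \cite[Theorem~1.7]{LZ18}, valid for every $b$. Your degeneration-to-normal-cone argument, combined with lower semicontinuity \cite{BL18a} and the product formula, is a legitimate alternative; the ``$\ge$'' half of the product formula via $\bG_m$-invariance and uniqueness of the minimizer does work (a $\bG_m$-invariant valuation on $Y\times\bA^1$ has $\bG_m$-invariant valuation ideals, hence is a cylinder valuation), though it is considerably heavier machinery than simply citing \cite{LZ18}. Note that if instead you justified the product ``$\ge$'' by adjunction, your Step~2 would become a detour through the same citation the paper uses directly.

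In short: your Step~1 is the paper's argument in different clothing, with the family extension being the missing technical ingredient; your Step~2 is a correct but more elaborate substitute for the one-line adjunction citation.
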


\begin{proof}
By the adjunction of local volumes \cite[Theorem 1.7]{LZ18}, we always have $\frac{\hvol(\sigma(b), \cX)}{n^n}\geq \frac{\hvol(\sigma(b), \cX_b)}{(n-1)^{n-1}}$ for any $b\in B$. Thus it suffices to show the reverse inequality for general $b\in B$.

Denote by $\bar{\eta}$ the geometric generic point of $B$. Then by \cite{Xu19} we know that there for a general point $b\in B$ we have $\hvol(\sigma(b),\cX_b)=\hvol(\sigma(\bar{\eta}), \cX_{\bar{\eta}})$. Let us fix an arbitrary $\epsilon>0$. By Theorem \ref{thm:lx-Kollarcomp}, there exists a Koll\'ar component $\cS_{\bar{\eta}}$ over $\sigma(\bar{\eta})\in \cX_{\bar{\eta}}$ such that 
\begin{equation}\label{eq:geom.gen.fiber}
\hvol_{\cX_{\bar{\eta}},\sigma(\bar{\eta})}(\cS_{\bar{\eta}})\leq \hvol(\sigma(\bar{\eta}), \cX_{\bar{\eta}})+\epsilon.
\end{equation}
Denote by $\mu_{\bar{\eta}}: \cY_{\bar{\eta}}\to \cX_{\bar{\eta}}$ the plt blow up extracting $\cS_{\bar{\eta}}$. Hence there exists an \'etale morphism $\widetilde{B}\to B$ such that $\mu_{\bar{\eta}}$ extends over $\cX\times_B \widetilde{B}$ which provides a flat family of Koll\'ar components (see \cite[Definition A.2]{LX19}). Since the local volume is preserved under \'etale morphism by Lemma \ref{lem:volsame}, we may replace $B$ by $\widetilde{B}$. Thus there is a birational morphism $\mu: \cY\to \cX$ which provides a flat family $\cS$ of Koll\'ar components over $\cX$ centered at $\sigma(B)$. Denote by $\fb_m:= \mu_*\cO_{\cY}(-m\cS)$. Since $-\cS$ is a $\mu$-ample $\bQ$-Cartier divisor, we know that $\fb_m$ is a flat family of ideals over $B$ for $m\gg 1$ (i.e.\ $\cO_{\cX}/\fb_m$ is flat over $B$). After replacing $B$ with a dense open set, we may assume that $\fb_m$ is a flat family of ideals over $B$ for any $m\in \bZ_{\geq 0}$. For $p\in \bR_{\geq 0}$, we define $\fb_p:=\fb_{\lceil p\rceil}=\mu_*\cO_{\cY}(-\lceil p\cS\rceil)$. Let $\cS_b$ and $\fb_{p,b}$ be the restriction of $\cS$ and $\fb_p$ on $\cX_b$. By flatness of $\cS$ and $\fb_{\bullet}$ over $B$, we have $\fb_{p, b}=\fa_p(\cS_b)$ for any $b\in B$. 

Next, we estimate the local volume $\hvol(\sigma(b),\cX)$. Fix a general $b\in B$. Let $t\in \cO_{b,B}$ be its uniformizer. For any $s\in \bR_{>0}$, consider the ideal sequence $\cI_{\bullet, s}$ as
\[
\cI_{m,s}:=\fb_{ms}+\fb_{(m-1)s} t+ \fb_{(m-2)s} t^2 +\cdots + \fb_{s} t^{m-1}+ (t^m).
\]
By the definition of $\fb_p$ we know that $\cI_{\bullet,s}$ is a multiplicative ideal sequence of $\cO_{\cX,\sigma(b)}$ cosupported at $\sigma(b)$.  Then we know that $
\ell(\cO_{\cX,\sigma(b)}/\cI_{m,s})=\sum_{i=1}^m \ell(\cO_{\cX_b,\sigma(b)}/\fb_{is,b})$.
Since $\ell(\cO_{\cX_b,\sigma(b)}/\fb_{p,b})=\frac{1}{(n-1)!}\vol_{\cX_b,\sigma(b)}(\cS_b) p^{n-1}+ O(p^{n-2})$, we know that 
\[
\ell(\cO_{\cX,\sigma(b)}/\cI_{m,s})=\frac{1}{n!}\vol_{\cX_b,\sigma(b)}(\cS_b) s^{n-1} m^n +O(m^{n-1}).
\]
This implies that 
\begin{equation}\label{eq:mult-est}
\e(\cI_{\bullet, s})=\vol_{\cX_b,\sigma(b)}(\cS_b) s^{n-1}.
\end{equation} 
Let $v_{s}$ be the valuation of $\bC(\cX)$ as the quasi-monomial combination of $\cY_b$ and $\cS$ of weight $s$ and $1$, respectively. Then it is clear that $A_{\cX}(v_s) = s+  A_{\cX}(\cS)= s+ A_{\cX_b}(\cS_b)$ and $v_s(\cI_{m,s})\geq ms$. Hence we have
\begin{equation}\label{eq:lct-est}
\lct(\cX;\cI_{\bullet,s})\leq \frac{A_{\cX}(v_s)}{v_s(\cI_{\bullet,s})} \leq 1+ s^{-1} A_{\cX_b}(\cS_b).
\end{equation}
Combining Theorem \ref{thm:vol=lcte}, \eqref{eq:mult-est} and \eqref{eq:lct-est}, we obtain
\[
\hvol(\sigma(b), \cX)\leq \lct(\cX;\cI_{\bullet,s})^n\cdot \e(\cI_{\bullet, s})\leq (1+ s^{-1} A_{\cX_b}(\cS_b))^n\cdot \vol_{\cX_b,\sigma(b)}(\cS_b) s^{n-1}.
\]
Since $s$ is arbitrary, we may choose $s=\frac{A_{\cX_b}(\cS_b)}{n-1}$ which minimizes the right-hand-side of the above inequality. Hence we have 
\[
\hvol(\sigma(b),\cX)\leq \frac{n^n}{(n-1)^{n-1}} \hvol_{\cX_b,\sigma(b)}(\cS_b)\leq \frac{n^n}{(n-1)^{n-1}}(\hvol(\sigma(\bar{\eta}), \cX_{\bar{\eta}})+\epsilon).
\]
Here the second inequality follows from \eqref{eq:geom.gen.fiber} and $\hvol_{\cX_b,\sigma(b)}(\cS_b)=\hvol_{\cX_{\bar{\eta}},\sigma(\bar{\eta})}(\cS_{\bar{\eta}})$ by flatness of $\cS$ and $\fb_\bullet$ over $B$. By \cite{Xu19} we know that $\hvol(\sigma(\bar{\eta}),\cX_{\bar{\eta}})=\hvol(\sigma(b),\cX_b)$ for general $b$. Hence by letting $\epsilon\to 0$, we prove the reverse inequality that $\frac{\hvol(\sigma(b),\cX)}{n^n}\leq \frac{\hvol(\sigma(b),\cX_b)}{(n-1)^{n-1}}$ for general $b\in B$. The proof is finished.
\end{proof}

\begin{proof}[Proof of Theorem \ref{thm:bertini}] For simplicity, we assume that $X$ is affine. Let $C\subset X_{\sing}$ be an integral curve through $x$. Let $\pi: X\to \bA^1$ be a general linear projection. Then $\pi|_C: C\to \bA^1$ is quasi-finite. Let $y\in C$ be a general point. Let $Y$ be the fiber of $\pi$ containing $y$. Then after taking base change of $\pi$ to the normalization of $C$, Lemma \ref{lem:volsame} and Proposition \ref{prop:vol-fibration} implies that $\frac{\hvol(y,X)}{n^n}=\frac{\hvol(y,Y)}{(n-1)^{n-1}}$.
Since $y\in C$ is general, we have $\hvol(x,X)\leq \hvol(y,X)$ by \cite{BL18a}. This finishes the proof.
\end{proof}

\begin{cor}\label{cor:largesing}
Let $x\in X$ be a non-smooth klt singularity of dimension $n\geq 3$. Assume that $\dim_x X_{\sing}\geq n-3$. Then $\hvol(x,X)/n^n\leq 16/27$. In particular, Conjecture \ref{conj:ODP} holds for $x\in X$. 
\end{cor}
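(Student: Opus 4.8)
The plan is to peel off general hyperplane sections one at a time via Theorem~\ref{thm:bertini} until the dimension drops to $3$, and then invoke the already–known ODP Gap Conjecture in dimension $\le 3$ (Theorem~\ref{thm:odp-lowdim}(1)). When $n=3$ there is nothing to do beyond citing Theorem~\ref{thm:odp-lowdim}(1): the hypothesis $\dim_x X_{\sing}\ge 0$ is automatic, a non-smooth klt threefold singularity satisfies $\hvol(x,X)\le 2(3-1)^3=16$, i.e.\ $\hvol(x,X)/3^3\le 16/27$, and the full statement of Conjecture~\ref{conj:ODP} (including the equality characterization) holds in dimension $3$.

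For $n\ge 4$ I would induct on the dimension, the inductive statement being: if $x\in X$ is a non-smooth klt singularity of dimension $d\ge 3$ with $\dim_x X_{\sing}\ge d-3$, then $\hvol(x,X)/d^d\le 16/27$. Since $d\ge 4$ forces $\dim_x X_{\sing}\ge 1$, i.e.\ $x\in X$ is non-isolated, Theorem~\ref{thm:bertini} produces a non-smooth $(d-1)$-dimensional klt singularity $y\in Y$, with $Y$ a general hyperplane section of $X$, such that $\hvol(x,X)/d^d\le \hvol(y,Y)/(d-1)^{d-1}$. The one point that needs care — and the step I expect to be the main obstacle — is that the singular-locus hypothesis must be inherited by $y\in Y$. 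To arrange this I would run the construction in the proof of Theorem~\ref{thm:bertini} with the auxiliary curve $C$ chosen inside an irreducible component $Z\ni x$ of $X_{\sing}$ with $\dim Z=\dim_x X_{\sing}\ge d-3$; then $Y_{\sing}\supseteq X_{\sing}\cap Y\supseteq Z\cap Y$, and $Z\cap Y$ is a fibre of the restriction to $Z$ of the general linear projection — dominant onto $\bA^1$ because it contains the curve $C$, which is quasi-finite over $\bA^1$ — so $Z\cap Y$ has local dimension $\ge \dim Z-1\ge (d-1)-3$ at the chosen general point $y$. With this, the induction goes through; after $n-3$ steps one reaches a non-smooth klt threefold singularity whose normalized volume is $\le 16$ by Theorem~\ref{thm:odp-lowdim}(1), and chaining the inequalities gives $\hvol(x,X)/n^n\le 16/27$.

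For the ``in particular'' clause, it remains to note that $\tfrac{16}{27}n^n\le 2(n-1)^n$ for all $n\ge 3$; this is equivalent to $\big(\tfrac{n}{n-1}\big)^n\le \tfrac{27}{8}$, which holds since $n\mapsto\big(1+\tfrac1{n-1}\big)^n$ is decreasing and equals $\tfrac{27}{8}$ at $n=3$, strictly once $n\ge 4$. Hence $\hvol(x,X)\le 2(n-1)^n$, the inequality in Conjecture~\ref{conj:ODP}. For $n=3$ the equality characterization is part of Theorem~\ref{thm:odp-lowdim}(1); for $n\ge 4$ the inequality above is strict, so equality in Conjecture~\ref{conj:ODP} cannot occur, which is consistent because $x\in X$ is not an ordinary double point — an ODP of dimension $\ge 2$ is an isolated singularity, incompatible with $\dim_x X_{\sing}\ge n-3\ge 1$. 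Apart from the propagation of the singular-locus condition flagged above, everything is bookkeeping with the volume ratios and this elementary inequality.
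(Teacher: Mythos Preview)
Your proposal is correct and follows essentially the same route as the paper: induction on $n$ using the proof of Theorem~\ref{thm:bertini} with the auxiliary curve $C$ chosen inside an irreducible component $V$ of $X_{\sing}$ of maximal local dimension, so that $\dim_y Y_{\sing}\ge \dim_y(V\cap Y)\ge \dim V-1\ge (n-1)-3$ and the hypothesis propagates. Your treatment of the ``in particular'' clause is in fact slightly more complete than the paper's, since you also spell out why equality in Conjecture~\ref{conj:ODP} cannot occur when $n\ge 4$ (an ODP is isolated, contradicting $\dim_x X_{\sing}\ge 1$), whereas the paper simply notes the strict inequality $\tfrac{16}{27}n^n<2(n-1)^n$ for $n\ge 4$.
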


\begin{proof}
 We focus on the first inequality $\hvol(x,X)/n^n\leq 16/27$, as the  statement on Conjecture \ref{conj:ODP} is a consequence of this inequality by a simple computation
 $ \frac{16}{27}n^n < 2(n-1)^n$ whenever $n\geq 4$, and the conjecture holds in dimension $3$ by Theorem \ref{thm:odp-lowdim}(1).

 We do induction on $n\geq 3$. When $n=3$, the first inequality is precisely Theorem \ref{thm:odp-lowdim}(1). Assume that the first inequality is true in dimension $n-1$ with $n\geq 4$. Let $x\in X$ be a non-smooth klt singularity of dimension $n$ with $\dim_x X_{\sing}\geq n-3$. Let $V$ be an irreducible component of $X_{\sing}$ such that $x\in V$ and $\dim V\geq n-3$. Let $C\subset V$ be an integral curve through $x$. Then the proof of Theorem \ref{thm:bertini} implies that there exists  a hyperplane section $Y\subset X$ and a closed point $y\in Y\cap C$ such that $\frac{\hvol(x,X)}{n^n}\leq \frac{\hvol(y,Y)}{(n-1)^{n-1}}$. Furthermore, since $Y_{\sing}\supseteq X_{\sing}\cap Y\supseteq V\cap Y$, we know that 
 \[
 \dim_y Y_{\sing}\geq \dim_y (V\cap Y)\geq \dim_y V -1 \geq n-4.
 \]
 By induction hypothesis, we have $\frac{\hvol(y,Y)}{(n-1)^{n-1}}\leq \frac{16}{27}$. Hence we have
 \[
 \frac{\hvol(x,X)}{n^n}\leq \frac{\hvol(y,Y)}{(n-1)^{n-1}}\leq \frac{16}{27}.
 \]
 Thus the proof is finished.
\end{proof}

\section{Local-to-global volume estimates}

In order to prove our main result Theorem \ref{thm:K=GIT}, we follow the strategy from \cite{SS17, LX19}, that is, to show that any K-semistable $\bQ$-Gorenstein limits of cubic hypersurfaces is again a cubic hypersurface. The following result on K-semistable degeneration of higher dimensional cubic hypersurfaces is an easy consequence of arguments therein.

\begin{thm}\label{thm:LX-constrain}
Let $n\geq 4$ be an integer.
Let $\cX\to B$ be a K-semistable $\bQ$-Fano family over a smooth pointed curve $0\in B$ such that over $B^\circ:=B\setminus \{0\}$ it is a smooth family of cubic $n$-folds. Then there exists a $\bQ$-Cartier integral Weil divisor class $L$ on $X:=\cX_0$ such that the following properties hold.
\begin{enumerate}
    \item $\cO_X(mL)$ is Cohen-Macaulay for any $m\in\bZ$.
    \item $-K_X\sim (n-1) L$ and $(L^n)=3$.
    \item $h^i(X,\cO_X(mL))=h^i(\cX_b,\cO_{\cX_b}(m))$ for any $m\in\bZ$, $i\geq 0$, and $b\in B^\circ$. Moreover, $h^0(X,\cO_X(L))=n+2$, and $h^j(X,\cO_X(mL))=0$ for any $m\in\bZ$ and $1\leq j\leq n-1$.
    \item Any $\bQ$-Cartier Weil divisor $D$ on $X$ satisfies that $2D$ is Cartier. In particular, $2L$ is Cartier.
\end{enumerate}
\end{thm}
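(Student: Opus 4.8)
The plan is to run the standard moduli-continuity bookkeeping from \cite{SS17, LX19}, adapted to dimension $n\geq 4$, and then extract item (4) — the genuinely new conclusion — from the Bertini-type volume estimate (Corollary \ref{cor:largesing}) together with the finite degree formula (Theorem \ref{thm:vol-finitedeg}). First I would produce the divisor class $L$: the hyperplane bundle $\cO_{\cX_b}(1)$ on the cubic $n$-folds over $B^\circ$ extends, after possibly shrinking and normalizing, to a reflexive rank-one sheaf on the total space $\cX$ whose restriction $\cO_X(L)$ to the central fiber $X=\cX_0$ is the sought-after $\bQ$-Cartier Weil divisor class; one checks it is $\bQ$-Cartier because $-K_X$ is $\bQ$-Cartier ample and the relation $-K_{\cX_b}\sim (n-1)\cO_{\cX_b}(1)$ specializes. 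Item (2), $-K_X\sim (n-1)L$ and $(L^n)=3$, then follows by flatness and the fact that intersection numbers are locally constant in flat families; item (1), that $\cO_X(mL)$ is Cohen–Macaulay, and item (3), the cohomology-matching statement (in particular $h^0(X,\cO_X(L))=n+2$ and the vanishing of intermediate cohomology), follow from semicontinuity plus the explicit cohomology of cubic hypersurfaces in $\bP^{n+1}$, exactly as in \cite[\S2]{LX19} — here one uses that $X$ is klt hence has rational (indeed Cohen–Macaulay) singularities, and that $L$ extends to a line bundle on $\cX\setminus Z$ for $Z$ of codimension $\geq 2$, so Serre's conditions propagate.

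The substantive part is (4): every $\bQ$-Cartier Weil divisor $D$ on $X$ has $2D$ Cartier. Equivalently, the local class group of $X$ at every point has exponent dividing $2$; it suffices to show that at each point $x\in X$ the Weil divisor class group of $\widehat{\cO_{X,x}}$ is $2$-torsion, and since any $\bQ$-Cartier Weil divisor is Cartier away from a closed subset of codimension $\geq 2$, by induction it is enough to treat points where the Gorenstein index (or the local class group order) could a priori be large. The key input is the local volume bound: because $X$ is a K-semistable $\bQ$-Fano of dimension $n\geq 4$ with $(-K_X)^n = (n-1)^n\cdot 3$, Theorem \ref{thm:liu18} gives $\hvol(x,X)\geq \frac{(-K_X)^n}{(1+1/n)^n} = \frac{3(n-1)^n}{(1+1/n)^n} > 2(n-1)^n$ for $n\geq 4$ (a direct check: $3 > (1+1/n)^n$ fails for large $n$, so one must instead use the sharper comparison below). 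More carefully, one combines Theorem \ref{thm:liu18} with the finite degree formula: if $L$ were not Cartier at $x$ with local index $m$, passing to the cyclic quasi-\'etale index-$1$ cover $\tilde x\in\tX$ multiplies the local volume by $\deg$, giving $\hvol(\tilde x,\tX)=m\cdot\hvol(x,X)$, and this together with the universal bound $\hvol(\tilde x,\tX)\leq (n-1)^n\cdot 3 \cdot (1+1/n)^n / \dots$ and Corollary \ref{cor:largesing} / Conjecture \ref{conj:ODP} in the relevant dimensions forces $m\leq 2$.

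Concretely, I would argue: let $D$ be $\bQ$-Cartier Weil, $x$ a point of the (codimension $\geq 2$) non-Cartier locus of $D$, and $m$ the order of $D$ in the local class group at $x$. Take the index-$1$ cyclic cover $\tau\colon(\tilde x\in\tX)\to(x\in X)$, which is quasi-\'etale of degree $m$ and has klt (even $\bQ$-Gorenstein) singularities. If the non-smooth locus of $\tX$ through $\tilde x$ has dimension $\geq n-3$, Corollary \ref{cor:largesing} gives $\hvol(\tilde x,\tX)\leq \frac{16}{27}n^n$; otherwise $X$ is ``mildly singular'' along a small locus and a separate, easier argument (the hyperplane-section/local-complete-intersection analysis invoked later in the paper, or Theorem \ref{thm:odp-lowdim}) applies. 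Either way one gets $\hvol(\tilde x,\tX) < 2(n-1)^n\cdot m$ unless $m=1$... and combined with $\hvol(\tilde x,\tX) = m\,\hvol(x,X) \geq m\cdot \frac{(-K_X)^n}{(1+1/n)^n} = m\cdot\frac{3(n-1)^n}{(1+1/n)^n}$ from Theorem \ref{thm:liu18}, the inequality $m\cdot\frac{3(n-1)^n}{(1+1/n)^n}\leq \frac{16}{27}n^n$ (or $\leq 2(n-1)^n m$, whichever is the operative bound) pins down $m\leq 2$ for all $n\geq 4$ by an elementary estimate on $(1+1/n)^n<e<2.72$. Taking $D=L$ gives the ``in particular'' clause. \textbf{The main obstacle} I anticipate is the dichotomy in the last step: ruling out large local index when the singular locus of the index-$1$ cover is \emph{small} (isolated, or of dimension $< n-3$), where Corollary \ref{cor:largesing} does not apply and one must instead use a more hands-on bound — essentially the $\hvol(x,X)\leq (n+1-\ord_x f)^n\ord_x f$ hypersurface estimate from \cite[Lemma 3.1]{LX19} after reducing $\tX$ to a hypersurface or complete-intersection singularity, which is precisely the circle of ideas packaged in Theorem \ref{thm:lci}. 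Getting the numerics to close uniformly in $n$ — i.e., verifying $m\cdot\frac{3(n-1)^n}{(1+1/n)^n} > 2(n-1)^n\cdot 2$ fails, forcing $m\leq 2$, for every $n\geq 4$ — is routine once the structural reduction is in place.
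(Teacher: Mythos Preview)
Your treatment of parts (1)--(3) is broadly on the right track, though the paper's argument is more specific than ``semicontinuity plus explicit cohomology'': it uses that $\cO_{\cX}(m\cL)$ is Cohen--Macaulay by \cite[Corollary 5.25]{KM98} (since $\cX$ is klt), so that $\cO_X(mL)\cong \cO_{\cX}(m\cL)\otimes\cO_X$ is Cohen--Macaulay, and then derives the cohomology matching from Kawamata--Viehweg vanishing for $m \geq 2-n$ together with Serre duality for Cohen--Macaulay sheaves for $m \leq -1$, rather than from semicontinuity alone.

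Your argument for (4), however, is substantially overcomplicated and manufactures an obstacle that is not there. You invoke Corollary~\ref{cor:largesing} (the bound $\hvol \leq \frac{16}{27}n^n$), which only applies when the singular locus of the index-$1$ cover has dimension $\geq n-3$; this forces you into a dichotomy whose ``small singular locus'' branch you then propose to handle via Theorem~\ref{thm:lci} --- but at this stage there is no reason whatsoever for the index-$1$ cover to be a local complete intersection, so that route does not close. The paper's proof sidesteps all of this by using the much simpler \emph{unconditional} upper bound $\hvol(\tilde{x},\tX) \leq n^n$ from Theorem~\ref{thm:odp-lowdim}(2). Combined with the finite degree formula (Theorem~\ref{thm:vol-finitedeg}) and the local-to-global comparison (Theorem~\ref{thm:liu18}), this gives directly
\[
\ind(x,D)\cdot \frac{3\, n^n (n-1)^n}{(n+1)^n} \;\leq\; \ind(x,D)\cdot \hvol(x,X) \;=\; \hvol(\tilde{x},\tX) \;\leq\; n^n,
\]
hence $\ind(x,D) \leq \frac{(n+1)^n}{3(n-1)^n} < 3$ for every $n \geq 4$, so $\ind(x,D) \leq 2$. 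No dichotomy, no appeal to the ODP gap or to any lci structure, is needed here; those finer tools enter the paper only later, in Propositions~\ref{prop:non-Cartier-finite}--\ref{prop:K3-ci}, where the goal is to push the index down from $2$ to $1$.
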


\begin{proof}
Denote by $\cX^\circ:=\cX\setminus\cX_0$. By base change to a finite cover of $B$, we can find a hyperplane section $\cL^{\circ}\sim_B \cO_{\cX^{\circ}}(1)$ and taking Zariski closure yields a Weil divisor $\cL$ on $\cX$. It is clear that $-K_{\cX^\circ/B^\circ}\sim_{B^\circ} (n-1)\cL^\circ$. Since $\cX_0\sim_B 0$ is integral, we know that $-K_{\cX/B}\sim_{B} (n-1)\cL$ which implies that $\cL$ is $\bQ$-Cartier. By assumption $\cX$ has klt singularities, so \cite[Corollary 5.25]{KM98} implies that $\cO_{\cX}(m\cL)$ is Cohen-Macaulay for any $m\in\bZ$. Thus $L:=\cL|_{\cX_0}$ is a $\bQ$-Cartier Weil divisor satisfying that $\cO_{\cX_0}(mL)\cong \cO_{\cX}(mL)\otimes\cO_{\cX_0}$ is Cohen-Macaulay for any $m\in\bZ$, and $-K_{X}\sim (n-1)L$. The fact that $(L^n)=3$ comes from $(\cO_{\cX_b}(1)^n)=3$ and $\cL$ is $\bQ$-Cartier. Hence we have shown (1) and (2). 

For part (3), notice that $\cO_{\cX}(m\cL)$ is flat over $B$ for any $m\in\bZ$ whose fiber over $b$ and $0$ are $\cO_{\cX_b}(m)$ and  $\cO_{X}(mL)$ respectively. 
If $m\geq 2-n$, then $mL-K_X\sim (m+n-1)L$ and $m\cL_b - K_{\cX_b}\sim \cO_{\cX_b}(m+n-1)$ are both ample. Hence 
Kawamata-Viehweg vanishing implies that  $H^i(X,\cO_X(mL))=H^i(\cX_b, \cO_{\cX_b}(m))=0$ for any $i\geq 1$ and $m\geq 2-n$. By flatness of $\cO_{\cX}(m\cL)$, we know that $h^0(X,\cO_X(mL))=h^0(\cX_b, \cO_{\cX_b}(m))$ for any $m\geq 2-n$. On the other hand, by Serre duality for CM sheaves \cite[Theorem 5.71]{KM98}, we know that $H^i(X,\cO_X(mL))=H^i(\cX_b, \cO_{\cX_b}(m))=0$ for any $i\leq n-1$ and $h^n(X,\cO_X(mL))=h^n(\cX_b, \cO_{\cX_b}(m))$  whenever $-mL$ and $-m\cL_{b}$ are ample, i.e.\ $m\leq -1$. Thus part (3) is proven. 

For part (4), we use the local volume estimates. Let $x\in X$ be a point where $D$ is not Cartier. Denote by $\ind(x,D)$ the Cartier index of $D$ at $x$. Then by Theorems \ref{thm:vol-finitedeg}, \ref{thm:odp-lowdim}(2), and \ref{thm:liu18}, we know that 
\[
\frac{ 3 n^n (n-1)^n}{(n+1)^n}=\frac{n^n}{(n+1)^n}(-K_X)^n\leq \hvol(x,X)\leq \frac{n^n}{\ind(x,D)}.
\]
In particular, we get $\ind(x,D)\leq \frac{(n+1)^n}{3(n-1)^n}<3$. This implies that $\ind(x,D)=2$. 
\end{proof}

From now on, we restrict our focus to cubic fourfolds, i.e.\ $n=4$. We will always denote by $X$ a K-semistable $\bQ$-Fano variety admitting a $\bQ$-Gorenstein smoothing to cubic fourfolds. By \cite{SS17, LX19}, to prove Theorem \ref{thm:K=GIT} the main challenge is to show that $L$ is Cartier on $X$. This would follow from Conjecture \ref{conj:ODP} in dimension $4$ as indicated by Proposition \ref{prop:odp-violate}. However, currently we are unable to confirm Conjecture \ref{conj:ODP} for $n\geq 4$. In the below, we  provide some partial results using the local volume estimates from Section \ref{sec:k-vol}. We will study the global geometry of $(X,L)$ in Section \ref{sec:kawamata}.

\begin{prop}\label{prop:non-Cartier-finite}
With the above notation, we have that $L$ is Cartier away from a finite subset $\Sigma\subset X$. Moreover, any $x\in \Sigma$ is an isolated singularity of $X$.
\end{prop}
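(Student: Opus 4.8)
The plan is to combine the global volume bound for $X$ with the two Bertini-type inputs from Section \ref{sec:k-vol}: the dimension drop in the index-$1$ cover forced by Theorem \ref{thm:vol-finitedeg} together with Theorem \ref{thm:liu18}, and Corollary \ref{cor:largesing} which already handles singular loci of dimension $\geq n-3 = 1$. Recall from Theorem \ref{thm:LX-constrain} that $n=4$, $(-K_X)^4 = 3\cdot 3^4 = 243$, $-K_X\sim 3L$, and that $2L$ is already Cartier, so the non-Cartier locus of $L$ is exactly the set $\Sigma$ of points $x$ where $\ind(x,L)=2$. The point is to show $\Sigma$ is finite and consists of isolated singular points of $X$.

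First I would record the local constraint at a point $x\in\Sigma$. Let $\tau\colon(\tilde x\in\tilde X)\to (x\in X)$ be the index-$1$ cover of $(x\in X)$ with respect to $L$, a cyclic quasi-\'etale double cover, so $\deg\tau = 2$ and $\tilde X$ is klt with $L$ pulled back to a Cartier divisor. By Theorem \ref{thm:vol-finitedeg} we get $\hvol(\tilde x,\tilde X) = 2\,\hvol(x,X)$, and by Theorem \ref{thm:liu18} applied on $X$ together with Theorem \ref{thm:odp-lowdim}(2) applied on $\tilde X$ (note $\dim\tilde X = 4$) we obtain
\[
\frac{4^4}{5^4}\cdot 243 \leq \hvol(x,X) = \tfrac12\hvol(\tilde x,\tilde X)\leq \tfrac12\cdot 4^4 = 128.
\]
Since $\tfrac{243\cdot 256}{625} = \tfrac{62208}{625} > 99$, this is consistent, but the real leverage comes from feeding this same point into the Bertini machinery. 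Suppose $x\in\Sigma$ is not an isolated singular point of $X$; then $\dim_x X_{\sing}\geq 1 = n-3$, so Corollary \ref{cor:largesing} gives $\hvol(x,X)/4^4 \leq 16/27$, i.e.\ $\hvol(x,X)\leq \tfrac{16}{27}\cdot 256 = \tfrac{4096}{27} < 152$. But we also need a lower bound that contradicts this; the clean way is to pass to the cover: $\hvol(\tilde x,\tilde X) = 2\hvol(x,X)$, while the lower bound $\hvol(x,X)\geq \tfrac{243\cdot 256}{625}$ from Theorem \ref{thm:liu18} gives $\hvol(x,X)\geq \tfrac{62208}{625} \approx 99.5$, which is \emph{less} than $\tfrac{4096}{27}$, so this alone is not yet a contradiction. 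The resolution is that at a point where $L$ is non-Cartier we have the extra factor $2$: applying Theorem \ref{thm:liu18} on $X$ but Theorem \ref{thm:odp-lowdim}(2) on the double cover $\tilde X$ yields $\ind(x,L)\cdot\tfrac{243\cdot 256}{625}\leq \hvol(\tilde x,\tilde X)\leq 256$ is already near the edge, and combining with the singular-locus bound $\hvol(x,X)\leq\tfrac{16}{27}\cdot 4^4$ we get $2\cdot\tfrac{243\cdot 256}{625}\leq 2\hvol(x,X)=\hvol(\tilde x,\tilde X)$, hence $\tfrac{2\cdot 243\cdot 256}{625}\leq \tfrac{32}{27}\cdot 256$, i.e.\ $\tfrac{486}{625}\leq\tfrac{32}{27}$, which is true — so I would instead sharpen by using the Bertini reduction \emph{on $\tilde X$}: a non-isolated singularity at $x$ lifts to $\tilde x\in\tilde X_{\sing}$ of dimension $\geq 1$, so Corollary \ref{cor:largesing} gives $\hvol(\tilde x,\tilde X)\leq\tfrac{16}{27}\cdot 4^4 = \tfrac{4096}{27}$, whence $\hvol(x,X)\leq\tfrac{2048}{27} < 76$; this contradicts $\hvol(x,X)\geq\tfrac{62208}{625} > 99$. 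That contradiction shows every $x\in\Sigma$ is an isolated singular point of $X$.

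Finally, finiteness of $\Sigma$: since $X$ has isolated singularities at every point of $\Sigma$ (the non-Cartier locus of $L$ being closed and contained in $X_{\sing}$), and the non-Cartier locus is a closed subscheme of $X$, it remains to rule out a positive-dimensional component of non-Cartier points. But such a component would be contained in $X_{\sing}$ and positive-dimensional, contradicting what we just proved; hence $\Sigma$ is finite. The main obstacle I anticipate is getting the numerics to actually close — one has to be careful to apply Theorem \ref{thm:liu18} on $X$ and Theorem \ref{thm:odp-lowdim}(2) (or Corollary \ref{cor:largesing}) on the index-$1$ cover $\tilde X$, and to verify that the index-$1$ cover is genuinely quasi-\'etale (so that Theorem \ref{thm:vol-finitedeg} applies) and still klt; the inequality $2\cdot\tfrac{243\cdot 256}{625} > \tfrac{16}{27}\cdot 256$, i.e.\ $\tfrac{486}{625} > \tfrac{16}{27}$, equivalently $486\cdot 27 = 13122 > 16\cdot 625 = 10000$, is what makes the argument work, so the bookkeeping is the crux.
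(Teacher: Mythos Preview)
Your numerics are right, and the key inequality $2\cdot 3^5/5^4 > 16/27$ is exactly what the paper uses. But there is a genuine gap in the step ``a non-isolated singularity at $x$ lifts to $\tilde x\in\tilde X_{\sing}$ of dimension $\geq 1$.'' This is not automatic, and in fact it can fail. If the curve $C\subset X_{\sing}$ passing through $x$ lies entirely inside $\Sigma$, then the index-$1$ cover $\tau$ is branched along $C$, and the cover can resolve the singularities: for example, if $X$ has transverse $\tfrac{1}{2}(1,1,1,0)$ quotient singularities along $C$, then $\tilde X$ is smooth along $\tau^{-1}(C)$. So Corollary~\ref{cor:largesing} does not apply to $\tilde X$ in this case, and your contradiction evaporates. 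Your final paragraph then becomes circular: you deduce that $\Sigma$ is finite from the isolated-singularity claim, but the isolated-singularity argument implicitly used that the singular curve through $x$ is not contained in $\Sigma$.

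The paper's proof is structured precisely to address this. It first assumes $\Sigma$ contains a curve $C$ and passes to the index-$1$ cover. If $\tilde X$ is singular along $\tilde C$, your inequality gives the contradiction. If $\tilde X$ is smooth along $\tilde C$, then a general point $x'\in C$ is a cyclic quotient singularity of type $\tfrac{1}{2}(1,1,1,0)$ or $\tfrac{1}{2}(1,1,0,0)$, and these are ruled out separately: the first by Schlessinger rigidity (taking a general hyperplane section of the family $\cX\to B$ through $x'$ produces an isolated $\tfrac{1}{2}(1,1,1)$ singularity with a smoothing, impossible), the second by local Grothendieck--Lefschetz (restricting $\cL$ to a general codimension-$2$ slice, the local class group is torsion-free, forcing $L$ Cartier at $x'$). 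Only after $\Sigma$ is known to be finite does the paper run your lifting argument to get isolated singularities, and at that point the singular curve $C'$ through $x$ necessarily has generic point outside $\Sigma$, so $\tau$ is \'etale there and $\tilde C'\subset\tilde X_{\sing}$ genuinely. You are missing this deformation-theoretic/local-Lefschetz input, which uses in an essential way that $X$ is a $\bQ$-Gorenstein limit of smooth varieties.
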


\begin{proof}
Let $\Sigma$ be the closed subset of $X$ where $L$ is not Cartier. Clearly, we have $\Sigma\subset X_{\sing}$. First of all, assume to the contrary that $\Sigma$ contains a curve $C$. Let $x\in C$ be a point.  Let $\tau: (\tilde{x}\in \tX)\to (x\in X)$ be the index $1$ cover with respect to $L$. Let $\tC:=\pi^{-1}(C)$.  Then by finite degree formula we know that $\hvol(\tilde{x}',\tX)/4^4\geq 2\cdot 3^5/5^4$ for any $\tilde{x}'\in \tC$. If $\tX$ is singular along $\tC$, then by Corollary \ref{cor:largesing} we know that $\hvol(\tilde{x}',\tX)/4^4\leq \frac{16}{27}$. This is a contradiction since $2\cdot 3^5/5^4>16/27$. Hence $\tX$ is smooth at the generic point of $\tC$. This implies that $(x'\in X)$ is a quotient singularity of order $2$ for a general point $x'\in C$. Since $C$ is contained in the ramification locus of $\tau$, we know that $(x'\in X)$ has type $\frac{1}{2}(1,1,1,0)$ or $\frac{1}{2}(1,1,0,0)$. 

Next, we will show that neither quotient type is possible. The argument is similar to \cite[Proof of Lemma 3.16]{LX19}. If $(x'\in X)$ has type $\frac{1}{2}(1,1,1,0)$, we pick a general hyperplane section $\cH$ through $x'$ of $\cX$ embedded in some projective space. Then clearly $\cH_0=\cH\cap X$ has a quotient singularity of type $\frac{1}{2}(1,1,1)$, while $\cH_b=\cH\cap \cX_b$ is smooth for $b\in B^\circ$. This contradicts the rigidity theorem of Schlessinger \cite{Sch71}. If $(x'\in X)$ has type $\frac{1}{2}(1,1,0,0)$, then we know that $X$ has hypersurface singularities near $x'$, and so does $\cX$. We pick two general hyperplane sections $\cH_1$ and $\cH_2$ through $x'$ of $\cX$. Then clearly $(x'\in \cH_1\cap \cH_2)$ is a normal isolated hypersurface singularity of dimension $3$. Since $\cO_{\cX}(\cL)$ is Cohen-Macaulay, there is a well-defined $\bQ$-Cartier Weil divisor class $\cL|_{\cH_1\cap\cH_2}$ such that $\cO_{\cH_1\cap\cH_2}(\cL|_{\cH_1\cap\cH_2})\cong \cO_{\cX}(\cL)\otimes \cO_{\cH_1\cap\cH_2}$. By local Grothendieck-Lefschetz theorem \cite{Rob76}, the local class group of $(x' \in  \cH_1\cap\cH_2)$ is torsion free which implies that $\cL|_{\cH_1\cap\cH_2}$ is Cartier at $x'$. Hence this implies that $\cL$ is Cartier at $x'$, which implies $L$ is also Cartier at $x'$. This is a contradiction.

Finally, we show that $\Sigma$ consists only of isolated singularities. Assume to the contrary that $x\in \Sigma$ is not isolated. Let $C'\subset X_{\sing}$ be a curve through $x$. Again, let $(\tilde{x}\in \tX)$ be the index one cover of $(x\in X)$ with respect to $L$. Since $L$ is Cartier at the generic point of $C'$, we know that $\tC':=\pi^{-1}(C')$ is contained in $\tX_{\sing}$. Hence by Theorems  \ref{thm:vol-finitedeg}, \ref{thm:liu18}, and Corollary \ref{cor:largesing} we know that $2\cdot 3^5/5^4\leq\hvol(\tilde{x},\tX)/4^4\leq \frac{16}{27}$, again a contradiction.
\end{proof}

\begin{prop}\label{prop:odp-violate}
With the above notation, if $L$ is not Cartier at $x\in X$, then the index $1$ cover $\tilde{x}\in \tX$ violates  Conjecture \ref{conj:ODP}.
\end{prop}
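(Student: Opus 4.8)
The plan is to use K-semistability of $X$ to bound $\hvol(\tilde x,\tX)$ from below, well past the threshold appearing in Conjecture \ref{conj:ODP}, and then to check that $\tilde x\in\tX$ is genuinely a non-smooth klt singularity, so that the conjecture applies to it and is contradicted.

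First I would pin down the cover precisely. Since $L$ is not Cartier at $x$ while $2L$ is Cartier by Theorem \ref{thm:LX-constrain}(4), the Cartier index of $L$ at $x$ equals $2$, so the index $1$ cover $\tau\colon(\tilde x\in\tX)\to(x\in X)$ with respect to $L$ is a cyclic Galois cover of degree $2$. As $L$ is Cartier in codimension one, $\tau$ is quasi-\'etale, hence $\tX$ is again klt. The finite degree formula (Theorem \ref{thm:vol-finitedeg}) then gives $\hvol(\tilde x,\tX)=2\,\hvol(x,X)$.

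Next I would feed in the global estimate. By Theorem \ref{thm:LX-constrain}(2) we have $-K_X\sim 3L$ and $(L^4)=3$, so $(-K_X)^4=3^5=243$; since $X$ is K-semistable, Theorem \ref{thm:liu18} yields $\hvol(x,X)\ge (4/5)^4\cdot 243$. Combining with the previous step,
\[
\hvol(\tilde x,\tX)=2\,\hvol(x,X)\ \ge\ 2\cdot\frac{256}{625}\cdot 243\ =\ \frac{124416}{625}\ >\ 162\ =\ 2\cdot 3^4.
\]
So, as soon as $\tilde x\in\tX$ is known to be non-smooth, we obtain a non-smooth $4$-dimensional klt singularity whose local volume exceeds $2\cdot 3^4$, i.e.\ one violating Conjecture \ref{conj:ODP}.

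The only step requiring a genuine (if short) argument — and what I expect to be the crux — is ruling out that $\tX$ is smooth at $\tilde x$. If it were, then linearizing the deck $\bZ/2$-action would exhibit $x\in X$ analytically as a cyclic quotient singularity $\frac{1}{2}(a_1,a_2,a_3,a_4)$. It is genuinely singular — otherwise the Weil divisor $L$ would be Cartier at the smooth point $x$ — and it is an isolated singularity by Proposition \ref{prop:non-Cartier-finite}; the latter forces all four weights to be odd, i.e.\ the type is $\frac{1}{2}(1,1,1,1)$. But that is a Gorenstein singularity, so $K_X$ is Cartier at $x$, and then $-K_X\sim 3L$ together with $2L$ Cartier forces $L=3L-2L$ to be Cartier at $x$, a contradiction. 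This exclusion runs parallel to the quotient-singularity analysis already carried out in the proof of Proposition \ref{prop:non-Cartier-finite}. Hence $\tilde x\in\tX$ is a non-smooth klt singularity with $\hvol(\tilde x,\tX)>2\cdot 3^4$, which is exactly the assertion that it violates Conjecture \ref{conj:ODP}.
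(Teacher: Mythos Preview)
Your proof is correct and follows the paper's route: combine Theorem \ref{thm:liu18} with the finite degree formula (Theorem \ref{thm:vol-finitedeg}) to get $\hvol(\tilde x,\tX)\ge 2\cdot(4/5)^4\cdot 3^5>2\cdot 3^4$, then rule out smoothness of $\tX$ at $\tilde x$. The only difference is in that last step. The paper argues that if $\tX$ were smooth at $\tilde x$ then, since $\tau$ is ramified only over $\Sigma$ (finite by Proposition \ref{prop:non-Cartier-finite}), $(x\in X)$ would be an isolated order-$2$ quotient singularity admitting a $\bQ$-Gorenstein smoothing, contradicting Schlessinger's rigidity theorem \cite{Sch71}. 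You instead pin down the quotient type as $\frac{1}{2}(1,1,1,1)$ (isolatedness forces all weights odd), observe this is Gorenstein so $K_X$ is Cartier at $x$, and conclude $L\sim 3L-2L$ is Cartier there. Both arguments are short; yours is slightly more self-contained in that it avoids invoking \cite{Sch71} and does not use the global smoothing hypothesis on $X$, while the paper's is agnostic to the specific quotient type.
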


\begin{proof}
By Theorems \ref{thm:vol-finitedeg} and \ref{thm:liu18}, we have
\[
\frac{ 3^5\cdot  4^4 }{5^4}=\frac{4^4}{5^4}(-K_X)^4\leq \hvol(x,X) =  \frac{\hvol(\tilde{x},\tX)}{\ind(x,L)}.
\]
Since $2L$ is Cartier at $x\in X$ by Theorem \ref{thm:LX-constrain}, we know that $\ind(x,L)=2$. Hence 
\[
\hvol(\tilde{x},\tX)\geq \frac{2\cdot 3^5\cdot  4^4}{5^4}> 2 \cdot 3^4. 
\]
If $\tilde{x}\in \tX$ is smooth, then Proposition \ref{prop:non-Cartier-finite} implies that $\tau: \tX\to X$ is ramified only at $x$ which implies that $(x\in X)$ is an isolated quotient singularity of order $2$ admitting a $\bQ$-Gorenstein smoothing. This contradicts \cite{Sch71}. Hence  $\tilde{x}\in \tX$ violates Conjecture \ref{conj:ODP}. 
\end{proof}

\section{Ambro-Kawamata non-vanishing approach}\label{sec:kawamata}

In this section, we use the following non-vanishing theorem of Ambro \cite[Main Theorem]{Amb99} and Kawamata  \cite[Theorem 5.1]{Kaw00} to study the geometry of K-semistable $\bQ$-Gorenstein limits of cubic fourfolds.  

\begin{thm}[\cite{Amb99, Kaw00}]\label{thm:kawamata}
Let $(Y,\Delta)$ be a projective klt pair. Let $M$ be a nef Cartier divisor over $Y$ such that $M-K_Y-\Delta$ is nef and big. Assume that there exists a rational number $r>\dim(Y)-3\geq 0$ such that $-K_Y-\Delta\sim_{\bQ} rM$. Then $H^0(Y,M)\neq 0$, and for a general member $D\in |M|$ the pair $(Y,\Delta+D)$ is plt. 
\end{thm}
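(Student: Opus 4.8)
The plan is to derive the non-vanishing $H^{0}(Y,M)\neq 0$ from the shape of the Hilbert polynomial $m\mapsto\chi(Y,\cO_Y(mM))$ together with vanishing theorems, and then to get the plt conclusion by a Bertini-type argument that exploits the fact that the numerical hypothesis $r>\dim Y-3$ is preserved under restriction to a general member of $|M|$. For the first part I would begin by observing that $M$ is itself nef and big, since $(1+r)M\sim_{\bQ}M-K_Y-\Delta$ is nef and big with $1+r>0$; then for every integer $m>-r$ one has $mM-(K_Y+\Delta)\sim_{\bQ}(m+r)M$ nef and big, so Kawamata--Viehweg vanishing on the klt pair $(Y,\Delta)$ yields $H^{i}(Y,\cO_Y(mM))=0$ for all $i>0$. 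Hence for such $m$ the Euler characteristic $P(m):=\chi(Y,\cO_Y(mM))$ equals $h^{0}(Y,\cO_Y(mM))$; here $P$ is a polynomial in $m$ of degree $n:=\dim Y$ with positive leading coefficient $(M^{n})/n!$, and $P(0)=\chi(Y,\cO_Y)=h^{0}(\cO_Y)=1$.

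Next I would pin down several values of $P$: since $M$ is big and nef, $\cO_Y(mM)$ has no nonzero section for $m<0$, so $P(m)=0$ for every integer $m$ with $-r<m<0$. Because $r>n-3$, there are at least $n-3$ such integers, so $P(t)=\bigl(\prod_{j=1}^{k}(t+j)\bigr)\,Q(t)$ with $k\geq n-3$, $\deg Q=n-k\leq 3$, $Q(0)=1/k!>0$, and $Q$ of positive leading coefficient. If $\deg Q\leq 1$ then $Q(1)>0$ automatically, whence $h^{0}(Y,M)=P(1)>0$ and we are done. For $\deg Q\in\{2,3\}$ one needs additional relations among the coefficients of $Q$: when $-K_Y$ is an honest integral multiple of $M$ with $\Delta=0$ (the situation arising from index-one covers), Serre duality for the Cohen--Macaulay sheaves $\cO_Y(mM)$ gives the functional equation $P(t)=(-1)^{n}P(-t-r)$, which forces the odd part of $Q$ and leaves only manifestly positive terms at $t=1$; in the general case one combines the Riemann--Roch expansion, whose lower-order terms are controlled because $c_1(Y)=-K_Y$ is $\bQ$-proportional to $M$ up to $\Delta$, with an induction on $n$ whose base case $n=3$ is the classical effective non-vanishing for log Fano threefolds, reducing to it by cutting with general members of a base-point-free multiple $|aM|$.

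For the plt statement I would use that $M-(K_Y+\Delta)\sim_{\bQ}(1+r)M$ is nef and big, so the base-point-free theorem makes $M$ semiample; fix $a$ with $|aM|$ base-point free. A general $D\in|M|$ is Bertini-general away from $\mathrm{Bs}\,|M|$, hence normal and with $(Y,\Delta+D)$ plt there; near $\mathrm{Bs}\,|M|$, which lies in the locus contracted by $|aM|$, I would verify plt-ness by inversion of adjunction: restriction to a general $D$ gives $K_D+\Delta|_D=(K_Y+\Delta+D)|_D\sim_{\bQ}(1-r)M|_D$, so $-(K_D+\Delta|_D)\sim_{\bQ}(r-1)M|_D$ with $r-1>(n-1)-3$ and $M|_D-(K_D+\Delta|_D)\sim_{\bQ}rM|_D$ nef and big; that is, the same hypotheses hold for $(D,\Delta|_D,M|_D)$ in dimension $n-1$, so one can bootstrap normality and the plt/klt property up one dimension, the base case once more being dimension $3$.

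The hard part will be the $\deg Q\in\{2,3\}$ cases of the non-vanishing step --- equivalently the range where $r$ only slightly exceeds $n-3$ and $P$ is not determined by its obvious roots --- together with the behaviour of general members of $|M|$ near its base locus; these are exactly the points that occupy \cite{Amb99,Kaw00}.
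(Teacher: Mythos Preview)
The paper does not give a proof of this theorem; it is quoted from \cite{Amb99,Kaw00} and used as an input, so there is no ``paper's own proof'' to compare against.

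That said, your outline is in the spirit of the cited references: reducing non-vanishing to positivity of the Hilbert polynomial at $t=1$ via Kawamata--Viehweg vanishing, and obtaining the plt conclusion by induction on dimension through inversion of adjunction. Two comments. First, you correctly identify the crux as the cases $\deg Q\in\{2,3\}$, but your treatment there is not a proof: the ``Serre duality functional equation'' you invoke only applies when $\Delta=0$ and $r\in\bZ$, and the ``combine Riemann--Roch with induction'' sentence is precisely the content of \cite{Amb99,Kaw00} rather than an argument. In particular, Kawamata's approach uses his ladder theory and a careful analysis of how the log canonical threshold behaves along a minimal center, not just a naive cut-and-restrict. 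Second, in the plt step you assume the general $D\in|M|$ is normal and that $(D,\Delta|_D)$ makes sense and inherits the hypotheses; this is exactly what has to be proved near $\mathrm{Bs}\,|M|$, and your sketch assumes the conclusion. So your proposal is a reasonable roadmap, but it defers all the genuine difficulties to the references --- which is, in fact, what the paper itself does.
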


In the rest of this paper, we adapt the notation of Theorem \ref{thm:LX-constrain} and assume $n=4$. In particular, $X$ is a K-semistable $\bQ$-Gorenstein limit of cubic fourfolds, and $L$ is an ample $\bQ$-Cartier Weil divisor on $X$ such that $-K_X\sim 3L$. Denote by $\Sigma$ the non-Cartier locus of $L$ on $X$ which is a finite set by Proposition \ref{prop:non-Cartier-finite}. 

Next, we apply Ambro-Kawamata's non-vanishing theorem to our study on the geometry of linear systems $|2L|$ and $|L|$. Our goal is to show that $L$ is Cartier on $X$.

\begin{prop}\label{prop:2Ldisjoint}
Let $D_1,D_2$ be two general member of $|2L|$ on $X$. Then both $D_i~(i=1,2)$ and their complete intersection $S:=D_1\cap D_2$ are Gorenstein canonical. Moreover, $\Bs|2L|$ is disjoint from $\Sigma$.
\end{prop}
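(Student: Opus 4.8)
The plan is to apply the Ambro--Kawamata non-vanishing theorem (Theorem~\ref{thm:kawamata}) twice — once on $X$ with $M=2L$ and once on a general member $D_1\in|2L|$ — to obtain plt pairs that yield the normality and klt-ness of $D_1$, $D_2$ and $S$, and then to deduce the Gorenstein canonical statements from the disjointness $\Bs|2L|\cap\Sigma=\emptyset$. The latter I expect to prove in the stronger form that $|2L|$ is base point free, by a local-to-global volume comparison followed by a Nadel-vanishing lifting argument, and this is where the main difficulty lies.

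\emph{Base point freeness of $|2L|$.} Fix any $x\in X$ and, using Theorem~\ref{thm:lx-Kollarcomp} together with the bound $\hvol(x,X)\le 4^4$ from Theorem~\ref{thm:odp-lowdim}(2), choose a Koll\'ar component $E$ over $x\in X$ with $\hvol_{X,x}(E)<\hvol(x,X)+1\le 4^4+1$. For $m\gg 0$ the Weil divisor $4mL=(2m)\cdot(2L)$ is Cartier and ample (Theorem~\ref{thm:LX-constrain}), $|4mL|$ is base point free, and by asymptotic Riemann--Roch and $(L^4)=3$ one has $h^0(X,\cO_X(4mL))=32\,m^4+O(m^3)$, while imposing $\ord_E\ge A_X(E)m$ costs $\ell\big(\cO_{X,x}/\fa_{\lceil A_X(E)m\rceil}(E)\big)=\tfrac{\hvol_{X,x}(E)}{24}m^4+O(m^3)$ linear conditions. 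Since $32>\tfrac{4^4+1}{24}$, the subsystem $W\subseteq|4mL|$ of divisors with $\ord_E\ge A_X(E)m$ is nonempty for $m\gg 0$, and — because $4L$ is far more positive than is needed to absorb $E$ (namely $(4L)^4=768$ against $\hvol_{X,x}(E)\le 4^4+1$) — its base locus is exactly $\{x\}$. For a general $D'\in W$, $\Delta:=\tfrac1m D'$ then satisfies $\Delta\ge 0$, $\Delta\sim_{\bQ}4L$, $\ord_E(\Delta)\ge A_X(E)$, and, by the generic-member property of multiplier ideals, the non-klt locus of $(X,\Delta)$ is precisely $\{x\}$. Setting $c:=\lct(X;\Delta)\le\tfrac{A_X(E)}{\ord_E(\Delta)}\le 1$, the pair $(X,c\Delta)$ is log canonical, klt away from $x$, and not klt at $x$, so $\cO_X/\cJ(X,c\Delta)$ is a nonzero Artinian module supported at $x$; since $2L-K_X-c\Delta\sim_{\bQ}(5-4c)L$ is ample, Nadel vanishing gives $H^1\big(X,\cO_X(2L)\otimes\cJ(X,c\Delta)\big)=0$, so $H^0(X,\cO_X(2L))$ surjects onto $H^0\big(X,\cO_X(2L)\otimes\cO_X/\cJ(X,c\Delta)\big)$ and hence onto $\kappa(x)$. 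Thus a section of $\cO_X(2L)$ is nonzero at $x$, i.e. $x\notin\Bs|2L|$.

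\emph{Conclusion.} Apply Theorem~\ref{thm:kawamata} to $(X,0)$ with $M=2L$: this is legitimate since $2L$ is Cartier ample, $2L-K_X\sim 5L$ is ample, and $-K_X\sim 3L\sim_{\bQ}\tfrac32(2L)$ with $\tfrac32>\dim X-3=1$. Hence a general $D_1\in|2L|$ is a prime divisor with $(X,D_1)$ plt, and since $\Sigma$ is finite (Proposition~\ref{prop:non-Cartier-finite}) we may also take $D_1\cap\Sigma=\emptyset$. Then $L$, hence $K_X\sim-3L$, is Cartier near $D_1$, so $X$ is Gorenstein near $D_1$; as $D_1$ is Cartier, $\omega_{D_1}\cong\cO_{D_1}(-L|_{D_1})$ is invertible and, by inversion of adjunction, $D_1$ is klt — so $D_1$ is Gorenstein klt, i.e. Gorenstein canonical, and is a Gorenstein $\bQ$-Fano threefold with $-K_{D_1}\sim L|_{D_1}$. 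From $0\to\cO_X\to\cO_X(2L)\to\cO_{D_1}(2L|_{D_1})\to 0$ and $H^1(X,\cO_X)=0$ (Theorem~\ref{thm:LX-constrain}(3)) the restriction $H^0(X,\cO_X(2L))\to H^0(D_1,\cO_{D_1}(2L|_{D_1}))$ is surjective, so applying Theorem~\ref{thm:kawamata} on $D_1$ with $M=2L|_{D_1}$ — valid because $2L|_{D_1}-K_{D_1}\sim 3L|_{D_1}$ is ample and $-K_{D_1}\sim L|_{D_1}\sim_{\bQ}\tfrac12(2L|_{D_1})$ with $\tfrac12>\dim D_1-3=0$ — a general $D_2\in|2L|$ has $(D_1,D_2|_{D_1})$ plt, whence $S=D_1\cap D_2$ is normal. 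The same reasoning as for $D_1$ gives that $D_2$ is Gorenstein canonical and $D_2\cap\Sigma=\emptyset$, so $S$ is a Cartier divisor in the Gorenstein variety $D_1$ near $S$; thus $\omega_S\cong\cO_S(L|_S)$ is invertible and $S$ is klt by inversion of adjunction, i.e. Gorenstein canonical.

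The step I expect to be the main obstacle is the base point freeness argument, specifically the bookkeeping that the single inequality $\hvol(x,X)\le 4^4$ simultaneously (i) produces more sections of $|4mL|$ vanishing along the near-optimal Koll\'ar component $E$ than the number of imposed conditions and (ii) keeps the Nadel twist $5L-4cL$ ample, together with the verification that the resulting multiplier ideal is cosupported exactly at $x$ — equivalently, that $x$ is an isolated non-klt centre of $(X,c\Delta)$ — so that the lifted section of $\cO_X(2L)$ really does not vanish at $x$.
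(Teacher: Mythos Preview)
Your strategy diverges from the paper's and the gap you yourself flag is genuine.

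The paper does \emph{not} prove base point freeness of $|2L|$ here; that is established later in Proposition~\ref{prop:2LK3} via the K3 surface $G$ and Mayer's theorem. For the present proposition the paper argues as follows: Ambro--Kawamata gives that $D_i$ and $S$ are klt of Gorenstein index dividing $2$; now \emph{assume for contradiction} that $S$ is not Gorenstein at some $x$, so $x\in\Sigma$. On the index-$1$ cover $\tau:(\tilde{x}\in\tX)\to(x\in X)$, the preimage $\tS=\tau^{-1}(S)$ is a complete intersection in the Gorenstein germ $\tX$, hence Gorenstein, so $\tilde{x}\in\tS$ is Du Val and $\edim(\tilde{x},\tX)\le 5$, i.e.\ $\tilde{x}\in\tX$ is a \emph{hypersurface} singularity. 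Then Theorem~\ref{thm:vol-lci} says the ODP Gap Conjecture holds for it, contradicting Proposition~\ref{prop:odp-violate}. Thus $S$ is Gorenstein, so $\cO_S(L|_S)\cong\omega_S$ is invertible and $S\cap\Sigma=\emptyset$; since $\Bs|2L|\subset S$ this yields $\Bs|2L|\cap\Sigma=\emptyset$. The Gorenstein conclusion for $D_i$ then follows a posteriori.

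Your Nadel-vanishing route, by contrast, needs $(X,c\Delta)$ to have $x$ as an isolated non-klt centre, and this is exactly what is unjustified. Two issues compound: first, taking $c=\lct(X;\Delta)$ does not guarantee an lc place over $x$ at all --- if the threshold is achieved elsewhere (so $c<A_X(E)/\ord_E(\Delta)$), then $(X,c\Delta)$ may well be klt at $x$. Second, even with $c=A_X(E)/\ord_E(\Delta)\le 1$, the dimension count $32>\tfrac{257}{24}$ only produces sections in $W$; it gives no control on the singularities of a general $D'\in W$ away from $x$, and the assertion $\Bs W=\{x\}$ amounts to global generation of $\fa_{\lceil A_X(E)m\rceil}(E)\otimes\cO_X(4mL)$, which you do not establish. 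Making $x$ an isolated lc centre would require a tie-breaking or cutting-down argument (perturbing $\Delta$, applying connectedness, iterating down to a minimal centre and extending from it), and none of this is supplied. Without it, the lift $H^0(X,\cO_X(2L))\twoheadrightarrow\kappa(x)$ does not follow, and your entire ``Conclusion'' section --- which relies on choosing $D_i$ disjoint from $\Sigma$ --- collapses.
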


\begin{proof}
We first show that both $D_i$ and $S$ are klt of Gorenstein index at most $2$. By Theorem \ref{thm:LX-constrain}, we know that $2L$ is Cartier and ample. By applying Theorem \ref{thm:kawamata} to $2L$ on $X$, we see that $-K_X\sim_{\bQ}\frac{3}{2}(2L)$, so $r=\frac{3}{2}> 1=\dim(X)-3\geq 0$. So we have that $(X,D_i)$ is plt, hence $D_i$ is klt. Next, we apply Theorem \ref{thm:kawamata} to $2L|_{D_i}$ on $D_i$. By adjunction it is clear that $-K_{D_i}\sim L|_{D_i}$, so $r=\frac{1}{2}\geq 0$. Also, by Theorem \ref{thm:LX-constrain}(3) we have an exact sequence 
\[
H^0(X,2L)\to H^0(D_i,2L|_{D_i})\to H^1(X,\cO_X)=0.
\]
Hence the general divisor $D_2\in |2L|$ restricts to a general divisor $S\in |2L|_{D_1}|$. In particular, $S$ is also klt. By adjunction, we know that $K_S\sim L|_S$, so both $D_i$ and $S$ have Gorenstein index at most $2$.

Next we show that $S$ is Gorenstein. 
Assume to the contrary that $x\in S$ has Gorenstein index $2$. Then clearly $x\in \Sigma$. 
Let $\tau:(\tilde{x}\in \tX)\to (x\in X)$ be the index $1$ cover of $L$. Since $\tX$ is Gorenstein, the preimage $\tS:=\tau^{-1}(S)$ is also Gorenstein as it is a complete intersection in $\tX$. Thus $\tilde{x}\in \tS$ is a Du Val singularity. This implies that $\edim(\tilde{x},\tX)\leq 5$, i.e.\ $\tilde{x}\in \tX$ is a hypersurface singularity. But then the ODP conjecture holds for $\tilde{x}\in \tX$ by Theorem \ref{thm:lci},  and we get a contradiction to Proposition \ref{prop:odp-violate}. This shows that $S$ is Gorenstein. 

Since both $D_i$ and $S$ are Cohen-Macaulay, we know that $\cO_X(L)\otimes \cO_S\cong \cO_S(L|_S)\cong \omega_S$. In particular, this implies that $S\cap \Sigma=\emptyset$. It is clear that $\Bs|2L|\subset S$, so $\Bs|2L|\cap \Sigma=\emptyset$.
\end{proof}

\begin{prop}\label{prop:2LK3}
Let $D\in |2L|$ and $H\in |L|$ be general divisors on $X$. Let $G:= D\cap H$ be their complete intersection. Then $(G, L|_G)$ is a polarized K3 surface with Du Val singularities of degree $6$. Moreover, $|2L|$ is base point free, and the connected components of $\Bs|L|$ have dimension $0$ or $2$.
\end{prop}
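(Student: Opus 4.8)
The plan is to cut $X$ down to the promised K3 surface by applying Theorem~\ref{thm:kawamata} twice, and then to read off the two base-locus statements from the classification of linear systems on K3 surfaces. First take a general $D\in|2L|$: by Proposition~\ref{prop:2Ldisjoint} the pair $(X,D)$ is plt, $D$ is Gorenstein canonical, and $D\cap\Sigma=\emptyset$ (this uses $\Bs|2L|\cap\Sigma=\emptyset$ together with $\Sigma$ being finite), so $L|_D$ is an ample Cartier divisor on $D$ and by adjunction $-K_D\sim L|_D$. Apply Theorem~\ref{thm:kawamata} on $Y=D$ with $M=L|_D$, $\Delta=0$: here $M-K_D\sim 2L|_D$ is ample and $-K_D\sim_{\bQ}1\cdot M$ with $r=1>0=\dim D-3$, so $H^0(D,L|_D)\neq 0$ and a general $H'\in|L|_D|$ gives a plt pair $(D,H')$. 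The vanishing $H^1(X,\cO_X(-L))=0$ from Theorem~\ref{thm:LX-constrain}(3) makes $H^0(X,L)\to H^0(D,L|_D)$ surjective, so a general $H\in|L|$ restricts to such an $H'$, and $G:=D\cap H$ is a klt surface disjoint from $\Sigma$ on which $L|_G$ is ample and Cartier. By adjunction $\omega_G\cong\cO_G$ (legitimate because $L$, $D$, $H$ are Cartier near $G$), so $G$ is Gorenstein with canonical, hence Du Val, singularities; a short cohomology chase through $X\to D\to G$ using Theorem~\ref{thm:LX-constrain}(3) gives $H^0(G,\cO_G)=1$ and $H^1(G,\cO_G)=0$, so $G$ is an integral K3 surface with Du Val singularities, and $(L|_G)^2=(L^2\cdot 2L\cdot L)_X=2(L^4)=6$ (with $h^0(G,L|_G)=5$, i.e.\ genus $4$).

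For base-point-freeness of $|2L|$, observe that products of sections of $L$ lie in $H^0(X,2L)$, so $\Bs|2L|\subset\Bs|L|$; since $\Bs|L|$ is contained in every member of $|L|$ and $\Bs|2L|$ in every member of $|2L|$, we get $\Bs|2L|\subset D\cap H=G$. The restriction $H^0(X,2L)\to H^0(G,2L|_G)$ is surjective (again chasing the vanishing of Theorem~\ref{thm:LX-constrain}(3) through $X\to D\to G$), so $\Bs|2L|=\Bs|2L|_G|$. On the K3 surface $G$, working on the minimal resolution, the only obstruction \cite{May72} to base-point-freeness of $|2L|_G|$ is an irreducible curve $E$ with $E^2=0$ and $(2L|_G)\cdot E=1$; but $(2L|_G)\cdot E$ is even, so this cannot occur, and $|2L|$ is base point free.

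For $\Bs|L|$: since $2L$ is now base point free and ample, $\psi:=\phi_{|2L|}\colon X\to\bP^N$ is a finite morphism and a general $D\in|2L|$ equals $\psi^{-1}(\Lambda)$ for a general hyperplane $\Lambda\subset\bP^N$. Hence for any irreducible component $Z$ of $\Bs|L|$ with $\dim Z\geq 1$, the image $\psi(Z)$ is positive-dimensional and $Z\cap D=(\psi|_Z)^{-1}(\psi(Z)\cap\Lambda)$ is purely $(\dim Z-1)$-dimensional for general $D$. On the other hand, arguing exactly as for $|2L|$ (now $\Bs|L|\subset H$, the restriction maps $H^0(X,L)\to H^0(D,L|_D)\to H^0(G,L|_G)$ are surjective, and $\Bs|L|_D|\subset H'=G$) one obtains $\Bs|L|\cap D=\Bs|L|_G|$, which by \cite{May72} is empty or purely $1$-dimensional, a nef and big complete linear system on a K3 surface having no isolated base points. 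If $\Bs|L|_G|=\emptyset$, then every positive-dimensional component of $\Bs|L|$ would miss the ample divisor $D$, which is absurd, so $\Bs|L|$ is finite; if $\Bs|L|_G|$ is purely $1$-dimensional, then purity of $Z\cap D$ forces $\dim Z\in\{0,2\}$ for every component $Z$. In either case every connected component of $\Bs|L|$ has dimension $0$ or $2$.

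The cohomological bookkeeping with Theorem~\ref{thm:LX-constrain}(3) is routine. The two points needing care are that $G$ genuinely is a K3 surface with Du Val singularities --- which rests on $G$ being disjoint from $\Sigma$, hence ultimately on the conclusion $\Bs|2L|\cap\Sigma=\emptyset$ of Proposition~\ref{prop:2Ldisjoint} --- and the transfer of the K3 dichotomy for $\Bs|L|_G|$ back to a statement on $X$ through the finiteness of $\phi_{|2L|}$; this last generic-transversality argument is where I expect the main subtlety.
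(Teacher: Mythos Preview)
Your approach is essentially the paper's: apply Theorem~\ref{thm:kawamata} on $X$ and then on $D$, chase cohomology via Theorem~\ref{thm:LX-constrain}(3), and read off the base-locus statements from Mayer's dichotomy on the K3 surface $G$. The only organisational difference is that for the $\Bs|L|$ analysis you slice $\Bs|L|\subset X$ by the ample divisor $D$, whereas the paper first identifies $\Bs|L|=\Bs|L|_H|$ inside $H$ and then slices by $G\in|2L|_H|$; these are the same computation.

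There is, however, a genuine gap in your last step. From ``$Z\cap D$ is purely $(\dim Z-1)$-dimensional'' and ``$\Bs|L|\cap D=\Bs|L|_G|$ is purely $1$-dimensional'' you can only conclude $\dim Z\leq 2$, not $\dim Z\neq 1$. Indeed, if $Z$ were an irreducible curve, then $Z\cap D$ would be a finite nonempty set of points lying on the $\bP^1$; this is perfectly compatible with $\Bs|L|\cap D$ being purely $1$-dimensional, since those points are absorbed into the $\bP^1$ contributed by some $2$-dimensional component. Purity of the union says nothing about the individual pieces.

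The fix is to use that $\Bs|L|_G|$ is not just purely $1$-dimensional but \emph{connected} (Mayer gives a single $(-2)$-curve $\cong\bP^1$). If $W$ is a connected component of $\Bs|L|$ with $\dim W=1$, then $W\cap D$ is finite and nonempty for general $D$ (ampleness), and since $W$ is open and closed in $\Bs|L|$, the set $W\cap D$ is open and closed in $\Bs|L|\cap D=\Bs|L|_G|\cong\bP^1$. A nonempty open-closed subset of a connected set is the whole set, so $W\cap D=\bP^1$, contradicting finiteness. This is exactly the ``generic-transversality'' subtlety you flagged; the paper's own sentence at this point is equally terse, but this is the content behind it.
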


\begin{proof}
By Proposition \ref{prop:2Ldisjoint}, we know that $L|_D\sim -K_D$ is Cartier. Since $\cO_X(-L)$ and  $\cO_X(L)$ are both Cohen-Macaulay by Theorem \ref{thm:LX-constrain}(1), we know that $\cO_D(L|_D)\cong \cO_X(L)/\cO_X(-L)$. Hence we have an exact sequence $H^0(X,L)\to H^0(D,L|_D)\to H^1(X,-L)=0$ by Theorem \ref{thm:LX-constrain}(3). Hence $G$ is a general divisor in $|L|_D|$ which implies that $(D, G)$ is plt by Theorem \ref{thm:kawamata}. Since $G\sim L|_D$ is Cartier and $D$ is Gorenstein canonical, we know that $G$ has Gorenstein canonical singularities as well. By adjunction, $K_G\sim 0$. We claim that $H^1(G,\cO_G)=0$. Since $L|_D$ is Cartier, there is an exact sequence
\[
H^1(D, \cO_D)\to H^1(G,\cO_G)\to H^2(D, -L|_D). 
\]
Thus it suffices to show that both $H^1(D, \cO_D)$ and $H^2(D, -L|_D)$ vanish. Since $D$ is Gorenstein canonical, we have $\cO_D(mL|_D)\cong \cO_X(mL)\otimes \cO_D$. Hence we have the following exact sequences for $i=1,2$:
\[
0=H^i(X, mL)\to  H^i(D, mL|_D)\to H^{i+1}(X, (m-2)L)=0.
\]
Here we use Theorem \ref{thm:LX-constrain}(3). Thus both $H^1(D, \cO_D)$ and $H^2(D, -L|_D)$ vanish, and the claim follows. Hence $G$ is a K3 surface with Du Val singularities. The polarization $L|_G$ has degree $6$ since $(L|_G^2)=(L^2\cdot D\cdot H)=2(L^4)=6$. 

Since $\Bs|2L|\subset \Bs|L|$, we know that $\Bs|2L|\subset G$. Moreover, we can show that $H^0(X,2L)\to H^0(G, 2L|_G)$ is surjective by tracing exact sequences as below and using Theorem \ref{thm:LX-constrain}(3):
\[
H^0(X,2L)\to H^0(D, 2L|_D)\to H^1(X, \cO_X)=0,
\]
\[
H^0(D, 2L|_D)\to H^0(G, 2L|_G) \to H^1(D, L|_D)=0.
\]
Hence $\Bs|2L|=\Bs |2L|_G|$. By classical result on linear system of K3 surfaces (see \cite[Remark 3.4]{huybrechts}), we know that $2L|_G$ is always base point free. Hence $|2L|$ is also base point free.

By similar argument on tracing exact sequences, we know that $H^0(X,2L)\to H^0(H,2L|_H)$ is surjective. Since $G$ is a general divisor of the ample base point free linear system $|2L|_H|$ on $H$, we know that $H$ is integral which implies that $|L|$ has no fixed component. By tracing exact sequences as above, we know that $H^0(X,L)\to H^0(H, L|_H)\to H^0(G,L|_G)$ are both surjective. By Mayer's theorem \cite{May72} (see also \cite[Corollary 3.15]{huybrechts}), we know that $L|_G$ is either base point free or has a fixed component isomorphic to $\bP^1$. Thus any connected component of $\Bs|L|=\Bs|L|_H|$ is either an isolated point or a surface.
\end{proof}

\begin{prop}\label{prop:H-degenerate}
Let $H\in |L|$ be a general divisor. Then $H$ is Gorenstein log canonical. Moreover, $H$ admits a weakly special test configuration with central fiber isomorphic to the projective cone $C_p(G, 2L|_G)$.
\end{prop}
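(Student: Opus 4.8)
The plan is to exhibit $H$ as the general fibre of the ``deformation to the cone over $G$'', namely the test configuration induced by the valuation $\ord_G$ on the anticanonical section ring of $H$, where $G\in|{-}K_H|$ is the Du Val K3 surface produced by Proposition~\ref{prop:2LK3}.

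\emph{Step 1 ($H$ is Gorenstein and normal).} Since $\cO_X(-L)$, $\cO_X(L)$, $\cO_X$ are Cohen--Macaulay by Theorem~\ref{thm:LX-constrain}(1) and $\cO_X(-L)$ is the ideal sheaf of the reduced divisor $H$, the exact sequence $0\to\cO_X(-L)\to\cO_X\to\cO_H\to 0$ and the depth lemma show that $\cO_H$ is Cohen--Macaulay of dimension $3$. The Cohen--Macaulay adjunction formula \cite[Theorem~5.71]{KM98} then gives $\omega_H\cong\cO_X(K_X+H)|_H\cong\cO_X(-2L)|_H$, which is invertible since $2L$ is Cartier (Theorem~\ref{thm:LX-constrain}(4)); hence $H$ is Gorenstein with $-K_H\sim 2L|_H$ ample. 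A Bertini-type argument then gives normality: a general $G\in|2L|_H|$ is the normal (indeed Du Val) K3 surface of Proposition~\ref{prop:2LK3}, so $H$ is regular in codimension one along the ample divisor $G$, hence everywhere, and being $S_2$ it is normal.

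\emph{Step 2 (the test configuration and its central fibre).} Put $R:=\bigoplus_{m\geq 0}H^0(H,-mK_H)=\bigoplus_m H^0(H,2mL|_H)$, so $H=\Proj R$, and let $G\in|{-}K_H|$ be the K3 surface of Proposition~\ref{prop:2LK3}, a prime Cartier divisor on $H$. Filter $R$ by $\cF^pR_m:=\{s\in R_m:\ord_G(s)\geq p\}$; since $G\sim -K_H$ is Cartier one has $\cF^pR_m=H^0(H,-(m-p)K_H)$, so the extended Rees algebra $\cR$ of $\cF^\bullet$ is a finitely generated normal $\bC[t]$-algebra and $\cH:=\Proj\cR\to\bA^1=\Spec\bC[t]$ is a test configuration of $(H,-K_H)$ with $\cH_t\cong H$ and $\cO_{\cH}(1)|_{\cH_t}\cong -K_H$ for $t\neq 0$. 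To identify $\cH_0=\Proj(\gr_{\cF}R)$ I would compute $\gr_{\cF}^pR_m=R_{m-p}/s_GR_{m-p-1}$ (with $s_G$ cutting out $G$); the key input is $H^1(H,2jL|_H)=0$ for all $j\in\bZ$, which I would extract from $H^i(X,mL)=0$ for $1\leq i\leq 3$ and all $m$ (Theorem~\ref{thm:LX-constrain}(3)) by tensoring $0\to\cO_X(-L)\to\cO_X\to\cO_H\to 0$ with the line bundle $\cO_X(2jL)$. This yields $\gr_{\cF}^pR_m\cong H^0(G,(m-p)(2L|_G))$, so $\gr_{\cF}R$ is the homogeneous coordinate ring of the projective cone $C_p(G,2L|_G)$ and $\cH_0\cong C_p(G,2L|_G)$. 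Since $K_G\sim 0$, the cone formula gives $-K_{C_p(G,2L|_G)}\sim_{\bQ}\cO_{C_p}(1)$, so $\cO_{\cH}(1)$ restricts to $-K$ on every fibre; as $\Pic(\bA^1)=0$, the see-saw principle then gives $\cO_{\cH}(1)\sim_{\bQ}-K_{\cH/\bA^1}$.

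\emph{Step 3 (log canonicity and conclusion).} By the standard cone criterion, the projective cone over the Du Val K3 surface $G$ with respect to an ample Cartier divisor is log canonical: blowing up the vertex extracts a divisor isomorphic to $G$ of discrepancy $-1$ (by adjunction, using $K_G\sim 0$), and the Du Val directions contribute only non-negative discrepancies. Hence $\cH_0$ is log canonical; being a Cartier divisor on $\cH$, inversion of adjunction shows $(\cH,\cH_0)$ is log canonical, so $\cH$ is log canonical in a neighbourhood of the \emph{proper} fibre $\cH_0$. By openness of the log canonical locus, $\cH$ is log canonical over a Zariski neighbourhood $U\ni 0$; picking $t_0\in U$ with $t_0\neq 0$, the fibre $\cH_{t_0}\cong H$ is log canonical, so $H$ is log canonical. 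Together with $\cO_{\cH}(1)\sim_{\bQ}-K_{\cH/\bA^1}$, this exhibits $(\cH,\cO_{\cH}(1))$ as a weakly special test configuration of $H$ with central fibre $C_p(G,2L|_G)$. The two steps demanding care are: extracting the vanishings on $H$ from Theorem~\ref{thm:LX-constrain}(3) precisely enough that $\gr_{\cF}R$ is \emph{exactly} the coordinate ring of the cone (and not merely a subring), and upgrading ``$\cH_0$ log canonical'' to ``$H$ log canonical'', which relies on properness of the central fibre rather than on any a priori control of the singularities of $H$.
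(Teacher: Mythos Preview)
Your proof is correct and follows essentially the same approach as the paper: the same filtration $\cF^pR_m=H^0(H,2mL|_H-pG)$, the same vanishing $H^1(H,2qL|_H)=0$ extracted from Theorem~\ref{thm:LX-constrain}(3), and the same inversion-of-adjunction argument via the log canonicity of the cone over a Du Val K3. The only cosmetic difference is in how you pass from ``$(\cH,\cH_0)$ is lc near $\cH_0$'' to ``$H$ is lc'': you use properness of $\cH\to\bA^1$ to push the closed non-lc locus down to a closed subset of $\bA^1$ missing $0$, whereas the paper simply notes $\cH\setminus\cH_0\cong H\times(\bA^1\setminus\{0\})$ (implicitly using the $\bG_m$-action: any closed $\bG_m$-invariant subset of $\cH$ disjoint from $\cH_0$ is empty). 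Both arguments are valid and amount to the same thing.
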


\begin{proof}
By the proof of Proposition \ref{prop:2LK3}, we know that $H$ is integral. Since the ideal sheaf of $H$ in $X$ is $\cO_X(-L)$, which is Cohen-Macaulay by Theorem \ref{thm:LX-constrain}(1), we know that $H$ is Cohen-Macaulay as well. Since a general member $G$ of the base point free linear system $|2L|_H|$ is normal, we know that $H$ is $R_1$ hence normal as well. By adjunction, we have that $K_H=(K_X+H)|_H\sim -2L|_H$ is Cartier. Hence $H$ is Gorenstein normal. 

Next we construct the weakly special test configuration. The idea is by degeneration to the normal cone of $G$. Let $R=\oplus_{m=0}^{\infty} R_m := H^0(H, 2mL|_H)$ be the section ring of $(H, 2L|_H)$. Consider the $\bN$-filtration $\cF$ on $R$ (see e.g.\ \cite[Section 2.3]{BX18} for backgrounds) as 
\[
\cF^p R_m := H^0(H, 2mL|_H - pG)\subset H^0(H, 2mL|_H)=R_m\quad \textrm{ if $p\in \bZ_{\geq 0}$}.
\]
For $p\in \bZ_{<0}$ we define $\cF^p R_m=R_m$.
Since $G\sim 2L|_H$, it is clear that $\cF^\bullet R$ is a multiplicative, linearly bounded, finitely generated $\bN$-filtration of $R$. Denote by 
\[
\cH:=\Proj_{\bA^1} \bigoplus_{m=0}^{+\infty}\bigoplus_{p=-\infty}^{+\infty} t^{-p}\cF^{p} R_m,
\]
where $t$ is the parameter of $\bA^1$, and the grading of $\cF^p R_m$ is $m$. Then  $(\cH, \cO_{\cH}(1))\to \bA^1$ is a test configuration of $(H,2L|_H)$. The central fiber $\cH_0$ is given by 
\[
\cH_0= \Proj \bigoplus _{m=0}^{+\infty}\bigoplus_{p=-\infty}^{+\infty} \cF^{p} R_m/\cF^{p+1} R_m.
\]
It is clear that $\cF^{p} R_m/\cF^{p+1} R_m=0$ for $p<0$. 
Hence to show $\cH_0\cong C_p(G, 2L|_G)$ it suffices to show that $\cF^{p} R_m/\cF^{p+1} R_m\cong H^0(G, 2(m-p)L|_G)$ for $p\geq 0$. By tracing the exact sequence
\begin{align*}
0&\to H^0(H, 2mL|_H - (p+1)G)\to H^0(H, 2mL|_H - pG)\to H^0(G, 2(m-p)L|_G)\\ & \to H^1(H, 2mL|_H - (p+1)G) \cong H^1(H, 2(m-p-1)L|_H),
\end{align*}
it suffices to show that $H^1(H, 2qL|_H)=0$ for any $q\in \bZ$. This follows from the following exact sequence and Theorem \ref{thm:LX-constrain}(3):
\[
0=H^1(X, 2qL)\to H^1(H, 2qL|_H)\to H^2(X, (2q-1)L)=0.
\]
Hence we have shown $\cH_0\cong C_p(G, 2L|_G)$. Since $G$ is a K3 surface with canonical singularities, we know that $C_p(G, 2L|_G)$ is log canonical by \cite[Lemma 3.1]{Kol13}. By inversion of adjunction, we know that $(\cH, \cH_0)$ is log canonical near $\cH_0$, which implies that $(\cH, \cH_0)$ is log canonical as $\cH\setminus\cH_0\cong H\times(\bA^1\setminus\{0\})$.  Hence $H$ is log canonical, and $\cH$ is a weakly special test configuration of $H$. The proof is finished.
\end{proof}

Next, we divide the argument into cases based on the geometry of the polarized K3 surface $(G,L|_G)$ where $G$ is a general complete intersection of $D\in |2L|$ and $H\in |L|$. By Mayer's theorem \cite{May72} (see also \cite[Remark 3.8 and Corollary 3.15]{huybrechts}), there are three cases based on the behavior of the linear system $|L|_G|$.
\begin{enumerate}
    \item (\textit{unigonal}) $|L|_G|$ has a base curve $C_0\cong \bP^1$, $|L|_G - C_0|$ is base point free, and $\phi_{|L|_G - C_0|}: G\to \bP^4$ is an elliptic fibration over a quartic rational normal curve.
    \item (\textit{hyperelliptic}) $|L|_G|$ is base point free, and $\phi_{|L|_G|}: G\rightarrow \bP^4$ is a double cover onto a non-degenerate rational surface of degree $3$ in $\bP^4$.
    \item (\textit{complete intersection}) $|L|_G|$ is very ample, and $\phi_{|L|_G|}: G\hookrightarrow \bP^4$ embeds $G$ as a $(2,3)$-complete intersection in $\bP^4$.
\end{enumerate}

The next result shows that $(G,L|_G)$ cannot be unigonal.

\begin{prop}\label{prop:K3-unigonal}
The polarized K3 surface $(G,L|_G)$ is not unigonal. In particular, $\Bs|L|$ is a finite set.
\end{prop}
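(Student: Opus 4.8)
\emph{Proof proposal.} The plan is to assume $(G,L|_G)$ is unigonal and derive $\alpha(X)<\tfrac15$, which contradicts Theorem \ref{thm:alpha-lowerbound} since $X$ is K-semistable.

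First I would pin down the rational map $\phi:=\phi_{|L|}\colon X\dashrightarrow\bP^5$. From Theorem \ref{thm:LX-constrain}(3) and the exact sequences $0\to\cO_X\to\cO_X(L)\to\cO_H(L|_H)\to0$ and $0\to\cO_H(-L|_H)\to\cO_H(L|_H)\to\cO_G(L|_G)\to0$ one gets $h^0(H,L|_H)=h^0(G,L|_G)=5$, so the surjection $H^0(H,L|_H)\twoheadrightarrow H^0(G,L|_G)$ from the proof of Proposition \ref{prop:2LK3} is an isomorphism and, writing $|L|_H|=\widetilde{W}+|M_H|$ with $\widetilde{W}$ the fixed surface and $|M_H|$ base point free (Mayer's theorem), $\phi|_H$ equals $\phi_{|M_H|}$ up to a linear embedding $\bP^4\hookrightarrow\bP^5$. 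Since Mayer's theorem gives $\phi_{|L|_G|}(G)=R_4$, a rational normal quartic curve spanning $\bP^4$, and since $G=D\cap H$ moves in the base-point-free system $|2L|_H|$ which is \emph{not} composed with $\phi_{|M_H|}$ (as $\widetilde W$ is not vertical for $\phi_{|M_H|}$), the image $\phi_{|M_H|}(H)$ cannot be a surface — otherwise a general $G\in|2L|_H|$ would have $2$-dimensional image. Hence $\phi(H)=\phi(G)=R_4$; as $H$ is a general member of $|L|$ this means $Y:=\overline{\phi(X)}\subset\bP^5$ has general hyperplane section $R_4$, so $Y$ is a nondegenerate surface of minimal degree $4$: a rational normal scroll $S(1,3)$, $S(2,2)$ or the cone $S(0,4)$, or the Veronese surface $v_2(\bP^2)$.

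Because $|L|$ has no fixed component (Proposition \ref{prop:2LK3}), passing to a resolution of $\phi$ and pushing forward gives $L\sim_{\bQ}\phi^{*}\cO_Y(1)$ as Weil $\bQ$-divisor classes. In each of the four cases $\cO_Y(1)\sim_{\bQ}aC+bR$, where $R$ is a member of the covering family of rational curves on $Y$ (a ruling line for the scrolls and the cone, a conic for the Veronese) and $b\geq 2$: indeed $b=3$ for $S(1,3)$, $b=2$ for $S(2,2)$, $b=4$ for $S(0,4)$, and $\cO_Y(1)\sim 2\ell$ on $v_2(\bP^2)$. Pulling back, $-K_X\sim_{\bQ}3L\sim_{\bQ}3a\,\phi^{*}C+3b\,\phi^{*}R$, and $\phi^{*}R$ is effective with a prime component $T$ of coefficient at least $3b\geq 6$. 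Since a general point of the divisor $T$ is a smooth point of $X$, the pair is not log canonical past that coefficient, so $\alpha(X)\leq \tfrac1{3b}\leq\tfrac16<\tfrac15$, contradicting Theorem \ref{thm:alpha-lowerbound}. Therefore $(G,L|_G)$ is not unigonal.

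For the ``in particular'' clause: by the trichotomy in Mayer's theorem, not being unigonal forces $|L|_G|$ to be base point free, so $\Bs|L|\cap G=\emptyset$. By Proposition \ref{prop:2LK3} every connected component of $\Bs|L|$ is an isolated point or a surface, and a surface component $W$ lies in every member of $|L|$, so $W\subseteq H$ and $W\cap G=W\cap D$ is a curve contained in $\Bs|L|_G|=\emptyset$, a contradiction; hence $\Bs|L|$ is a finite set. The step I expect to be the main obstacle is the geometric identification in the second paragraph — rigorously excluding a $2$-dimensional image for $\phi_{|M_H|}$ and establishing $L\sim_{\bQ}\phi^{*}\cO_Y(1)$ — together with the (elementary but case-dependent) description of $\cO_Y(1)$ on the possibly singular surfaces of minimal degree.
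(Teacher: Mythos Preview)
Your overall strategy matches the paper's closely: both show that in the unigonal case the image $W=\overline{\phi_{|L|}(X)}$ is a surface of minimal degree $4$ in $\bP^5$, then use the existence of non-reduced hyperplane sections of $W$ (your decomposition $\cO_Y(1)\sim aC+bR$ with $b\geq 2$ is exactly this observation) to produce a non-reduced member of $|L|$, forcing $\alpha(X)\leq\tfrac16$. The paper's list of minimal degree surfaces ($\bP(1,1,4)$, $\bF_{2,1}$, $\bF_{0,2}$, Veronese) agrees with yours, and the ``in particular'' clause is handled the same way.

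The gap you anticipated---ruling out a $2$-dimensional image for $\phi|_H$---is real in your write-up. Two issues: (i) Mayer's theorem concerns K3 surfaces and says nothing about the threefold $H$, so you cannot invoke it to conclude $|M_H|$ is base point free; one only knows the movable part $\Lambda_H$ has no fixed component. (ii) The assertion ``$\widetilde W$ is not vertical for $\phi_{|M_H|}$'' is unjustified. It is in fact true---if $\widetilde W$ were vertical then $L|_H\cdot F=(\widetilde W+M_H)\cdot F=0$ for a general fiber $F$, contradicting ampleness of $L|_H$---and with this your implication chain can be completed, though the passage from ``$|2L|_H|$ not composed'' to ``general $G$ dominates the image'' still requires care since $\phi|_H$ is only a rational map.

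The paper's argument for this step is shorter and sidesteps these issues. Write $|L|_H|=E+\Lambda_H$ with $E$ the fixed part. If $\dim\phi_{\Lambda_H}(H)\geq 2$, Bertini's theorem makes a general $F\in\Lambda_H$ an \emph{integral} surface; since $G\sim 2L|_H$ is ample on $H$, the intersection $F\cap G$ must be connected. On the other hand, from $H^0(H,L|_H)\xrightarrow{\sim} H^0(G,L|_G)$ one sees $E|_G=C_0$, so $F|_G$ is a general member of $|L|_G-C_0|=|4C_1|$, namely four disjoint elliptic fibers---disconnected. This Bertini/connectedness contradiction is the missing idea you were looking for.
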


\begin{proof}
Assume to the contrary that $(G,L|_G)$ is unigonal of degree $6$. Then we know that $L|_G\sim 4C_1 +C_0$ where $C_0\cong\bP^1$ is a $(-2)$-curve, and $C_1$ is a general fiber of the elliptic fibration $G\to \bP^1$ induced by $|L|_G -C_0|$. In the proof of Proposition \ref{prop:2LK3} we have shown that  $H^0(X,L)\to H^0(H, L|_H)\to H^0(G, L|_G)$ are both surjective.  Indeed, by the following exact sequence
\[
0=H^0(H,-L|_H) \to H^0(H, L|_H)\to H^0(G, L|_G)\to H^1(H, \cO_H)=0,
\]
we know that $H^0(H, L|_H)\cong H^0(G, L|_G)$.


Next, we resolve the birational map $\phi_{|L|}:X\dashrightarrow \bP^5$ as follows:
\begin{center}
\begin{tikzcd}
& X' \arrow[ld,swap,"\pi"] \arrow[rd,"\rho"] &  \\
X \arrow[rr, dashed,"\phi_{|L|}"] && \bP^5\\
\end{tikzcd}
\end{center}
Here $X'$ is the normalization of the graph of $\phi_{|L|}$. From the above discussion, we know that for a general divisor $H\in |L|$, the image $\rho(\pi_*^{-1} H)$ is a general hyperplane section of $W:=\rho(X')$. From the above surjectivity between $H^0$'s, we know that the restrictions of $\phi_{|L|}$ to $H$ and $G$ are $\phi_{|L|_H|}$ and $\phi_{|L|_G|}$ respectively. We claim that $\rho(\pi_*^{-1} H)$ is a curve.

Assume to the contrary that $\dim(\rho(\pi_*^{-1} H))\geq 2$. Denote by $|L|_H|=E+\Lambda_H$ where $E$ is the base component and $\Lambda_H$ is movable. Then it is clear that $\rho(\pi_*^{-1} H)=\phi_{\Lambda_H}(H)$. Since $\dim(\phi_{\Lambda_H}(H))\geq 2$, Bertini's theorem implies that a general member $F\in \Lambda_H$ is an integral surface. Since $G$ is ample on $H$, we know that $F\cap G$ is connected.  Since $\Bs|L|_G|=C_0$ and $G$ is a general member of the base point free linear system $|2L|_H|$, we know that $E|_G=C_0$. Hence $F|_G$ is a general member of $|L|_G-C_0|$ which is the sum of four distinct elliptic fibers. In particular, $F\cap G$ is disconnected. This is a contradiction. Thus the claim is proved.

Since $\rho(\pi_*^{-1} H)$ is a curve, it is the same as $\phi_{|L|_G|}(G)$ which is a rational normal curve of degree $4$ in $\bP^4$. Since $\rho(\pi_*^{-1} H)$ is a hyperplane section of $W$, we know that $W\subset\bP^5$ is a non-degenerate surface of degree $4$. By the classification of minimal degree varieties (see e.g.\ \cite{EH87}), we know that $W$ is isomorphic to either $\bP(1,1,4)$ (cone over a quartic rational normal curve, $\phi_{|\cO(4)|}:\bP(1,1,4)\hookrightarrow\bP^5$), $\bF_{2,1}$ ($\phi_{|3f+e|}: \bF_2\hookrightarrow\bP^5$ where $f$ and $e$ are a fiber and the negative section respectively), $\bF_{0,2}$ ($\phi_{|\cO(1,2)|} :\bP^1\times\bP^1\hookrightarrow\bP^5 $), or the Veronese surface $V_4$ ($\phi_{|\cO(2)|}:\bP^2\hookrightarrow\bP^5$). In each case, there exists a family $\{\cC_t\}$ of non-reduced divisors in the linear system $|\cO_W(1)|$ that covers $W$. Choose a general divisor $\cC_t$, then we know $\pi_*\rho^* \cC_t$ is a non-reduced divisor in $|L|$. Since $-K_X\sim 3L$, we know that $\alpha(X)\leq \frac{1}{6}$ which implies that $X$ is K-unstable by Theorem \ref{thm:alpha-lowerbound}. This is a contradiction. The conclusion on $\Bs|L|$ follows from the previous discussion since $|L|_G|$ is base point free in the non-unigonal cases.
\end{proof}

Next we treat the hyperelliptic case. 


\begin{prop}\label{prop:K3-hyp}
Assume $(G,L|_G)$ is hyperelliptic. Then $X$ is K-unstable. 
\end{prop}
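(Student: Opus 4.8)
The plan is to rule out the hyperelliptic case by analysing the rational map $\phi:=\phi_{|L|}\colon X\dashrightarrow\bP^5$ directly, using crucially that $L$ is \emph{ample}. First I would observe that, since $(G,L|_G)$ is non-unigonal, $\Bs|L|$ is a finite set by Proposition~\ref{prop:K3-unigonal} and $|L|$ has no fixed component (from the proof of Proposition~\ref{prop:2LK3}); as $\phi$ is defined by a basis of the $6$-dimensional space $H^0(X,L)$ (Theorem~\ref{thm:LX-constrain}(3)), its image $W:=\overline{\phi(X)}\subseteq\bP^5$ is non-degenerate. Since $L$ is ample, $\phi$ contracts no curve on $X\setminus\Bs|L|$, hence is generically finite, so $W$ is a fourfold, i.e.\ a non-degenerate hypersurface, and therefore $\deg W\geq 2$. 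Resolving $\phi$ by $\mu\colon X'\to X$ and writing $\mu^*L=M+E$ with $M$ base point free and $E\geq 0$, monotonicity of the volume gives $\deg(\phi)\cdot\deg W=(M^4)\leq(\mu^*L)^4=(L^4)=3$, where $(L^4)=3$ is Theorem~\ref{thm:LX-constrain}(2); together with $\deg W\geq 2$ this forces $\deg(\phi)=1$, so $\phi$ is birational onto the fourfold $W$ (which is then a quadric or a cubic).

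Next I would restrict to a general complete intersection $G=D\cap H$ with $D\in|2L|$ and $H\in|L|$. As $|2L|$ is base point free (Proposition~\ref{prop:2LK3}) and $\Bs|L|$ is finite, a general such $G$ is disjoint from $\Bs|L|$ and from the indeterminacy locus of $\phi$, and general $G$ sweep out a dense subset of $X$, so $\phi(G)$ meets the open locus on which $\phi$ is an isomorphism in a dense set. Hence $\phi$ restricts near $G$ to a morphism that is birational onto its image. By the surjection $H^0(X,L)\twoheadrightarrow H^0(G,L|_G)$ established in the proof of Proposition~\ref{prop:2LK3}, this restriction is precisely the morphism $\phi_{|L|_G|}$ attached to the complete polarization $L|_G$; thus $\phi_{|L|_G|}$ is birational onto its image. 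But the hyperelliptic assumption says exactly that $\phi_{|L|_G|}$ is a double cover onto a cubic scroll in $\bP^4$ — a contradiction. Since everything here takes place under the standing hypothesis that $X$ is a K-semistable $\bQ$-Gorenstein limit of cubic fourfolds, this contradiction rules out the hyperelliptic case, giving the asserted statement.

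The one step requiring care is the passage from ``$L$ ample'' to ``$\phi$ generically finite'' and then the degree estimate $(M^4)\leq(L^4)$ forcing birationality; once $W$ is known to be a fourfold the rest is formal, combining the cohomological vanishing of Theorem~\ref{thm:LX-constrain}(3) with Mayer's trichotomy for degree-$6$ K3 surfaces. (An approach mimicking Proposition~\ref{prop:K3-unigonal} — identifying $W$ and exhibiting a covering family of non-reduced members of $|\cO_W(1)|$ to bound $\alpha(X)$ — fails here, since a fourfold hypersurface in $\bP^5$ carries no such family; this is why the birational-onto-image argument via the K3 section $G$ is the natural route.)
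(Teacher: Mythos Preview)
Your argument has a genuine gap at the step you yourself flag as ``requiring care'': the implication ``$L$ ample $\Rightarrow$ $\phi_{|L|}$ contracts no curve on $X\setminus\Bs|L|$'' is simply false. Amplitude of $L$ guarantees that $|mL|$ is very ample for $m\gg 0$, but says nothing about whether the complete linear system $|L|$ itself separates points or even defines a generically finite map. Concretely, a curve $C\subset X\setminus\Bs|L|$ is contracted by $\phi_{|L|}$ exactly when the restriction $H^0(X,L)\to H^0(C,L|_C)$ lands in a one-dimensional subspace; this is perfectly compatible with $\deg(L|_C)>0$.

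In fact the paper shows that in the hyperelliptic case one has $\dim W=3$, not $4$: the restriction $\phi_{|L|_D|}\colon D\to W$ is a finite \emph{double} cover for general $D\in|2L|$, so the general fibre of $\phi_{|L|}$ is a curve. Your degree estimate and the contradiction with $\phi_{|L|_G|}$ being $2:1$ are exactly how the paper rules out the case $\dim W=4$ --- so that part of your argument is correct and matches the paper --- but this only eliminates one case and leaves the real work undone. After establishing $\dim W=3$, the paper classifies the degree-$3$ non-degenerate threefold $W\subset\bP^5$ (three possibilities via \cite{EH87}); two of them are covered by non-reduced hyperplane sections, giving $\alpha(X)\leq\frac16$ as in the unigonal case, while the remaining case $W\cong\bP^1\times\bP^2$ is excluded by a Picard-rank argument: the double cover $D\to W$ forces $\rk\Pic(D)\geq 2$, which is shown to be incompatible with $D$ being a $\bQ$-Gorenstein limit of smooth $(2,3)$-complete intersections in $\bP^5$ via \cite[12.2]{KM92}. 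None of this is addressed in your proposal.
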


\begin{proof}
We resolve the rational map $\phi_{|L|}=\rho\circ \pi^{-1}$ by $X\xleftarrow{\pi}X'\xrightarrow{\rho}W$ where $X'$ is the normalization of the graph of $\phi_{|L|}$. Denote by $\pi^*|L|=\frac{1}{2}E+\Lambda$ where $E$ is an effective Weil divisor on $X'$ and $\Lambda$ is base point free. Let $L':= \pi^*L - \frac{1}{2}E$ which is semiample on $X'$. Since $\rho(\pi_*^{-1} H)$ is a hyperplane section of $W$, and $\rho|_{G'}$ is a double cover for general $G$ and $G':=\pi^{-1}(G)$, we know that $\dim(W)\geq 3$. We first show that $\dim(W)=3$.

Assume to the contrary that $\dim(W)=4$, i.e.\ $\rho$ is generically finite. Since $\Bs|L|$ is a finite set, we know that $(\pi^* L\cdot E)=0$ as a $2$-cycle. Hence 
\[
\deg(W)\cdot \deg(\rho)=(L'^4)\leq (L'^3\cdot \pi^* L)=((\pi^* L-\tfrac{1}{2}E)^3\cdot \pi^*L)=((\pi^*L)^4)=3.
\]
Since $W$ is non-degenerate, we have $\deg(W)\geq 2$ which implies that $\deg(\rho)=1$, i.e.\ $\rho$ is birational. Let $H':=\pi_*^{-1}H$ be a general divisor in $\Lambda$. Then we know that $\rho|_{H'}$ is also birational. However, $\rho|_{G'}$ has degree $2$ for a general $G'$ in the base point free linear system $(\pi^*|2L|)|_{H'}$. This is a contradiction. Thus we have $\dim(W)=3$.

Next, we analyze the geometry of $W$. Notice that since $(G,L|_G)\cong (G', (\pi^*L)|_{G'})$ has degree $6$, the image $\rho(G')$ is an integral surface of degree $3$. Since $\dim(W)=3$, we know that $\rho(H')$ is an integral surface for general $H'\in \Lambda$, hence $\rho(H')=\rho(G')$ has degree $3$. This implies that $\deg(W)=3$ as well. Now $W$ is a non-degenerate threefold in $\bP^5$ of degree $3$. By \cite{EH87}, there are three possibilities of $W$: $\bP(1,1,3,3)$ (a second iterated cone over a twisted cubic curve, $\phi_{|\cO(3)|}: \bP(1,1,3,3)\hookrightarrow\bP^5$), the cone over $\bF_{1,1}$ (the cone over the image of $\phi_{|2f+e|}:\bF_1\hookrightarrow \bP^4$), or $\bP^1\times\bP^2$ ($\phi_{|\cO(1,1)|}:\bP^1\times\bP^2\hookrightarrow\bP^5$). In the first two cases, $W$ is covered by non-reduced hyperplane sections, which implies that $|L|$ contains a non-reduced element by similar arguments to the proof of Proposition \ref{prop:K3-unigonal}. This shows $\alpha(X)\leq \frac{1}{6}$ which implies that $X$ is K-unstable by Theorem \ref{thm:alpha-lowerbound}.




The only case left is when $(W,\cO_W(1))\cong (\bP^1\times\bP^2, \cO(1,1))$. We will show that this case cannot occur. Our argument is inspired by \cite[Section 6]{dFH11}.\footnote{This argument is suggested by Chenyang Xu.}
From the proof of Proposition \ref{prop:2LK3}, we see that $\phi_{|L|}$ restricted to $D$ is the finite morphism $\phi_{|L|_D|}:D\to W$ as a double cover. Thus $\phi_{|L|_D|}^*:\Pic(W)\hookrightarrow \Pic (D)$ is an injection, which implies that $\rk\Pic(D)\geq \rk\Pic(W)=2$. Since $X$ admits a $\bQ$-Gorenstein smoothing $f: \cX\to B$ with $\cX_0\cong X$ where $\cX_b$ is a smooth cubic fourfold for any $b\in B^\circ$, we know that $f_*\cO_{\cX}(2\cL)$ is flat by Theorem \ref{thm:LX-constrain}(3). Hence after base change to a quasi-finite holomorphic map $\bD\to B$ from the unit disc $\bD\subset \bC$, we can find a $\bQ$-Gorenstein smoothing $\cD\to \bD$ of $D\cong \cD_0$ such that $\cD_t$ is a smooth $(2,3)$-complete intersection in $\bP^5$ for any $t\in \bD^\circ:=\bD\setminus \{0\}$. By Proposition \ref{prop:2Ldisjoint} we know that $\cD_0$ is a $\bQ$-Fano variety with Gorenstein canonical singularity. Hence by Kawamata-Viehweg vanishing we have $H^i(\cD_0, \cO_{\cD_0})=0$ for any $i>0$. Similarly, since $\cD\to \bD$ is a $\bQ$-Gorenstein flat family of $\bQ$-Fano varieites, we have $H^i(\cD, \cO_{\cD})=0$ for any $i>0$. Hence from the exponential exact sequence, we obtain the following isomorphisms:
\[
\Pic(\cD_0)\xrightarrow{\cong }H^2(\cD_0,\bZ) \xleftarrow{\cong }H^2(\cD,\bZ)\xleftarrow{\cong }\Pic(\cD).
\]
Here the middle isomorphism follows from the topological fact that $\cD_0\hookrightarrow\cD$ admits a deformation retraction. In particular, we know that $\rk\Pic(\cD)=\rk\Pic(\cD_0)\geq 2$. By Lefschetz hyperplane theorem, every fiber $\cD_t$ of the smooth fibration $\cD^\circ \to \bD^\circ$ satisfies that $\Pic(\cD_t)=\bZ\cdot [-K_{\cD_t}]$. Hence \cite[Conditions 12.2.1]{KM92} hold for $\cD\to \bD$. By \cite[12.2.2 -- 12.2.5]{KM92}, we know that there is a $\bQ$-local system $\cG\cN^1(\cD/\bD)$ on $\bD$ satisfying that $\cG\cN^1(\cD/\bD)(\bD)\cong \Pic(\cD)\otimes_{\bZ}\bQ$ and $\cG\cN^1(\cD/\bD)|_t\cong \Pic(\cD_t)\otimes_{\bZ}\bQ$ for a very general $t\in \bD^\circ$. Since $\bD$ is contractible, we know that $\cG\cN^1(\cD/\bD)$ is a trivial $\bQ$-local system, and 
\[
2\leq \rk\Pic(\cD)=\rk \cG\cN^1(\cD/\bD) = \rk\Pic(\cD_t)=1.
\]
This is a contradiction. Thus the proof is finished.
\end{proof}

Finally we treat the case of $G$ being a complete intersection.

\begin{prop}\label{prop:K3-ci}
Suppose $L$ is not Cartier at $x\in X$. 
Assume $(G,L|_G)$ is a $(2,3)$-complete intersection in $\bP^4$. Then the index $1$ cover $\tilde{x}\in \tX$ of $x\in X$ with respect to $L$ is a local complete intersection singularity.
\end{prop}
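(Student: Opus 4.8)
The plan is to reconstruct the birational geometry of $|L|$, deduce that $x$ is a base point of $|L|$, then reduce to a surface slice through $x$ and pass to the index $1$ cover $\tilde x\in\tX$.

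\textbf{Step 1 (geometry and reduction).} Using the surjections $H^0(X,L)\twoheadrightarrow H^0(H,L|_H)\twoheadrightarrow H^0(G,L|_G)$ established in the proof of Proposition \ref{prop:2LK3}, together with the hypothesis that $\phi_{|L|_G|}$ embeds $G$ as a $(2,3)$-complete intersection in $\bP^4$, one sees that $\phi_{|L|_H|}$ maps $H$ birationally onto a cubic threefold in $\bP^4$, and hence $\phi_{|L|}$ maps $X$ birationally onto a cubic fourfold $W\subset\bP^5$. Since $\Bs|L|$ is a finite set by Proposition \ref{prop:K3-unigonal}, if $x\notin\Bs|L|$ then $\phi_{|L|}$ is a morphism near $x$ and $\cO_X(L)\cong\phi_{|L|}^*\cO_W(1)$ is invertible near $x$, contradicting that $L$ is not Cartier at $x$. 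Hence $x$ is an isolated base point of $|L|$.

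\textbf{Step 2 (a surface through $x$).} Pick a general $H\in|L|$ (which passes through $x$) and a general $D\in|2L|$ among those through $x$ (possible since $x\notin\Bs|2L|$ by Proposition \ref{prop:2Ldisjoint}), and set $Y:=H\cap D$. By Bertini away from $x$ and the fact that $x$ is an isolated singular point of $X$ (Proposition \ref{prop:non-Cartier-finite}), $Y$ is normal near $x$ and singular there only at $x$. Since $2L$ is Cartier and $-K_X\sim 3L$, adjunction (first along the Cartier divisor $D$, then along $H|_D$) gives $\omega_Y\cong\cO_Y$ near $x$, and tracing the same exact sequences as in Proposition \ref{prop:2LK3} with Theorem \ref{thm:LX-constrain}(1),(3) gives $\omega_Y\cong\cO_Y$ globally and $h^1(\cO_Y)=0$; thus $Y$ is a projective Gorenstein surface of K3 type. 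Combining Theorem \ref{thm:kawamata}, inversion of adjunction, the fact that $H$ is log canonical (Proposition \ref{prop:H-degenerate}), and the generality of $D$, one checks that $(X,H+D)$ is log canonical near $x$; hence $Y$ has at worst a Gorenstein log canonical — i.e. Du Val, simple elliptic, or cusp — singularity at $x$.

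\textbf{Step 3 (lifting and conclusion).} Let $\widetilde Y:=\tau^{-1}(Y)=\tD\cap\tH$, the complete intersection in $\tX$ of the Cartier divisors $\tD:=\tau^{-1}(D)$ and $\tH:=\tau^{-1}(H)$ (Cartier because $\tau^*L$ is Cartier on $\tX$). Since $\tau$ is étale over $X\setminus\Sigma$, the induced map $\widetilde Y\to Y$ is quasi-étale, so $\widetilde Y$ is normal near $\tilde x$ with $\omega_{\widetilde Y}\cong\cO_{\widetilde Y}$ there, and $\tilde x\in\widetilde Y$ is again Gorenstein log canonical. It cannot be Du Val: otherwise, as $\widetilde Y$ is cut out by two equations in $\tX$, we would get $\edim(\tilde x,\tX)\le\edim(\tilde x,\widetilde Y)+2\le 5$, making $\tilde x\in\tX$ a hypersurface singularity and hence (Theorem \ref{thm:lci}) a singularity for which Conjecture \ref{conj:ODP} holds, contradicting Proposition \ref{prop:odp-violate} — this is exactly the argument used in the proof of Proposition \ref{prop:2Ldisjoint}. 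So $\tilde x\in\widetilde Y$ is a simple elliptic or cusp singularity, whose embedding dimension equals $\max(3,d)$ for its degree $d$. Bounding $d\le 4$ — via an intersection-theoretic/Riemann--Roch estimate on the minimal resolution of $\widetilde Y$, using the nef and big class $\tau^*L|_{\widetilde Y}$ of square $6$ that contracts the fundamental cycle at $\tilde x$ — gives $\edim(\tilde x,\widetilde Y)\le 4$, whence $\edim(\tilde x,\tX)\le 6$. Since a Gorenstein singularity of embedding codimension at most $2$ is a complete intersection, $\tilde x\in\tX$ is a local complete intersection singularity.

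The step I expect to be the main obstacle is the embedding-dimension bound in Step 3 — equivalently, controlling the degree of the a priori possibly large-degree simple elliptic or cusp singularity appearing on $\widetilde Y$ — and, more mildly, verifying log-canonicity of $(X,H+D)$ through the base point $x$ in Step 2. If these are not handled as above, an alternative is to bound $\edim(\tilde x,\tX)\le 6$ directly from the $\cO_{X,x}$-algebra structure $\widetilde{\cO}_{\tX,\tilde x}\cong\cO_{X,x}\oplus\cO_X(L)_x$, using the splitting $\widetilde{\fm}/\widetilde{\fm}^2\cong\fm_{X,x}/(\fm_{X,x}^2+I)\ \oplus\ \cO_X(L)_x/\fm_{X,x}\cO_X(L)_x$, where $I$ is the image of the multiplication $\cO_X(L)_x\otimes\cO_X(L)_x\to\cO_X(2L)_x\cong\cO_{X,x}$, and then invoking the same "Gorenstein of codimension $\le 2$" principle.
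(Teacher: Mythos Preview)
Your approach is quite different from the paper's, and it has a genuine gap at exactly the point you yourself flag.

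The paper never takes a surface slice through $x$. Instead it uses Proposition \ref{prop:H-degenerate}: the general $H\in|L|$ carries a weakly special test configuration $\cH/\bA^1$ with central fiber $C_p(G,2L|_G)$, and the non-Cartier point $x\in H$ specializes under this $\bG_m$-degeneration to the cone vertex $o$. Taking the index $1$ cover of $\cL_{\cH}$ over this family, the central fiber becomes the affine cone $C_a(G,L|_G)$, which is visibly a local complete intersection since $G\subset\bP^4$ is a $(2,3)$-complete intersection and $L|_G\cong\cO_G(1)$. Because being lci is open in a flat family (\cite[Theorem 2.3.4]{CMrings}), the nearby fiber $\tilde x\in\tH$ is lci, and then so is $\tilde x\in\tX$. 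The hypothesis ``$(G,L|_G)$ is a $(2,3)$-complete intersection'' is used \emph{directly} to identify the cone, with no embedding-dimension estimates needed.

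Your Step~3 is where the argument breaks. The degree bound $d\le 4$ is not justified: $\tau$ is only the \emph{local} index $1$ cover, so $\widetilde Y$ is a germ and the phrase ``nef and big class $\tau^*L|_{\widetilde Y}$ of square $6$'' does not make sense as a global intersection number; even if one globalizes via a branched double cover, the self-intersection would be $12$, not $6$; and in any case, knowing only that a nef class has trivial intersection with the exceptional cycle gives no upper bound on $(-E^2)$ via Hodge index or Riemann--Roch. Your fallback, bounding $\edim(\tilde x,\tX)$ directly from the module splitting $\cO_{X,x}\oplus\cO_X(L)_x$, also does not work as stated: controlling $\dim \cO_X(L)_x/\fm_{X,x}\cO_X(L)_x$ and the image of the multiplication map requires exactly the kind of geometric input you are trying to avoid. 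Step~2 is also shaky: forcing $D\in|2L|$ to pass through the base point $x$ takes you outside the range where Theorem \ref{thm:kawamata} or Proposition \ref{prop:2Ldisjoint} apply, and you give no argument that $(X,H+D)$ remains log canonical at $x$; the paper's Remark \ref{rem:zhuang} obtains a statement of this flavor, but only by invoking the higher-codimension $\alpha$-invariant bound $\alpha^{(4)}(X)\ge\frac{4}{5}$ from \cite{Zhu20b}, which is a nontrivial extra ingredient.
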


\begin{proof}
It is clear that $x\in \Bs|L|\subset H$. By Proposition \ref{prop:H-degenerate}, we know that there exists a weakly special test configuration $\cH$ of $H$ with central fiber $\cH_0\cong C_p(G, 2L|_G)$. Denote by $\cL_{\cH}$ the Zariski closure of $L|_{H}\times(\bA^1\setminus \{0\})$ in $\cH$. Then it is clear that $\cL_{\cH}$ is a $\bG_m$-invariant $\bQ$-Cartier Weil divisor on $\cH$.
Let $\cL_0$ be the restriction of $\cL_{\cH}$ on $\cH_0$ which is also a $\bQ$-Cartier Weil divisor. From the construction of $\cH$, we know that the Zariski closure $\cG$ of $G\times(\bA^1\setminus\{0\})$ in $\cH$ is a trivial test configuration of $G$. Moreover, its central fiber $\cG_0$ is precisely the section at infinity in the projective cone $C_p(G,2L|_G)$. Thus we have $\cL_0|_{\cG_0}\cong L|_G$ under the natural identification of $\cG_0\cong G$. Since $\cL_0$ is $\bG_m$-invariant, we know that $\cL_0$ is linearly equivalent to the cone over $L|_G$. In particular, we know that $\ind(o, \cL_0)=\ind(o,\cL_{\cH})=2$ where $o\in \cH_0$ is the cone vertex. Besides, since $L|_{G}$ is Cartier, we know that $o$ is the only non-Cartier point of $\cL_{\cH}$ in $\cH_0$.

From earlier discussions, we know that $L|_H$ is not Cartier at $x$. Thus $\cL_{\cH}$ is not Cartier at $(x,t)$ for any $t\in \bA^1\setminus \{0\}$. This implies that the degeneration of $(x,t)$ in $\cH$ as $t\to 0$ is precisely $o$. Let $\tau_{\cH}: (\tilde{o}\in \widetilde{\cH})\to (o\in \cH)$ be the index $1$ cover of $\cL_{\cH}$. Then it is clear that $\tilde{o}\in \widetilde{\cH}_0$ is isomorphic to the affine cone singularity $C_a(G, L|_G)$. Since $G$ is a global complete intersection and $L|_G\cong \cO_G(1)$, we know that $\tilde{o}\in \widetilde{\cH}_0$ is a local complete intersection singularity. We denote by $\tau:(\tilde{x}\in \tX)\to (x\in X)$ the index $1$ cover of $L$. Denote by $\widetilde{H}:=\tau^{-1}(H)$. Then it is clear that $\widetilde{\cH}$ provides a $\bG_m$-equivariant degeneration of $(\tilde{x}\in \tH)$ to $(\tilde{o}\in \widetilde{\cH}_0)$ which is a local complete intersection singularity. By \cite[Theorem 2.3.4]{CMrings}, we know that $(\tilde{x}\in \tH)$ is a local complete intersection singularity. Since $\tH$ is a Cartier divisor of $\tX$, again using \cite[Theorem 2.3.4]{CMrings} we conclude that $\tilde{x}\in \tX$ is also a local complete intersection. The proof is finished.
\end{proof}

\begin{rem}\label{rem:zhuang} (\textit{Communicated with Ziquan Zhuang.}) There is an alternative way to prove Propositions \ref{prop:K3-hyp} and \ref{prop:K3-ci} using higher codimensional $\alpha$-invariants. Since $X$ is K-semistable and $\Bs|L|$  is finite, by \cite[Theorem 1.1]{Zhu20b} we know that $\alpha^{(4)}(X)\geq \frac{4}{5}$. Since $-K_X\sim 3L$, we know that   $\lct(X; |L|)\geq \frac{12}{5}$. Hence $(X, H_1+H_2+\frac{2}{5}H_3)$ is log canonical for general members $H_i\in |L|~(1\leq i\leq 3)$. Suppose $x\in \Sigma$ is a non-Cartier point of $L$ with the index $1$ cover $\tau:(\tilde{x}\in \tX)\to (x\in X)$. Thus $(\tX, \tH_1+\tH_2+\frac{2}{5}\tH_3)$ is log canonical at $\tilde{x}$ where each $\tH_i:=\tau^* H_i$ is Cartier. Hence by adjunction we know that $(\tH_1\cap \tH_2, \frac{2}{5}\tH_1\cap\tH_2\cap\tH_3)$ is semi-log-canonical (slc). Hence $\tilde{x}\in \tH_1\cap \tH_2$ is a Gorenstein slc surface singularity without an isolated lc center. By the classification of log canonical surface singularities (see e.g.\ \cite[Section 3.3]{Kol13}), we know that $\tilde{x}\in \tH_1\cap \tH_2$ is either Du Val or nodal. In particular, $\tilde{x}\in \tH_1\cap \tH_2$ is a hypersurface singularity which implies that $\tilde{x}\in \tX$ is also a hypersurface singularity. Then similar arguments to the proof of Theorem \ref{thm:L-Cartier} implies that $X$ is K-unstable, a contradiction.
\end{rem}

To summarize, we have shown the following result which implies Theorem \ref{thm:K=GIT}.

\begin{thm}\label{thm:L-Cartier}
Let $(X,L)$ be the K-semistable limit of cubic fourfolds as in Theorem \ref{thm:LX-constrain}. Then $L$ is a very ample Cartier divisor, and $\phi_{|L|}:X\hookrightarrow \bP^5$ embeds $X$ as a (possibly singular) cubic fourfold.
\end{thm}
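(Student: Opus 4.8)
The plan is to reduce the whole statement to the single assertion that $L$ is Cartier, and then to read off the embedding $\phi_{|L|}$ from Fujita's classification of del Pezzo varieties.

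First I would observe that the geometric trichotomy for the degree-$6$ polarized K3 surface $(G,L|_G)$, where $G=D\cap H$ with $D\in|2L|$ and $H\in|L|$ general, has already been resolved by the earlier propositions: Proposition \ref{prop:K3-unigonal} rules out the unigonal case, and Proposition \ref{prop:K3-hyp} rules out the hyperelliptic case, since that would make $X$ K-unstable, contradicting our standing hypothesis that $X$ is K-semistable. Hence $(G,L|_G)$ is necessarily a $(2,3)$-complete intersection in $\bP^4$, and $\Bs|L|$ is finite. It is worth noting that these two propositions are unconditional for the K-semistable limit $X$; no assumption on $L$ enters them.

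Next I would argue by contradiction that $L$ is Cartier. Suppose $L$ is not Cartier at some $x\in X$; by Theorem \ref{thm:LX-constrain}(4) the index-$1$ cover $\tau\colon(\tilde x\in\tX)\to(x\in X)$ of $L$ has degree $\ind(x,L)=2$. By Proposition \ref{prop:K3-ci}, $\tilde x\in\tX$ is then a local complete intersection singularity, so Theorem \ref{thm:lci} applies and gives $\hvol(\tilde x,\tX)\le 2\cdot 3^4$ (the possibility that $\tilde x\in\tX$ is smooth is excluded exactly as in Proposition \ref{prop:odp-violate}, via \cite{Sch71}, since it would make $(x\in X)$ an isolated $\bQ$-Gorenstein smoothable quotient singularity of order $2$). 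On the other hand, Proposition \ref{prop:odp-violate} asserts that $\tilde x\in\tX$ violates Conjecture \ref{conj:ODP}, i.e.\ $\hvol(\tilde x,\tX)>2\cdot 3^4$. These two bounds contradict each other, so $L$ must be Cartier on all of $X$.

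Finally, with $L$ Cartier, $-K_X\sim 3L$ is Cartier and ample, so $X$, being K-semistable and hence klt, is Gorenstein, and therefore has canonical singularities. By Theorem \ref{thm:LX-constrain} one has $(L^4)=3$, $h^0(X,\cO_X(L))=6$, and $h^i(X,\cO_X(L))=0$ for $i>0$; thus $(X,L)$ is a Gorenstein del Pezzo fourfold of degree $3$. I would then invoke Fujita's classification \cite{Fuj90}, which yields that $L$ is very ample and that $\phi_{|L|}\colon X\hookrightarrow\bP^5$ embeds $X$ as a (possibly singular) cubic hypersurface. Alternatively, to sidestep the full classification, one can lift the embedding $\phi_{|L|_G|}\colon G\hookrightarrow\bP^4$ of the $(2,3)$-complete intersection K3 surface up through the general members $H\in|L|$ and $D\in|2L|$, using the surjections on global sections already recorded in Propositions \ref{prop:2Ldisjoint} and \ref{prop:2LK3}. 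The genuine difficulties here have all been packaged into the earlier propositions — excluding the hyperelliptic case (Proposition \ref{prop:K3-hyp}) and handling the complete intersection case by combining Propositions \ref{prop:K3-ci} and \ref{prop:odp-violate} with the local-complete-intersection gap estimate, Theorem \ref{thm:lci} — so the only delicate point remaining for this step is the bookkeeping needed to apply \cite{Fuj90}: the Gorenstein property of $X$ and the precise values of $(L^4)$ and $h^0(X,\cO_X(L))$.
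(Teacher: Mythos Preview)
Your proposal is correct and follows essentially the same approach as the paper's own proof: rule out the unigonal and hyperelliptic cases via Propositions \ref{prop:K3-unigonal} and \ref{prop:K3-hyp}, then in the complete-intersection case combine Proposition \ref{prop:K3-ci} with Theorem \ref{thm:lci} to contradict Proposition \ref{prop:odp-violate}, and finally invoke \cite{Fuj90}. The only difference is cosmetic ordering---you first establish that $(G,L|_G)$ is a $(2,3)$-complete intersection unconditionally and then assume $L$ is not Cartier, whereas the paper assumes $L$ is not Cartier first and treats all three cases inside the contradiction argument; the logic is identical.
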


\begin{proof}
Assume to the contrary that $L$ is not Cartier at $x\in X$. Then from the above discussions, we see that $(G, L|_G)$ is a polarized K3 surface with Du Val singularities of degree $6$. If  $(G, L|_G)$ is hyperelliptic or unigonal, then Propositions \ref{prop:K3-unigonal} and \ref{prop:K3-hyp} imply that $X$ is K-unstable, a contradiction. If $(G,L|_G)$ is a $(2,3)$-complete intersection in $\bP^4$, then Propositions \ref{prop:odp-violate} and \ref{prop:K3-ci} contradict each other since Conjecture \ref{conj:ODP} holds for local complete intersections by Theorem \ref{thm:lci}. Hence $L$ must be Cartier on $X$. The rest of the statement directly follows from \cite{Fuj90}. 
\end{proof}

\begin{proof}[Proof of Theorem \ref{thm:K=GIT}]
The proof is almost the same as \cite[Proof of Theorem 1.1]{LX19}, with the following small modifications. By \cite[Theorem 6.1]{Nad90}, \cite[p 85--87]{Tia00}, \cite{AGP06},  \cite[Theorem 1.5]{LZ20}, and \cite[Corollary 1.4]{Zhu20}, there exists at least one smooth K-stable cubic fourfolds. We also replace \cite[Lemma 3.17]{LX19} by Theorem \ref{thm:L-Cartier}. Then the proof proceeds exactly the same as \cite[Proof of Theorem 1.1]{LX19}.
\end{proof}

\begin{proof}[Proof of Corollary \ref{cor:K-cubic}]
For parts (1) and (2), by \cite[Theorem 1.1]{Laz09} we know that cubic fourfolds with simple singularities are GIT stable. Hence the statements follow from Theorem \ref{thm:K=GIT}. Part (3) follows directly from Theorem \ref{thm:K=GIT}. For part (4), Theorem \ref{thm:K=GIT} implies that any GIT semistable cubic fourfold is K-semistable,  hence it has klt singularities. A hypersurface with klt singularities must be Gorenstein canonical. The existence of (weak) KE metrics in (1)(2)(3) follows from the Yau-Tian-Donaldson Conjecture in the smooth case \cite{CDS15, Tia15} and the general case \cite{BBJ18, LTW19, Li19, LXZ21}. Thus the proof is finished.
\end{proof}

\bibliographystyle{alpha}
\bibliography{ref}

\begin{thebibliography}{ABHLX20}

\bibitem[ABHLX20]{ABHLX19}
Jarod Alper, Harold Blum, Daniel Halpern-Leistner, and Chenyang Xu.
\newblock Reductivity of the automorphism group of {$K$}-polystable {F}ano
  varieties.
\newblock {\em Invent. Math.}, 222(3):995--1032, 2020.

\bibitem[ADL19]{ADL19}
Kenneth Ascher, Kristin DeVleming, and Yuchen Liu.
\newblock {Wall crossing for K-moduli spaces of plane curves}.
\newblock 2019.
\newblock \href{https://arxiv.org/abs/1909.04576}{\textsf{arXiv:1909.04576}}.

\bibitem[ADL20]{ADL20}
Kenneth Ascher, Kristin DeVleming, and Yuchen Liu.
\newblock {K-moduli of curves on a quadric surface and K3 surfaces}.
\newblock {\em J. Inst. Math. Jussieu, to appear}, 2020.
\newblock \href{https://arxiv.org/abs/2006.06816}{\textsf{arXiv:2006.06816}}.

\bibitem[ADL21]{ADL21}
Kenneth Ascher, Kristin DeVleming, and Yuchen Liu.
\newblock {K-stability and birational models of moduli of quartic K3 surfaces}.
\newblock 2021.
\newblock \href{https://arxiv.org/abs/2108.06848}{\textsf{arXiv:2108.06848}}.

\bibitem[AGP06]{AGP06}
Claudio Arezzo, Alessandro Ghigi, and Gian~Pietro Pirola.
\newblock Symmetries, quotients and {K}\"{a}hler-{E}instein metrics.
\newblock {\em J. Reine Angew. Math.}, 591:177--200, 2006.

\bibitem[Amb99]{Amb99}
Florin Ambro.
\newblock Ladders on {F}ano varieties.
\newblock {\em J. Math. Sci. (New York)}, 94(1):1126--1135, 1999.
\newblock Algebraic geometry, 9.

\bibitem[AZ20]{AZ20}
Hamid Abban and Ziquan Zhuang.
\newblock {K-stability of Fano varieties via admissible flags}.
\newblock 2020.
\newblock \href{https://arxiv.org/abs/2003.13788}{\textsf{arXiv:2003.13788}}.

\bibitem[BBJ21]{BBJ18}
Robert Berman, Sebastian Boucksom, and Mattias Jonsson.
\newblock A variational approach to the {Y}au-{T}ian-{D}onaldson conjecture.
\newblock {\em J. Amer. Math. Soc.}, 34(3):605--652, 2021.

\bibitem[Ber16]{Ber16}
Robert~J. Berman.
\newblock K-polystability of {${\Bbb Q}$}-{F}ano varieties admitting
  {K}\"{a}hler-{E}instein metrics.
\newblock {\em Invent. Math.}, 203(3):973--1025, 2016.

\bibitem[BH93]{CMrings}
Winfried Bruns and J\"{u}rgen Herzog.
\newblock {\em Cohen-{M}acaulay rings}, volume~39 of {\em Cambridge Studies in
  Advanced Mathematics}.
\newblock Cambridge University Press, Cambridge, 1993.

\bibitem[BHLLX21]{BHLLX20}
Harold Blum, Daniel Halpern-Leistner, Yuchen Liu, and Chenyang Xu.
\newblock On properness of {K}-moduli spaces and optimal degenerations of
  {F}ano varieties.
\newblock {\em Selecta Math. (N.S.)}, 27(4):Paper No. 73, 2021.

\bibitem[BJ20]{BJ17}
Harold Blum and Mattias Jonsson.
\newblock Thresholds, valuations, and {K}-stability.
\newblock {\em Adv. Math.}, 365:107062, 2020.

\bibitem[BL18]{BL18b}
Harold Blum and Yuchen Liu.
\newblock Openness of uniform {K}-stability in families of $\mathbb{Q}$-{F}ano
  varieties.
\newblock {\em Ann. Sci. \'{E}c. Norm. Sup\'{e}r., to appear}, 2018.
\newblock \href{https://arxiv.org/abs/1808.09070}{\textsf{arXiv:1808.09070}}.

\bibitem[BL21]{BL18a}
Harold Blum and Yuchen Liu.
\newblock The normalized volume of a singularity is lower semicontinuous.
\newblock {\em J. Eur. Math. Soc. (JEMS)}, 23(4):1225--1256, 2021.

\bibitem[Blu18]{Blu18}
Harold Blum.
\newblock Existence of valuations with smallest normalized volume.
\newblock {\em Compos. Math.}, 154(4):820--849, 2018.

\bibitem[BLX19]{BLX19}
Harold Blum, Yuchen Liu, and Chenyang Xu.
\newblock Openness of {K}-semistability for {F}ano varieties.
\newblock {\em Duke Math. J., to appear}, 2019.
\newblock \href{https://arxiv.org/abs/1907.02408}{\textsf{arXiv:1907.02408}}.

\bibitem[BX19]{BX18}
Harold Blum and Chenyang Xu.
\newblock Uniqueness of {K}-polystable degenerations of {F}ano varieties.
\newblock {\em Ann. of Math. (2)}, 190(2):609--656, 2019.

\bibitem[CDS15]{CDS15}
Xiuxiong Chen, Simon Donaldson, and Song Sun.
\newblock K\"{a}hler-{E}instein metrics on {F}ano manifolds. {I}:
  {A}pproximation of metrics with cone singularities, {II}: {L}imits with cone
  angle less than $2\pi$, {III}: {L}imits as cone angle approaches $2\pi$ and
  completion of the main proof.
\newblock {\em J. Amer. Math. Soc.}, 28(1):183--197, 199--234, 235--278, 2015.

\bibitem[Che01]{Che01}
I.~A. Cheltsov.
\newblock Log canonical thresholds on hypersurfaces.
\newblock {\em Mat. Sb.}, 192(8):155--172, 2001.

\bibitem[CP02]{CP02}
I.~A. Cheltsov and Jihun Park.
\newblock Global log-canonical thresholds and generalized {E}ckardt points.
\newblock {\em Mat. Sb.}, 193(5):149--160, 2002.

\bibitem[CP21]{CP18}
Giulio Codogni and Zsolt Patakfalvi.
\newblock Positivity of the {CM} line bundle for families of {K}-stable klt
  {F}ano varieties.
\newblock {\em Invent. Math.}, 223(3):811--894, 2021.

\bibitem[dFEM11]{dFEM11}
Tommaso de~Fernex, Lawrence Ein, and Mircea Musta\c{t}\u{a}.
\newblock Log canonical thresholds on varieties with bounded singularities.
\newblock In {\em Classification of algebraic varieties}, EMS Ser. Congr. Rep.,
  pages 221--257. Eur. Math. Soc., Z\"{u}rich, 2011.

\bibitem[dFH11]{dFH11}
Tommaso de~Fernex and Christopher~D. Hacon.
\newblock Deformations of canonical pairs and {F}ano varieties.
\newblock {\em J. Reine Angew. Math.}, 651:97--126, 2011.

\bibitem[Don02]{Don02}
S.~K. Donaldson.
\newblock Scalar curvature and stability of toric varieties.
\newblock {\em J. Differential Geom.}, 62(2):289--349, 2002.

\bibitem[EH87]{EH87}
David Eisenbud and Joe Harris.
\newblock On varieties of minimal degree (a centennial account).
\newblock In {\em Algebraic geometry, {B}owdoin, 1985 ({B}runswick, {M}aine,
  1985)}, volume~46 of {\em Proc. Sympos. Pure Math.}, pages 3--13. Amer. Math.
  Soc., Providence, RI, 1987.

\bibitem[ELS03]{ELS03}
Lawrence Ein, Robert Lazarsfeld, and Karen~E. Smith.
\newblock Uniform approximation of {A}bhyankar valuation ideals in smooth
  function fields.
\newblock {\em Amer. J. Math.}, 125(2):409--440, 2003.

\bibitem[FO18]{FO16}
Kento Fujita and Yuji Odaka.
\newblock On the {K}-stability of {F}ano varieties and anticanonical divisors.
\newblock {\em Tohoku Math. J. (2)}, 70(4):511--521, 2018.

\bibitem[Fuj90]{Fuj90}
Takao Fujita.
\newblock On singular del {P}ezzo varieties.
\newblock In {\em Algebraic geometry ({L}'{A}quila, 1988)}, volume 1417 of {\em
  Lecture Notes in Math.}, pages 117--128. Springer, Berlin, 1990.

\bibitem[Fuj18]{Fuj18}
Kento Fujita.
\newblock Optimal bounds for the volumes of {K}\"{a}hler-{E}instein {F}ano
  manifolds.
\newblock {\em Amer. J. Math.}, 140(2):391--414, 2018.

\bibitem[Fuj19a]{Fuj19}
Kento Fujita.
\newblock K-stability of {F}ano manifolds with not small alpha invariants.
\newblock {\em J. Inst. Math. Jussieu}, 18(3):519--530, 2019.

\bibitem[Fuj19b]{Fuj16}
Kento Fujita.
\newblock A valuative criterion for uniform {K}-stability of {$\Bbb Q$}-{F}ano
  varieties.
\newblock {\em J. Reine Angew. Math.}, 751:309--338, 2019.

\bibitem[Fuj21]{Fuj17}
Kento Fujita.
\newblock K-stability of log {F}ano hyperplane arrangements.
\newblock {\em J. Algebraic Geom.}, 30(4):603--630, 2021.

\bibitem[GMGS21]{GMGS}
Patricio Gallardo, Jesus Martinez-Garcia, and Cristiano Spotti.
\newblock Applications of the moduli continuity method to log {K}-stable pairs.
\newblock {\em J. Lond. Math. Soc. (2)}, 103(2):729--759, 2021.

\bibitem[Huy16]{huybrechts}
Daniel Huybrechts.
\newblock {\em Lectures on {K}3 surfaces}, volume 158 of {\em Cambridge Studies
  in Advanced Mathematics}.
\newblock Cambridge University Press, Cambridge, 2016.

\bibitem[Jia20]{Jia17}
Chen Jiang.
\newblock Boundedness of {$\Bbb Q$}-{F}ano varieties with degrees and
  alpha-invariants bounded from below.
\newblock {\em Ann. Sci. \'{E}c. Norm. Sup\'{e}r. (4)}, 53(5):1235--1248, 2020.

\bibitem[Kaw00]{Kaw00}
Yujiro Kawamata.
\newblock On effective non-vanishing and base-point-freeness.
\newblock {\em Asian J. Math.}, 4(1):173--181, 2000.
\newblock Kodaira's issue.

\bibitem[KM92]{KM92}
J\'{a}nos Koll\'{a}r and Shigefumi Mori.
\newblock Classification of three-dimensional flips.
\newblock {\em J. Amer. Math. Soc.}, 5(3):533--703, 1992.

\bibitem[KM98]{KM98}
J\'{a}nos Koll\'{a}r and Shigefumi Mori.
\newblock {\em Birational geometry of algebraic varieties}, volume 134 of {\em
  Cambridge Tracts in Mathematics}.
\newblock Cambridge University Press, Cambridge, 1998.
\newblock With the collaboration of C. H. Clemens and A. Corti, Translated from
  the 1998 Japanese original.

\bibitem[Kol13]{Kol13}
J\'{a}nos Koll\'{a}r.
\newblock {\em Singularities of the minimal model program}, volume 200 of {\em
  Cambridge Tracts in Mathematics}.
\newblock Cambridge University Press, Cambridge, 2013.
\newblock With a collaboration of S\'{a}ndor Kov\'{a}cs.

\bibitem[Laz09]{Laz09}
Radu Laza.
\newblock The moduli space of cubic fourfolds.
\newblock {\em J. Algebraic Geom.}, 18(3):511--545, 2009.

\bibitem[Li17]{Li17}
Chi Li.
\newblock K-semistability is equivariant volume minimization.
\newblock {\em Duke Math. J.}, 166(16):3147--3218, 2017.

\bibitem[Li18]{Li18}
Chi Li.
\newblock Minimizing normalized volumes of valuations.
\newblock {\em Math. Z.}, 289(1-2):491--513, 2018.

\bibitem[Li19]{Li19}
Chi Li.
\newblock {$\mathbb{G}$-uniform stability and K\"ahler-Einstein metrics on Fano
  varieties}.
\newblock {\em Invent. Math., to appear}, 2019.
\newblock \href{https://arxiv.org/abs/1907.09399}{\textsf{arXiv:1907.09399}}.

\bibitem[Liu18]{Liu18}
Yuchen Liu.
\newblock The volume of singular {K}\"ahler-{E}instein {F}ano varieties.
\newblock {\em Compos. Math.}, 154(6):1131--1158, 2018.

\bibitem[LL19]{LL16}
Chi Li and Yuchen Liu.
\newblock K\"{a}hler-{E}instein metrics and volume minimization.
\newblock {\em Adv. Math.}, 341:440--492, 2019.

\bibitem[LTW19]{LTW19}
Chi Li, Gang Tian, and Feng Wang.
\newblock {The uniform version of Yau-Tian-Donaldson conjecture for singular
  Fano varieties}.
\newblock {\em Peking Math. J., to appear}, 2019.
\newblock \href{https://arxiv.org/abs/1903.01215}{\textsf{arXiv:1903.01215}}.

\bibitem[LWX19]{LWX19}
Chi Li, Xiaowei Wang, and Chenyang Xu.
\newblock On the proper moduli spaces of smoothable {K}\"{a}hler--{E}instein
  {F}ano varieties.
\newblock {\em Duke Math. J.}, 168(8):1387--1459, 2019.

\bibitem[LWX21]{LWX18}
Chi Li, Xiaowei Wang, and Chenyang Xu.
\newblock Algebraicity of the metric tangent cones and equivariant
  {K}-stability.
\newblock {\em J. Amer. Math. Soc.}, 34(4):1175--1214, 2021.

\bibitem[LX14]{LX14}
Chi Li and Chenyang Xu.
\newblock Special test configuration and {K}-stability of {F}ano varieties.
\newblock {\em Ann. of Math. (2)}, 180(1):197--232, 2014.

\bibitem[LX18]{LX18}
Chi Li and Chenyang Xu.
\newblock Stability of {V}aluations: {H}igher {R}ational {R}ank.
\newblock {\em Peking Math. J.}, 1(1):1--79, 2018.

\bibitem[LX19]{LX19}
Yuchen Liu and Chenyang Xu.
\newblock K-stability of cubic threefolds.
\newblock {\em Duke Math. J.}, 168(11):2029--2073, 2019.

\bibitem[LX20]{LX20}
Chi Li and Chenyang Xu.
\newblock Stability of valuations and {K}oll\'{a}r components.
\newblock {\em J. Eur. Math. Soc. (JEMS)}, 22(8):2573--2627, 2020.

\bibitem[LXZ21]{LXZ21}
Yuchen Liu, Chenyang Xu, and Ziquan Zhuang.
\newblock {Finite generation for valuations computing stability thresholds and
  applications to K-stability}.
\newblock 2021.
\newblock \href{https://arxiv.org/abs/2102.09405}{\textsf{arXiv:2102.09405}}.

\bibitem[LZ20]{LZ20}
Yuchen Liu and Ziwen Zhu.
\newblock {Equivariant K-stability under finite group action}.
\newblock {\em Internat. J. Math., to appear}, 2020.
\newblock \href{https://arxiv.org/abs/2001.10557}{\textsf{arXiv:2001.10557}}.

\bibitem[LZ21]{LZ18}
Yuchen Liu and Ziquan Zhuang.
\newblock Birational superrigidity and {$K$}-stability of singular {F}ano
  complete intersections.
\newblock {\em Int. Math. Res. Not. IMRN}, (1):384--403, 2021.

\bibitem[May72]{May72}
Alan~L. Mayer.
\newblock Families of {$K-3$} surfaces.
\newblock {\em Nagoya Math. J.}, 48:1--17, 1972.

\bibitem[MM93]{MM93}
Toshiki Mabuchi and Shigeru Mukai.
\newblock Stability and {E}instein-{K}\"{a}hler metric of a quartic del {P}ezzo
  surface.
\newblock In {\em Einstein metrics and {Y}ang-{M}ills connections ({S}anda,
  1990)}, volume 145 of {\em Lecture Notes in Pure and Appl. Math.}, pages
  133--160. Dekker, New York, 1993.

\bibitem[Nad90]{Nad90}
Alan~Michael Nadel.
\newblock Multiplier ideal sheaves and {K}\"{a}hler-{E}instein metrics of
  positive scalar curvature.
\newblock {\em Ann. of Math. (2)}, 132(3):549--596, 1990.

\bibitem[Oda15]{Oda15}
Yuji Odaka.
\newblock Compact moduli spaces of {K}\"{a}hler-{E}instein {F}ano varieties.
\newblock {\em Publ. Res. Inst. Math. Sci.}, 51(3):549--565, 2015.

\bibitem[OSS16]{OSS16}
Yuji Odaka, Cristiano Spotti, and Song Sun.
\newblock Compact moduli spaces of del {P}ezzo surfaces and
  {K}\"ahler-{E}instein metrics.
\newblock {\em J. Differential Geom.}, 102(1):127--172, 2016.

\bibitem[PT06]{PT06}
Sean~Timothy Paul and Gang Tian.
\newblock C{M} stability and the generalized {F}utaki invariant {I}.
\newblock 2006.
\newblock
  \href{https://arxiv.org/abs/math/0605278}{\textsf{arXiv:math/0605278}}.

\bibitem[Rob76]{Rob76}
Lorenzo Robbiano.
\newblock Some properties of complete intersections in ``good'' projective
  varieties.
\newblock {\em Nagoya Math. J.}, 61:103--111, 1976.

\bibitem[Sch71]{Sch71}
Michael Schlessinger.
\newblock Rigidity of quotient singularities.
\newblock {\em Invent. Math.}, 14:17--26, 1971.

\bibitem[SS17]{SS17}
Cristiano Spotti and Song Sun.
\newblock Explicit {G}romov-{H}ausdorff compactifications of moduli spaces of
  {K}\"{a}hler-{E}instein {F}ano manifolds.
\newblock {\em Pure Appl. Math. Q.}, 13(3):477--515, 2017.

\bibitem[SSY16]{SSY16}
Cristiano Spotti, Song Sun, and Chengjian Yao.
\newblock Existence and deformations of {K}\"{a}hler-{E}instein metrics on
  smoothable {$\Bbb{Q}$}-{F}ano varieties.
\newblock {\em Duke Math. J.}, 165(16):3043--3083, 2016.

\bibitem[Tia90]{Tia90}
Gang Tian.
\newblock On {C}alabi's conjecture for complex surfaces with positive first
  {C}hern class.
\newblock {\em Invent. Math.}, 101(1):101--172, 1990.

\bibitem[Tia94]{Tia94}
Gang Tian.
\newblock The {$K$}-energy on hypersurfaces and stability.
\newblock {\em Comm. Anal. Geom.}, 2(2):239--265, 1994.

\bibitem[Tia97]{Tia97}
Gang Tian.
\newblock K\"{a}hler-{E}instein metrics with positive scalar curvature.
\newblock {\em Invent. Math.}, 130(1):1--37, 1997.

\bibitem[Tia00]{Tia00}
Gang Tian.
\newblock {\em Canonical metrics in {K}\"{a}hler geometry}.
\newblock Lectures in Mathematics ETH Z\"{u}rich. Birkh\"{a}user Verlag, Basel,
  2000.
\newblock Notes taken by Meike Akveld.

\bibitem[Tia15]{Tia15}
Gang Tian.
\newblock K-stability and {K}\"{a}hler-{E}instein metrics.
\newblock {\em Comm. Pure Appl. Math.}, 68(7):1085--1156, 2015.

\bibitem[Xu20]{Xu19}
Chenyang Xu.
\newblock A minimizing valuation is quasi-monomial.
\newblock {\em Ann. of Math. (2)}, 191(3):1003--1030, 2020.

\bibitem[XZ20]{XZ19}
Chenyang Xu and Ziquan Zhuang.
\newblock On positivity of the {CM} line bundle on {K}-moduli spaces.
\newblock {\em Ann. of Math. (2)}, 192(3):1005--1068, 2020.

\bibitem[XZ21]{XZ20}
Chenyang Xu and Ziquan Zhuang.
\newblock Uniqueness of the minimizer of the normalized volume function.
\newblock {\em Camb. J. Math.}, 9(1):149--176, 2021.

\bibitem[Yok08]{Yok08}
Mutsumi Yokoyama.
\newblock Stability of cubic hypersurfaces of dimension 4.
\newblock In {\em Higher dimensional algebraic varieties and vector bundles},
  RIMS K\^{o}ky\^{u}roku Bessatsu, B9, pages 189--204. Res. Inst. Math. Sci.
  (RIMS), Kyoto, 2008.

\bibitem[Zhu20]{Zhu20b}
Ziwen Zhu.
\newblock Higher codimensional alpha invariants and characterization of
  projective spaces.
\newblock {\em Internat. J. Math.}, 31(2):2050012, 17, 2020.

\bibitem[Zhu21]{Zhu20}
Ziquan Zhuang.
\newblock Optimal destabilizing centers and equivariant {K}-stability.
\newblock {\em Invent. Math.}, 226(1):195--223, 2021.

\end{thebibliography}

\end{document}